\newtheorem{definition}{Definition}[section]
\newtheorem{theorem}[definition]{Theorem}
\newtheorem{corollary}[definition]{Corollary}
\newtheorem{proposition}[definition]{Proposition}
\newtheorem{lemma}[definition]{Lemma}
\newtheorem{remark}[definition]{Remark}
\newtheorem{assumption}{Assumption}
\newcommand{\R}{\mathbb{R}}
\newcommand{\E}{\mathbb{E}}
\newcommand{\N}{\mathbb{N}}
\newcommand{\di}{\mathrm{d}}
\newcommand{\eps}{\varepsilon}
\title[A {PDE} with negative {B}esov drift]{A PDE with  drift of  negative Besov index and linear growth solutions}
\author{Elena Issoglio and Francesco Russo}
\address[Elena Issoglio]{Dipartimento di Matematica `G.\ Peano', Universit\'a di Torino}
\email[Corresponding author]{elena.issoglio@unito.it}
\address[Francesco Russo]{Unit\'e de Math\'{e}matiques appliqu\'{e}es, ENSTA Paris, Institut Polytechnique de Paris}
\email{francesco.russo@ensta-paris.fr}
\date{\today}
\begin{document}

\begin{abstract}
   This paper investigates a class of PDEs with coefficients in negative Besov spaces and whose solutions have linear growth. We show existence and uniqueness of mild and weak solutions, which are equivalent in this setting, and several continuity results. To this aim, we introduce ad-hoc Besov-H\"older type spaces that allow for linear growth, and investigate the action of the heat semigroup on them.  We conclude the paper by introducing a special subclass of these spaces which has the useful property to be separable. 
\end{abstract}

\maketitle
{\bf Key words and phrases.} Parabolic PDEs with linear growth; 
distributional drift; Besov spaces.

{\bf 2020 MSC}.  35C99; 35D99; 35K10.

\section{Introduction}

The objective of this paper is to study  existence, uniqueness and continuity results for solutions to a class of parabolic  PDEs with negative Besov drifts and unbounded solutions. 
In particular, the class of parabolic linear PDEs studied in this work  is of the form 
\begin{equation}\label{eq:parabolicPDE}
\left\{ 
\begin{array}{l}
\partial_t  v + \tfrac12 \Delta v +\nabla v \, b  =  \lambda v + g , \quad \text{on } [0,T]\times \R^d
\\
v(T) = v_T,
\end{array}\right. 
\end{equation}
where  $\lambda$ is a real parameter, and
$b$  and $g$ are continuous functions of time taking values in   a negative Besov space $\mathcal C^{(-\beta)+}$  with $0<\beta<\tfrac12 $, see definitions and details below. Here the product $\nabla v \, b := \nabla v \cdot b $ needs to be defined using pointwise products, because the term $b$ is a distribution. 

 Our main motivation to study \eqref{eq:parabolicPDE} comes from stochastic analysis. 
Indeed, PDEs of the form \eqref{eq:parabolicPDE} naturally arise in the context of stochastic differential equations,  particularly when setting and solving them as {\em martingale problems}. In the companion paper \cite{issoglio_russoMPb} we will extensively use all results on PDE \eqref{eq:parabolicPDE} found in the present paper. 


PDEs with distributional coefficients have been studied in the literature
before, see for example
\cite{cannizzaro,menozzi,issoglio13} to name a few.
Here we do not require the use of Gubinelli's paracontrolled distributions or Hairer's  regularity structures so that 
 the Besov index  of the space where the distributional coefficient $b$ lives cannot be lower than $-\frac12$. 
The main novelty  is
 that we allow terminal conditions, and hence the solutions,
 to have linear growth, which is not the case in the existing literature. 
 
 For this reason in Section \ref{sc:semigroup} we introduce a suitable class of functions spaces, denoted by $D\mathcal C^\alpha$, which contains all functions such that their derivative is an element of $\mathcal C^\alpha$. We also investigate the action of the semigroup on these spaces, in particular Schauder's estimates and Bernstein's inequalities  in the $D\mathcal C^\alpha $ spaces, see Lemmata \ref{lm:PtDC} and \ref{lm:PtDC2}.  
In Section \ref{sc:PDE} We introduce the notion of weak and mild solutions for this PDE and show that they are equivalent in Proposition \ref{pr:mildweak}. 
We then show existence and uniqueness of mild  solutions by fixed point arguments in Theorem \ref{thm:PDElambda0},  using properties of the heat semigroup. Furthermore we show  in Proposition \ref{pr:PDEproperties} some (uniform) bounds on the solution of a special case of the PDE, given by \eqref{eq:PDEu}. 
We also exhibiting several continuity results for the solutions with respect to the functions  $g, b, v_T$, both in the case when the solutions have linear growth and in the case when they are bounded. This is done in Lemmata \ref{lm:continuityv} and \ref{lm:continuityphi}.
In the last section we introduce and study a further class of spaces, which  are used in our companion paper \cite{issoglio_russoMPb} for applications in stochastic analysis,  together with all the results on the PDE. One of the important feature of these spaces is the fact that they are separable, which is not the case for the standard separable Besov-H\"older spaces.

\vspace{5pt}

The paper is organised as follows. In Section \ref{sc:prelim} we introduce the framework in which we work, define some tools like  the pointwise product and state some Assumptions. In Section   \ref{sc:semigroup} we define some new functions spaces that allow linear growth and derive useful properties of how the heat semigroup acts on them. The PDE \eqref{eq:parabolicPDE} is studied in Section  \ref{sc:PDE}.  In Section \ref{sc:separable} we introduce  and study a  class of Besov type spaces which is separable.

\section{Setting and preliminary results}\label{sc:prelim} 
\subsection{Function spaces}
We  use the notation $C^{0,1}:=C^{0,1}([0,T]\times \R^d)$ to indicate the space of functions with gradient in $x$  continuous in $(t,x)$.  By a slight abuse of notation we use the same notation 
$C^{0,1}$  for functions which are $\R^d$-valued. 
When $f:\R^d \to \R^d$ is differentiable, we denote by $\nabla f$ the matrix given by $(\nabla f)_{i,j} = \partial_i f_j$. When $f: \R^d \to \R$ we denote the Hessian matrix of $f$ by Hess$(f)$. Given any function $f$ defined on $[0,T]\times \R^d$ we often  denote  $f(t):= f(t, \cdot)$.

Let $\mathcal S=\mathcal S(\mathbb R^d )$ be the space of Schwartz functions on $\mathbb R^d$ and $\mathcal S'=\mathcal S'(\mathbb R^d )$ the space of Schwartz distributions. 
We denote by $\mathcal F$ and $\mathcal F^{-1}$
the Fourier transform on $\mathcal S$ and inverse Fourier transform respectively, which are extended to $\mathcal S'$ in the standard way.
 For $\gamma\in\mathbb R$ we denote by  $\mathcal C^\gamma = \mathcal C^\gamma(\mathbb R^d)$ the Besov space (or H\"older-Zygmund space) defined as
\begin{equation} \label{eq:Cgamma}
\mathcal C^\gamma := \left\{f \in \mathcal S' : \sup_{j\in\mathbb N} 2^{j\gamma} \|\mathcal F^{-1}(\varphi_j \mathcal F f)\|_\infty <\infty \right\},
\end{equation}
where $(\varphi_j)$ is some partition of unity.
$ \Vert \cdot \Vert_\infty $ denotes the usual sup-norm.
For more details see  for example 
\cite[Section 2.7]{bahouri}.  
Note that for $\gamma'<\gamma$ one has $\mathcal C^\gamma\subset\mathcal C^{\gamma'}$. 
If $\gamma \in \R^+ \setminus \mathbb N$ then the space coincides with the classical H\"older space, namely the space of bounded functions with bounded derivatives  up to order $ \left \lfloor{\gamma}\right \rfloor $ and such that the $ \left \lfloor{\gamma}\right \rfloor $th derivative is $ (\gamma - \left \lfloor{\gamma}\right \rfloor  )$-H\"older continuous. For example if $\gamma\in(0,1)$ the space $\mathcal C^\gamma$  can be equipped with   the classical $\gamma$-H\"older norm 
\begin{equation}\label{eq:gamma01}
\| f\|_\gamma := \|f\|_{\infty} + \sup_{x\neq y, |x-y|<1} \frac{|f(x)-f(y)|}{|x-y|^\gamma},
\end{equation} 
and if $\gamma \in (1,2)$ then  norm is given by 
\begin{equation}\label{eq:gamma12}
\|f\|_{\infty} + \|\nabla f\|_{\infty} +   \sup_{x\neq y, |x-y|<1} \frac{|\nabla f(x)-\nabla f(y)|}{|x-y|^\gamma}.
\end{equation}
We remark that it is equivalent in the previous formulations of the norms to take the supremum over the whole space rather than on $|x-y|<1 $. 
 Note that we use the same notation $\mathcal C^\gamma$ to indicate  $\R$-valued functions but also $\R^d$- or $\R^{d\times d}$-valued  functions. It will be clear from the context which space is needed.
 
 We denote by $C_T \mathcal C^\gamma$ the space of continuous functions on $[0,T]$ taking values in $\mathcal C^\gamma$, that is $C_T \mathcal C^\gamma:= C([0,T]; \mathcal C^\gamma)$.
For any given $\gamma\in \R$ we denote by $\mathcal C^{\gamma+}$ and $\mathcal C^{\gamma-}$  the spaces given by
\[
\mathcal C^{\gamma+}:= \cup_{\alpha >\gamma} \mathcal C^{\alpha} ,  \qquad  
\mathcal C^{\gamma-}:= \cap_{\alpha <\gamma} \mathcal C^{\alpha}.
\]
Note that $\mathcal C^{\gamma+}$ is an inductive space. 
We will also use the spaces $C_T C^{\gamma+}:=C([0,T]; \mathcal C^{\gamma+})$. We remark that $f\in C_T C^{\gamma+} $ if and only if there exists $\alpha>\gamma $ such that $f\in C_T \mathcal C^{\alpha}$, see \cite[Lemma B.2]{issoglio_russoMK}. 
Similarly, we use the space   $C_T \mathcal C^{\gamma-}:=C([0,T]; \mathcal C^{\gamma-})$; in particular we observe that  if $f\in C_T \mathcal C^{\gamma-} $ then for any $\alpha<\gamma $ we have $f\in C_T \mathcal C^{\alpha}$.   
Note that if $f$ is continuous and such that $\nabla f \in C_T \mathcal C^{0+}$ then $f\in C^{0,1}$.


Finally for a general Banach space $(B, \|\cdot \|_B)$ we introduce the family of $\rho$-equivalent norms on $C_T B$, denoted by $\|\cdot \|^{(\rho)}_{C_TB}$ and defined for all $\rho\geq 0$ by $\|f\|^{(\rho)}_{C_TB} = \sup_{t\in[0,T]} e^{-\rho (T-t)}  \|f(t)\|_B $. 
If $\rho = 0$ this is the standard norm in $C_T B$.

\subsection{The heat semigroup in $\mathcal S'$}

Let $(P_t)_t$ denote the semigroup generated by $\tfrac12\Delta$ on $\mathcal S$, in particular for all $\phi\in\mathcal S$ we define $(P_t \phi) (x):= \int_{\mathbb R^d} p_t(x-y) \phi(y)\mathrm dy$, where the kernel $p$ is the usual heat kernel $ p_t(x-y) =  \frac{1}{(2\pi t)^{d/2}} \exp\{ -\frac{|x-y|^2}{2t}\} $.  
It is easy to see that $P_t: \mathcal S \to \mathcal S$.
Moreover we can extend it to $\mathcal S'$  by dual pairing (and we denote it with the same notation by simplicity).  
One has $\langle P_t \psi, \phi\rangle = \langle \psi,  P_t \phi\rangle $ for each $\phi\in\mathcal S$ and  $\psi\in\mathcal S'$, using the fact that the kernel is symmetric.

Next we state and prove a joint continuity result for the heat semigroup acting on $\mathcal S'$. To this aim, we first recall some facts about the Schwartz space $\mathcal S'$, which is an inductive space. 
We recall that \cite[Section 7.3]{rudin} says that for any $\varphi\in \mathcal S, f \in \mathcal S'$ there exists a constant $C(f)$ and an integer $N\in \mathbb N$ such that 
\begin{equation}\label{eq:rudin}
|\langle   \varphi , f\rangle | \leq C(f) \sup_{y\in\R^d, |\alpha|\leq N} |D^\alpha \varphi (y)|\, (|y|^2+1)^N.
\end{equation}
From this it follows that the space $\mathcal S'(\R^d)$ can be expressed as the  space  $\mathcal S'(\R^d) = \cup_{N\in \N} E_N^*$ equipped with the inductive topology, where $E_N$ is the space of smooth functions $\varphi:\R^d \to \R$ such that 
\[
\|\varphi\|_N := \sup_{y\in \R^d, |\alpha|\leq N}  | D^\alpha \varphi(y) | \,  (|y|^2+1)^N < \infty. 
\]

\begin{lemma}\label{lm:rudin_t}
Let $f\in C_T \mathcal S'(\R^d)$.   Then there exists $N\in \mathbb N$ and a constant $C(f)$ independent of time such that 
\begin{equation*}\label{eq:rudin_t}
\sup_{t\in[0,T]} |\langle f(t), \varphi \rangle | \leq C(f)  \sup_{y\in\R^d, |\alpha|\leq N} |D^\alpha \varphi (y)|\, (|y|^2+1)^N.
\end{equation*}
for all $\varphi\in \mathcal S(\R^d)$.
In particular there exists $N$ such that $f\in C_T E^*_N$.
\end{lemma}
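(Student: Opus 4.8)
The plan is to upgrade the pointwise-in-time bound \eqref{eq:rudin} to one that is uniform in $t\in[0,T]$ by means of the uniform boundedness principle. First I would fix $\varphi\in\mathcal S$ and note that, since $f\in C_T\mathcal S'$ and the evaluation $\psi\mapsto\langle\psi,\varphi\rangle$ is continuous on $\mathcal S'$, the scalar function $t\mapsto\langle f(t),\varphi\rangle$ is continuous on the compact interval $[0,T]$ and hence bounded. Thus, for each $\varphi$, the set $\{\langle f(t),\varphi\rangle : t\in[0,T]\}$ is bounded, i.e.\ the family $\{f(t):t\in[0,T]\}\subset\mathcal S'$ is pointwise bounded on $\mathcal S$.

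Next, because $\mathcal S$ is a Fr\'echet space, and therefore barrelled, I would apply the Banach--Steinhaus theorem: a pointwise bounded family of continuous linear functionals on a barrelled space is equicontinuous. Equicontinuity at the origin provides a continuous seminorm $p$ on $\mathcal S$ and a constant $C$ with $|\langle f(t),\varphi\rangle|\le C\,p(\varphi)$ for every $t\in[0,T]$ and every $\varphi\in\mathcal S$. Since the topology of $\mathcal S$ is generated by the increasing family of norms $\|\cdot\|_N$, the seminorm $p$ is dominated by $C'\|\cdot\|_N$ for some $N\in\N$, and substituting yields the asserted inequality with $C(f):=CC'$, where crucially both $N$ and $C(f)$ are now independent of $t$. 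This uniformisation is the main point of the lemma, since Rudin's estimate \eqref{eq:rudin} a priori allows both the order $N$ and the constant to vary with $t$.

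The displayed inequality then says exactly that each $f(t)$ is a bounded linear functional on $E_N$ of norm at most $C(f)$, so $f(t)\in E_N^*$ with $\sup_{t\in[0,T]}\|f(t)\|_{E_N^*}\le C(f)$; this gives the membership together with the uniform bound. To obtain the continuity statement $f\in C_T E_N^*$ I would use the strict inductive-limit structure $\mathcal S'=\cup_{N\in\N} E_N^*$: the inductive topology restricted to a single step $E_N^*$ agrees with its Banach-space norm topology, so continuity of $f$ into $\mathcal S'$ with values in $E_N^*$ already yields norm continuity into $E_N^*$. I expect this last upgrade, from weak to norm continuity within a single Banach step, to be the only delicate point; should one prefer to avoid the structural fact, it can be circumvented by enlarging $N$ and using that the embedding $E_{N+1}\hookrightarrow E_N$ is compact (by Arzel\`a--Ascoli, exploiting the gain of one derivative and of polynomial decay), whence the bounded curve $t\mapsto f(t)$, continuous when tested against $\mathcal S$, becomes norm continuous in the larger space $E_{N+1}^*$.
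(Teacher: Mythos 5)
Your route to the displayed estimate is correct but genuinely different from the paper's. The paper argues via compactness: since $f$ is continuous on the compact interval $[0,T]$, the image $(f(t))_{t\in[0,T]}$ is compact in $\mathcal S'$, and the structural fact cited from the companion paper (Remark B.1 of \cite{issoglio_russoMK}, reflecting that $\mathcal S'=\cup_N E_N^*$ is an inductive limit with compact linking maps) localises this compact set inside a single step: it is contained, and compact, hence bounded, in some $E_N^*$. This yields the uniform constant and the membership $f\in C_T E_N^*$ in one stroke. You instead invoke Banach--Steinhaus on the barrelled Fr\'echet space $\mathcal S$: pointwise boundedness of $\{f(t):t\in[0,T]\}$ (from continuity of $t\mapsto\langle f(t),\varphi\rangle$ on a compact interval) gives equicontinuity, hence domination by a single norm $C'\|\cdot\|_N$ from the increasing generating family. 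For the estimate itself this is more elementary and self-contained --- it avoids the inductive-limit localisation entirely --- at the price of having to handle the continuity statement $f\in C_T E_N^*$ by a separate argument.

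That separate argument is where your primary justification fails: $\mathcal S'=\cup_N E_N^*$ is \emph{not} a strict inductive limit. The linking injections $E_N^*\hookrightarrow E_{N+1}^*$ are norm-decreasing and compact, but they are not topological embeddings, and the topology induced on a step $E_N^*$ by $\mathcal S'$ is strictly coarser than its norm topology: a sequence bounded in $E_N^*$ can converge in $\mathcal S'$ (equivalently, in $E_M^*$-norm for some larger $M$) without converging in $E_N^*$-norm. So continuity into $\mathcal S'$ with values in $E_N^*$ does not by itself give norm continuity into $E_N^*$. Your fallback, however, is sound, and it is in substance exactly the mechanism the paper's citation encapsulates: the dual embedding $E_N^*\hookrightarrow E_{N+1}^*$ is compact (Schauder, from Arzel\`a--Ascoli applied to the primal inclusion, using the gain of one derivative and one power of polynomial decay), so the bounded curve lies in a norm-compact subset of $E_{N+1}^*$; on a norm-compact set any coarser Hausdorff topology --- here, that of testing against $\mathcal S$ --- coincides with the norm topology, and norm continuity into $E_{N+1}^*$ follows. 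One small caveat: for pairings against $\mathcal S$ to separate points of $E_{N+1}^*$ you need $\mathcal S$ to be dense in $E_{N+1}$, which fails for the space of \emph{all} smooth functions of finite $\|\cdot\|_{N+1}$-norm (consider $\varphi(y)=(1+|y|^2)^{-(N+1)}$); this is harmless, since the identification $\mathcal S'=\cup_N E_N^*$ already presupposes reading $E_N^*$ as the dual of the closure of $\mathcal S$ in $\|\cdot\|_N$, and with that reading your fallback argument closes the proof. In short: replace the strictness claim by your own compact-embedding alternative and the proof is complete.
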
 

\begin{proof}
Since $t\mapsto f(t) $ is continuous in $\mathcal S'(\R^d)$ then $(f(t))_{t\in[0,T]}$ is a compact in $\mathcal S'(\R^d)$, so there exists $N$ such that $f:[0,T]\to E^*_N$ and such that $(f(t))_{t\in[0,T]} $ is compact in $E^*_N$ by \cite[Remark B.1]{issoglio_russoMK}.  In particular, $(f(t))_{t\in[0,T]}$ is bounded in $E^*_N$, which implies that
\[
\sup_{t\in[0,T]} \| f(t)\|_{E_N \to \R} < C(f) <\infty,
\]
 and thus 
 \[
 \sup_{t\in[0,T]} |\langle f(t), \varphi \rangle |\leq C(f) \|\varphi\|_N = C(f) \sup_{y\in\R^d, |\alpha|\leq N} |D^\alpha \varphi (y)|\, (|y|^2+1)^N
 \]
for any $\varphi\in \mathcal S(\R^d)$.
\end{proof}

\begin{lemma}\label{lm:jointcontinuity}
  Let  $h\in C_T\mathcal S'$. Then the function $P_th(r)$ is jointly continuous in $(t,r)\in [0,T]^2$ with values in $\mathcal S'$.
\end{lemma}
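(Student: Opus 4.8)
The plan is to reduce the statement to a single fixed Banach space and to test against Schwartz functions, exploiting the self-duality $\langle P_t h(r),\varphi\rangle = \langle h(r), P_t\varphi\rangle$ recorded above. First I would apply Lemma \ref{lm:rudin_t} to $h\in C_T\mathcal S'$ to produce one integer $N$ and a constant $C=C(h)$, independent of time, such that $h\in C_T E_N^*$ and $\sup_{r\in[0,T]}\|h(r)\|_{E_N^*}\le C$. Thus $h$ takes values in a fixed bounded subset of the Banach space $E_N^*$ and is norm-continuous there. Since $\mathcal S'$ is a Montel space, on such a bounded set the strong topology coincides with the topology of pointwise convergence of the pairings, so it suffices to fix $\varphi\in\mathcal S$ and prove that $(t,r)\mapsto \langle P_t h(r),\varphi\rangle=\langle h(r), P_t\varphi\rangle$ is continuous on $[0,T]^2$.

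To do this I would fix $(t_0,r_0)$ and write, for $(t,r)$ nearby,
\[
\langle h(r), P_t\varphi\rangle - \langle h(r_0), P_{t_0}\varphi\rangle = \langle h(r),\, P_t\varphi - P_{t_0}\varphi\rangle + \langle h(r)-h(r_0),\, P_{t_0}\varphi\rangle .
\]
For the second term, $P_{t_0}\varphi\in\mathcal S\subset E_N$ is a fixed element, so
\[
|\langle h(r)-h(r_0), P_{t_0}\varphi\rangle| \le \|h(r)-h(r_0)\|_{E_N^*}\,\|P_{t_0}\varphi\|_N \longrightarrow 0 \quad (r\to r_0)
\]
by the $E_N^*$-continuity of $h$ from Step~1. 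For the first term, I would use the \emph{uniform} bound to get $|\langle h(r), P_t\varphi - P_{t_0}\varphi\rangle| \le C\,\|P_t\varphi - P_{t_0}\varphi\|_N$, a bound with no $r$-dependence; hence it tends to $0$ as soon as $t\to t_0$, uniformly in $r$. Combining the two, both terms vanish in the joint limit $(t,r)\to(t_0,r_0)$, which establishes the claimed joint continuity.

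The remaining point, and the only one requiring genuine analysis, is the continuity of $t\mapsto P_t\varphi$ in the seminorm $\|\cdot\|_N$, i.e. $\|P_t\varphi - P_{t_0}\varphi\|_N\to 0$ as $t\to t_0$. I would obtain this from the standard fact that for $\varphi\in\mathcal S$ the map $t\mapsto P_t\varphi$ is continuous from $[0,T]$ into $\mathcal S$; since $\|\cdot\|_N$ is (equivalent to) one of the defining seminorms of $\mathcal S$, continuity in $\mathcal S$ transfers directly to continuity in $\|\cdot\|_N$.

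The main obstacle is precisely verifying this continuity of the heat semigroup on $\mathcal S$, uniformly in $y$ against the polynomial weight $(|y|^2+1)^N$ and including the endpoint $t_0=0$. For $t_0>0$ I would write $P_t\varphi - P_{t_0}\varphi = \int_{t_0}^{t}\tfrac12\Delta P_s\varphi\,\di s$ and control the weighted sup of $\Delta P_s\varphi$ uniformly for $s$ in a neighbourhood of $t_0$, using that $\Delta$ commutes with $P_s$ and that $\Delta\varphi\in\mathcal S$. For $t_0=0$ I would instead invoke the approximate-identity property of the Gaussian kernel $p_t$, the Gaussian decay being more than enough to dominate the polynomial weight. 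I emphasize that one should \emph{not} attempt to prove continuity in the $E_N^*$-norm topology, since the heat semigroup is not strongly continuous on the dual space; working weakly-$*$ on the bounded set supplied by Lemma \ref{lm:rudin_t} is exactly what makes the argument go through.
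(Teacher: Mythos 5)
Your proof is correct, but it takes a genuinely different route from the paper's. The paper conjugates by the Fourier transform, writes $\mathcal F(P_t h(r)) = e^{-t\xi^2/2}\,\mathcal F h(r)$, and proves \emph{norm} convergence of the increments in a single Banach step $E_m^*$ of the inductive limit, via explicit estimates showing that multiplication by the Gaussian (and by its $t$-increments, written through the integral $\int_0^1 \exp(-\tfrac{t_n a+(1-a)t_0}{2}\xi^2)\,\mathrm{d}a$) maps $E_N$ boundedly, with constants uniform in $t\in[0,T]$; its splitting into $I_1(n)+I_2(n)$ is the exact Fourier-side analogue of your decomposition $\langle h(r), P_t\varphi-P_{t_0}\varphi\rangle + \langle h(r)-h(r_0), P_{t_0}\varphi\rangle$. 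You instead stay on the physical side, use the symmetry $\langle P_t h(r),\varphi\rangle=\langle h(r),P_t\varphi\rangle$, the time-uniform bound of Lemma \ref{lm:rudin_t}, strong continuity of $(P_t)$ on $\mathcal S$, and then upgrade weak-$*$ joint continuity to strong continuity via the Montel property of $\mathcal S'$. What each buys: your argument is softer and shorter, needing only the standard equicontinuity of $P_t$ on $\mathcal S$ (checkable via $(1+|x|^2)^N\le c_N(1+|x-y|^2)^N(1+|y|^2)^N$ and Gaussian moments, which also handles your endpoint $t_0=0$ through the integral formula directly, since $\sup_{s\le T}\|P_s\Delta\varphi\|_N<\infty$); the paper's argument is self-contained at the level of hard estimates and yields the quantitatively stronger conclusion that convergence holds in the norm of a fixed $E_m^*$, which your soft argument does not produce directly. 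Two small points to tighten. First, the Montel reduction requires the \emph{image} set $\{P_t h(r):(t,r)\in[0,T]^2\}$, not merely $\{h(r)\}$, to be bounded in $\mathcal S'$; as written you only assert boundedness of $h$ in $E_N^*$. This is immediate from $|\langle P_t h(r),\varphi\rangle|\le C(h)\sup_{t\in[0,T]}\|P_t\varphi\|_N<\infty$ for each $\varphi$ (weak boundedness suffices, $\mathcal S$ being barrelled), using the same uniform-in-$t$ seminorm bounds you need anyway, but it should be said. Second, your closing remark is slightly off as a contrast with the paper: while the semigroup need not be strongly continuous on $E_N^*$ itself, the paper does obtain norm continuity after enlarging to $E_m^*$ with $m\ge N$, i.e.\ in a weaker dual norm, so norm-topology arguments are not actually a dead end here.
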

\begin{proof}
By means of Fourier transform it is enough to prove that
$(r,t) \mapsto \mathcal F(P_th(r))$ is continuous
with values in $ \mathcal S'$.
We can write
\begin{equation}\label{eq:fourier}
\mathcal F(P_th(r)) (\xi ) = [\mathcal F(\exp(i t \cdot  )  ) \mathcal F h(r)] (\xi)= \exp(- \frac t 2 \xi^2   ) \mathcal F h(r) (\xi) .
\end{equation}
Expression \eqref{eq:fourier} has to be understood as an element of $\mathcal S'$.
When $t > 0$  the product of $\xi \mapsto \exp(- \frac t 2 \xi^2   ) \in \mathcal S$ and $\mathcal F h(r) \in \mathcal S' $ belongs to $\mathcal S'$.
In that case
\[
\mathcal F (P_t h (r))(\xi) = \langle( \mathcal F h(r))(\xi) , \exp(\frac{-t\xi^2}2) \rangle  \in \R,
\]
so that \eqref{eq:fourier} is a function.

We now prove that $(t,r) \mapsto \exp(\frac{-t\xi^2}2) \mathcal F h(r)(\xi)$ is continuous with values in $\mathcal S'$.
By Lemma \ref{lm:rudin_t} let $N$ be such that $\mathcal F h \in C_T E_N^*$ and let $(t_n,r_n) \to (t_0, r_0)$. Let $m\geq N$ to be chosen later.
We have (omitting the variable $\xi$  in $\mathcal F h(r)$ for brevity)
\begin{align*}
&\| \exp(- \frac {t_n} 2 \xi^2   ) \mathcal F h(r_n) - \exp(- \frac {t_0} 2 \xi^2   ) \mathcal F h(r_0)   \|_{E_m^*} \\
\leq & \| \exp(- \frac {t_n} 2 \xi^2   )[ \mathcal F h(r_n) - \mathcal F h(r_0)  ] \|_{E_m^*} \\
+&  \| [\exp(- \frac {t_n} 2 \xi^2   ) - \exp(- \frac {t_0} 2 \xi^2   )] \mathcal F h(r_0)   \|_{E_m^*} \\
=:& I_1(n) + I_2(n).
\end{align*}
We know that 
\[
I_1(n) = \sup_{\phi \in \mathcal S, \|\phi\|_{E_m}\leq 1} |\langle  \mathcal F h(r_n) - \mathcal F h(r_0) , \phi \exp(- \frac {t_n} 2 \xi^2   ) \rangle |.
\]
For $\phi \in \mathcal S $ we have
\[
 |\langle  \mathcal F h(r_n) - \mathcal F h(r_0) , \phi \exp(- \frac {t_n} 2 \xi^2   ) \rangle | \leq \|\mathcal F h(r_n) - \mathcal F h(r_0)\|_{E_N^*} \| \phi \exp(- \frac {t_n} 2 \xi^2   )\|_{E_N}
\]
and the first term goes to zero as $n\to \infty$ since $\mathcal F h \in C_T E^*_N$. We prove that 
\begin{equation}\label{eq:phiEm}
 \| \phi \exp(- \frac {t_n} 2 \xi^2   )\|_{E_N} \leq C_1 \|\phi \|_{E_{m_1}},
\end{equation}
for some $m_1\geq N$, where $C_1$ is a constant independent of $n$. Let $\alpha$ be a multi index such that $|\alpha|\leq N$. We have
\[
(1+ |\xi|^2)^{N} D^\alpha (\phi \exp(- \frac {t_n} 2 \xi^2   ))
\]
is a linear combination of 
\[
P(\xi; t_n) D^{\gamma} \phi (\xi)  \exp(- \frac {t_n} 2 \xi^2   )
\]
where $P(\xi;t_n)$ is a polynomial in $\xi $ with coefficient depending on $t_n$  that can be bounded from above by a polynomial in $\xi$ independent of $t_n$ (possibly depending on $T$)  and $|\gamma|\leq N$. It is clear that there exists an integer $m_1$ and a constant $C_1>0$ such that $P(\xi; t_n) \leq C_1(1+|\xi|^2)^{m_1}$. Thus \eqref{eq:phiEm} holds.

Concerning $I_2(n)$ we have 
\[
I_2(n) = \sup_{\phi \in \mathcal S, \|\phi\|_{E_m}\leq 1} |\langle  \mathcal F h(r_0),[ \exp(- \frac {t_n} 2 \xi^2   ) - \exp(- \frac {t_0} 2 \xi^2   )] \phi  \rangle |,
\]
so for  $\phi \in \mathcal S$ we have 
\begin{align*}
&|\langle  \mathcal F h(r_0),[ \exp(- \frac {t_n} 2 \xi^2   ) - \exp(- \frac {t_0} 2 \xi^2   )] \phi  \rangle |\\
 &\leq \| \mathcal F h(r_0) \|_{E^*_N} \frac{t_n-t_0}{2} \| \xi^2 \phi  \int_0^1 \exp(- \frac {t_n a + (1-a)t_0} 2 \xi^2  )\di a   \|_{E_N}.
\end{align*}
Since $ t_n-t_0 \to 0$ and $ \| \mathcal F h(r_0) \|_{E^*_N}$ is finite, it is enough to prove that  
  \begin{equation}\label{eq:phiEm2}
 \| \xi^2 \phi  \int_0^1 \exp(- \frac {t_n a + (1-a)t_0} 2 \xi^2  )\di a   \|_{E_N} \leq C_2 \|\phi \|_{E_{m_2}}
\end{equation}
for some $m_2$, where $C_2 $ is independent of $n$.
Let $\alpha$ be a multi index such that $|\alpha|\leq N$. 
Then
\[
(1+ |\xi|^2)^{N} D^\alpha \left (\xi^2 \phi(\xi) \int_0^1\exp(- \frac {t_n a + (1-a)t_0} 2 \xi^2  )\di a  \right)
\]
is a linear combination of terms of the type
\[
P(\xi; t_n) D^{\gamma} \phi (\xi) \int_0^1 \exp(- \frac {t_n a + (1-a)t_0} 2 \xi^2  )\di a 
\]
where  $P(\xi;t_n)$ is a polynomial in $\xi $ with coefficient depending on $t_n$  that can be bounded from above by a polynomial in $\xi$ independent of $t_n$ (possibly depending on $T$)  and $|\gamma|\leq N$. As above, there exists an integer $m_2$ and a constant $C_2>0$ such that $P(\xi; t_n) \leq C_2(1+|\xi|^2)^{m_2}$.  Thus\eqref{eq:phiEm2} holds.

Finally we conclude that $I_1(n) + I_2(n) \to 0$ as $n\to \infty$ by setting $m= m_1 \vee m_2$  and using the fact that the sequence of seminorms is monotone. 
\end{proof}

\begin{remark}\label{rm:nablaPt}
The semigroup $P_t$ and $\nabla$ commute  in $\mathcal S'$. 

Indeed let $h \in \mathcal S'$.  
We compute the (generalised) gradient of $P_th$, that is, for all $\phi\in\mathcal S$ we have
\begin{align*}
\langle \nabla P_t h , \phi \rangle :&=  - \langle  P_t h ,\text{div} \phi \rangle \\
&  = -\langle h , P_t \text{div} \phi \rangle \\
&  = -\langle h , \text{div}  P_t\phi \rangle \\
&  = \langle \nabla h ,   P_t\phi \rangle \\
&  = \langle P_t \nabla h ,   \phi \rangle.
\end{align*}
\end{remark}

\subsection{Estimates in $C^\gamma$ for the heat semigroup}
In this section, we are interested in the action of the semigroup on elements of Besov spaces $\mathcal C^\gamma$. 
These estimates are known as \emph{Schauder's estimates} (for a proof we refer to \cite[Lemma 2.5]{catellier_chouk}, see also \cite{gubinelli_imkeller_perkowski} for similar results). 
\begin{lemma}[Schauder's estimates]\label{lm:schauder}
Let $f\in \mathcal C^\gamma \subset \mathcal S'$ for some $\gamma \in \mathbb R$. Then for any $\theta\geq 0$ there exists a constant $c$ such that
\begin{equation}\label{eq:Pt}
\|P_t f\|_{\gamma + 2 \theta} \leq c t^{-\theta} \|f\|_\gamma
\end{equation}  
for all $t>0$.

Moreover 
for $f\in \mathcal C^{\gamma + 2\theta  }$ and   for any $\theta\in(0,1)$ we have 
\begin{equation}\label{eq:Pt-I}
\|P_t f-f\|_{\gamma} \leq c t^{\theta} \|f\|_{\gamma+2\theta }.
\end{equation}
\end{lemma}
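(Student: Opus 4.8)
The statement is the standard pair of Schauder estimates for the heat semigroup, and the cleanest route is to work directly with the Littlewood-Paley characterization \eqref{eq:Cgamma} together with Bernstein-type inequalities. The plan is to reduce both \eqref{eq:Pt} and \eqref{eq:Pt-I} to frequency-localized estimates for the blocks $\Delta_j f := \mathcal F^{-1}(\varphi_j \mathcal F f)$, where the Fourier support of $\Delta_j f$ lives in an annulus of size $2^j$. The key analytic fact I would use is that convolution with the heat kernel acts on each block as a Fourier multiplier $e^{-t|\xi|^2/2}$, and on a dyadic annulus this multiplier is comparable, up to smooth rescaling, to the function $\lambda \mapsto e^{-t\lambda}$ evaluated at $\lambda \sim 2^{2j}$.

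\medskip

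\noindent\textbf{Proof of \eqref{eq:Pt}.} First I would write, for each $j$,
\[
2^{j(\gamma+2\theta)}\|\Delta_j P_t f\|_\infty = 2^{2j\theta}\,\bigl(2^{j\gamma}\|\Delta_j P_t f\|_\infty\bigr),
\]
and estimate $\|\Delta_j P_t f\|_\infty$. Since $P_t$ commutes with the Littlewood-Paley projection, $\Delta_j P_t f = P_t \Delta_j f$, and the multiplier $e^{-t|\xi|^2/2}$ restricted to the annulus where $\varphi_j$ is supported obeys, by a rescaling argument and the smoothness of the Gaussian, a bound of the form $\|P_t \Delta_j f\|_\infty \le C e^{-c t 2^{2j}}\|\Delta_j f\|_\infty$ with constants uniform in $j$ (this is the standard heat-kernel smoothing estimate on a dyadic block, obtained by writing $P_t\Delta_j$ as convolution against an $L^1$-normalized rescaled kernel). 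Combining, the $j$-th term is bounded by
\[
2^{2j\theta} e^{-c t 2^{2j}} \bigl(2^{j\gamma}\|\Delta_j f\|_\infty\bigr) \le \Bigl(\sup_{j} 2^{2j\theta} e^{-c t 2^{2j}}\Bigr)\,\|f\|_\gamma .
\]
It then remains to bound the supremum over $j$ of $2^{2j\theta} e^{-ct 2^{2j}}$. Setting $u = t 2^{2j}$, this equals $t^{-\theta}\sup_u u^{\theta} e^{-c u}$, and the last supremum is a finite constant depending only on $\theta$ and $c$. Taking the supremum over $j$ on the left yields $\|P_t f\|_{\gamma+2\theta}\le c\,t^{-\theta}\|f\|_\gamma$, which is \eqref{eq:Pt}.

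\medskip

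\noindent\textbf{Proof of \eqref{eq:Pt-I}.} For the second estimate I would run the same block-by-block scheme applied to $P_t f - f$, whose $j$-th block has multiplier $e^{-t|\xi|^2/2}-1$. On the annulus of scale $2^j$ one uses the elementary inequality $|e^{-s}-1|\le s^{\theta}$ for $s\ge 0$ and $\theta\in(0,1)$, applied with $s \sim t 2^{2j}$, to obtain $\|\Delta_j(P_t f - f)\|_\infty \le C (t 2^{2j})^{\theta}\|\Delta_j f\|_\infty$. Multiplying by $2^{j\gamma}$ and recognizing $2^{j(\gamma+2\theta)}\|\Delta_j f\|_\infty \le \|f\|_{\gamma+2\theta}$ gives the bound $c\, t^{\theta}\|f\|_{\gamma+2\theta}$ uniformly in $j$, hence \eqref{eq:Pt-I} after taking the supremum over $j$.

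\medskip

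\noindent\textbf{Main obstacle.} The only nontrivial point is making precise the uniform-in-$j$ heat-kernel bound on a single dyadic block, i.e. that $\|P_t\Delta_j f\|_\infty \le C e^{-ct2^{2j}}\|\Delta_j f\|_\infty$ with $C,c$ independent of $j$ and $t$. This is where one invokes the Bernstein-type machinery: writing $P_t \Delta_j = (P_t \tilde\varphi_j(D))\Delta_j$ with a slightly enlarged cutoff $\tilde\varphi_j$ equal to $1$ on the support of $\varphi_j$, and controlling the $L^1$-norm of the inverse Fourier transform of $e^{-t|\xi|^2/2}\tilde\varphi_j(\xi)$ by rescaling to the unit annulus. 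Since the result is classical and available in \cite[Lemma 2.5]{catellier_chouk} and \cite{bahouri}, I would either cite it or include the rescaling estimate as a short lemma; everything else is the elementary calculus of $\sup_u u^\theta e^{-cu}$ and the inequality $|e^{-s}-1|\le s^\theta$.
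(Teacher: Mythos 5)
Note first that the paper does not prove this lemma at all: it is imported from the literature, with a pointer to \cite[Lemma 2.5]{catellier_chouk} and to \cite{gubinelli_imkeller_perkowski}. Your Littlewood--Paley argument is precisely the standard proof behind those citations, so you are supplying the proof the paper outsources, and in outline it is the right one; you also correctly isolate the uniform-in-$j$ kernel estimate (the $L^1$ bound on the inverse Fourier transform of $e^{-t|\xi|^2/2}\tilde\varphi_j(\xi)$, by rescaling to the unit annulus) as the only nontrivial analytic input.

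There is, however, one genuine gap in the sketch as written. The block estimate $\|P_t\Delta_j f\|_\infty\le Ce^{-ct2^{2j}}\|\Delta_jf\|_\infty$ with constants uniform in $j$ holds only for the annulus-supported blocks; in the decomposition \eqref{eq:Cgamma} the lowest block is supported in a ball containing the origin, where no Gaussian decay is available (indeed $P_t$ fixes constants). Your computation $\sup_j 2^{2j\theta}e^{-ct2^{2j}}=t^{-\theta}\sup_u u^{\theta}e^{-cu}$ must therefore exclude that block, which is instead handled by the trivial bound $\|P_t\Delta_0 f\|_\infty\le\|\Delta_0 f\|_\infty$; this contributes a term of size $C\|f\|_\gamma$ carrying no factor $t^{-\theta}$, so \eqref{eq:Pt} is obtained only for $t$ in a bounded interval $(0,T]$ with $c=c(T)$. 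This is not pedantry: as literally stated ``for all $t>0$'', \eqref{eq:Pt} with $\theta>0$ is false --- take $f\equiv 1$, for which $\|P_tf\|_{\gamma+2\theta}$ is a fixed positive constant while $t^{-\theta}\|f\|_\gamma\to 0$ as $t\to\infty$ --- and the $T$-dependent version is what the cited references prove and all this paper ever uses. A second, smaller point: in your proof of \eqref{eq:Pt-I}, passing from the pointwise symbol inequality $|e^{-s}-1|\le s^{\theta}$ to the operator bound $\|\Delta_j(P_tf-f)\|_\infty\le C(t2^{2j})^{\theta}\|\Delta_jf\|_\infty$ is not automatic, since a sup-norm bound on a multiplier does not control its $L^\infty\to L^\infty$ norm; you need the same localized-kernel $L^1$ control, now for the symbol $(e^{-t|\xi|^2/2}-1)\tilde\varphi_j(\xi)$ and its derivatives on the annulus. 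A cheaper route, once \eqref{eq:Pt} is established, is to write $P_tf-f=\tfrac12\int_0^t\Delta P_sf\,\di s$, bound $\|\Delta P_sf\|_\gamma\le c\|P_sf\|_{\gamma+2}\le c\,s^{-(1-\theta)}\|f\|_{\gamma+2\theta}$ using Bernstein's inequality \eqref{eq:nabla} and \eqref{eq:Pt} with gain $2(1-\theta)$, and integrate, which yields \eqref{eq:Pt-I} directly since $\theta\in(0,1)$ makes $s^{\theta-1}$ integrable at $0$.
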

Note that from \eqref{eq:Pt-I}, \eqref{eq:Pt} and the semigroup property,
 it readily follows that if $f\in \mathcal C^{\gamma + 2\theta }$ for some $0<\theta<1$, then for $t>s>0$ we have 
\begin{equation}\label{eq:PcontC}
\|P_tf-P_s f\|_{\gamma}\leq c (t-s)^{\theta} \|f\|_{\gamma+ 2\theta }.
\end{equation}
In other words, this means that if $f\in  \mathcal C^{\gamma+2\theta}$ then $P_\cdot f\in C_T \mathcal C^\gamma$ (and in fact it is $\theta$-H\"older continuous in time). 
We also recall that Bernstein's inequalities hold (see \cite[Lemma 2.1]{bahouri} and \cite[Appendix A.1]{gubinelli_imkeller_perkowski}), that is for $\gamma \in \mathbb R$ there exists a constant $c>0$ such that
\begin{equation}\label{eq:nabla}
\|\nabla g\|_{\gamma} \leq c  \|g\|_{\gamma+1},
\end{equation}  
for all $g\in \mathcal C^{1+\gamma}$. 
Using Schauder's and Bernstein's inequalities we can easily obtain a useful estimate on the gradient of the semigroup, as we see below. 
\begin{lemma}\label{lm:nablaP}
Let $\gamma\in \mathbb R$ and $\theta \in (0,1)$. If $g\in \mathcal C^\gamma$ then for all $t>0$ we have $\nabla (P_tg) \in \mathcal C^{\gamma +2\theta -1}$ and
\begin{equation}\label{eq:nablaP}
\|\nabla (P_tg)\|_{\gamma+2\theta-1} \leq c t^{-\theta}  \|g\|_{\gamma}.
\end{equation}  
\end{lemma}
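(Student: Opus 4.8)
The plan is to obtain the estimate by chaining the two results already in hand: Schauder's estimate \eqref{eq:Pt} from Lemma \ref{lm:schauder} and Bernstein's inequality \eqref{eq:nabla}. First I would fix $g\in\mathcal C^\gamma$ and $t>0$, and apply \eqref{eq:Pt} with the given index $\gamma$ and parameter $\theta$ (admissible since $\theta\geq 0$). This gives $P_tg\in\mathcal C^{\gamma+2\theta}$ together with the quantitative bound $\|P_tg\|_{\gamma+2\theta}\leq c\,t^{-\theta}\|g\|_\gamma$, valid for all $t>0$ with $c$ depending only on $\gamma$ and $\theta$.

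The second step is to differentiate. Since $P_tg\in\mathcal C^{\gamma+2\theta}=\mathcal C^{1+(\gamma+2\theta-1)}$, Bernstein's inequality \eqref{eq:nabla}, applied at H\"older index $\gamma+2\theta-1$, is available and yields $\nabla(P_tg)\in\mathcal C^{\gamma+2\theta-1}$ with $\|\nabla(P_tg)\|_{\gamma+2\theta-1}\leq c\,\|P_tg\|_{\gamma+2\theta}$. Combining this with the bound from the first step gives $\|\nabla(P_tg)\|_{\gamma+2\theta-1}\leq c\,t^{-\theta}\|g\|_\gamma$, which is exactly \eqref{eq:nablaP}; here the constant $c$ is independent of $t$, as required.

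An equivalent route, which I would mention for completeness, is to reverse the order of the two operations using the commutation established in Remark \ref{rm:nablaPt}: one first applies Bernstein at index $\gamma-1$ to get $\nabla g\in\mathcal C^{\gamma-1}$ with $\|\nabla g\|_{\gamma-1}\leq c\,\|g\|_\gamma$, then writes $\nabla(P_tg)=P_t(\nabla g)$ and applies Schauder's estimate \eqref{eq:Pt} to $\nabla g$ at index $\gamma-1$, arriving at the same final bound. Either ordering produces the claim with no substantive difficulty: the only point requiring care is the bookkeeping of the regularity indices, namely arranging that Bernstein is invoked at the level $\gamma+2\theta-1$ (respectively $\gamma-1$) so that its hypothesis $P_tg\in\mathcal C^{1+(\gamma+2\theta-1)}$ (respectively $g\in\mathcal C^{1+(\gamma-1)}$) is met. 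I do not expect a genuine obstacle here, since the statement is essentially a direct corollary of the two quoted inequalities; I would simply note that the restriction $\theta\in(0,1)$ is not needed for the estimate itself (it holds for all $\theta\geq 0$) and is retained only for consistency with the later use of the lemma.
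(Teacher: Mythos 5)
Your proof is correct and follows exactly the route the paper intends: the paper states the lemma without a written proof, noting only that it follows ``using Schauder's and Bernstein's inequalities,'' which is precisely your chaining of \eqref{eq:Pt} (giving $\|P_tg\|_{\gamma+2\theta}\leq c\,t^{-\theta}\|g\|_\gamma$) with \eqref{eq:nabla} at index $\gamma+2\theta-1$. Your alternative ordering via the commutation $\nabla(P_tg)=P_t(\nabla g)$ of Remark \ref{rm:nablaPt}, and your observation that the estimate in fact holds for all $\theta\geq 0$, are both accurate.
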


\subsection{Further properties/tools}\label{ssc:pointwise}
The following is an important estimate which allows to define the pointwise product between certain distributions and functions, which is based on Bony's estimates. For details see \cite{bony} or \cite[Section 2.1]{gubinelli_imkeller_perkowski}. Let   $f \in \mathcal C^\alpha$ and $g\in\mathcal C^{-\beta}$ with $\alpha-\beta>0$ and $\alpha,\beta>0$. Then the {`pointwise product'} $ f \, g$ is well-defined as an element of $\mathcal C^{-\beta}$ and  there exists a constant $c>0$ such that 
\begin{equation}\label{eq:bony}
\| f \, g\|_{-\beta} \leq c \| f \|_\alpha \|g\|_{-\beta}.
\end{equation} 
\begin{remark}\label{rm:bonyt}
Using \eqref{eq:bony} it is not difficult to see that  if $f \in C_T \mathcal C^{\alpha}$ and $g \in C_T \mathcal C^{-\beta}$ then the product is also continuous with values in $ \mathcal C^{-\beta}$, and 
\begin{equation}\label{eq:bonyt}
\| f \, g\|_{C_T \mathcal C^{-\beta}} \leq c \| f \|_{C_T \mathcal C^{\alpha}} \|g\|_{ C_T \mathcal C^{-\beta}}.
\end{equation} 
\end{remark}

\section{The spaces $D \mathcal C^\gamma$ and the action of the semigroup}\label{sc:semigroup}

In this section we introduce some other  function spaces that will be central in the analysis of the  PDEs in this paper if we are to have solutions with linear growth. The idea is to have functions with the same regularity as the $\mathcal C^{\gamma}$-spaces locally, that allow linear growth at infinity. On these spaces
we will show how the heat semigroup acts in terms of regularity, both in the  time- and in the space-variable.

For $\gamma\in(0,1)$  we define space $D\mathcal C^\gamma $ as
\begin{align*}\label{eq:S}
  D\mathcal C^\gamma:= \{ & h:  \R^d \to \R \text{ differentiable function s.t. } \nabla h \in \mathcal C^\gamma\}.
\end{align*}
 Note that the following inclusion holds:
$$\mathcal C^{1+\alpha}  \subset D\mathcal C^\alpha.$$
On $ D\mathcal C^\alpha$  we can introduce a topology, induced by the norm
\begin{equation}\label{eq:normS}
\|h\|_{D\mathcal C^\gamma}:=  \left( |h(0)| + \|\nabla h\|_{\gamma} \right).
\end{equation}
If $h\in D\mathcal C^\alpha$ then there exists a constant (which is $h(0)$) and a function $\tilde h \in \mathcal C^\alpha$ (multidimensional) such that $h(x) = h(0) + x\cdot \tilde h$. Indeed, that function $\tilde h$ is given by $\int_0^1 \nabla h (ax) \mathrm da $. 
\begin{lemma}
 $(D\mathcal C^\alpha, \|\cdot \|_{D\mathcal C^\alpha})$ is a Banach space.
 \end{lemma}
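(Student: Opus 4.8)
The plan is to verify that $(D\mathcal C^\alpha, \|\cdot\|_{D\mathcal C^\alpha})$ satisfies the two requirements of a Banach space: that $\|\cdot\|_{D\mathcal C^\alpha}$ is genuinely a norm, and that the space is complete with respect to it. The first part is quick but must be checked: positivity, homogeneity and the triangle inequality all follow from the corresponding properties of $|\cdot|$ on $\R$ and of $\|\cdot\|_\alpha$ on $\mathcal C^\alpha$, since $\nabla$ is linear. The only subtle point is \emph{definiteness}: if $\|h\|_{D\mathcal C^\alpha}=0$ then $h(0)=0$ and $\|\nabla h\|_\alpha=0$, so $\nabla h\equiv 0$; combined with differentiability on the connected set $\R^d$ this forces $h$ to be constant, and $h(0)=0$ then gives $h\equiv0$. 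Here I would invoke the representation stated just before the lemma, namely $h(x)=h(0)+x\cdot\tilde h(x)$ with $\tilde h(x)=\int_0^1\nabla h(ax)\,\di a$, which makes the dependence of $h$ on $h(0)$ and $\nabla h$ completely explicit and reduces definiteness to a triviality.

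The substance of the proof is \textbf{completeness}. Let $(h_n)$ be a Cauchy sequence in $D\mathcal C^\alpha$. By definition of the norm, $(h_n(0))$ is Cauchy in $\R$ and $(\nabla h_n)$ is Cauchy in $\mathcal C^\alpha$. Since $\R$ is complete, $h_n(0)\to c$ for some $c\in\R$; since $\mathcal C^\alpha$ is complete (it is a Besov space, hence Banach), $\nabla h_n\to G$ for some $G\in\mathcal C^\alpha$, where $G$ is $\R^d$-valued. The candidate limit is then obtained by mimicking the representation formula: I would set
\[
h(x):= c + x\cdot \int_0^1 G(ax)\,\di a.
\]
The remaining work is to check that this $h$ is a differentiable function, that $\nabla h = G$, and that $h_n\to h$ in the $D\mathcal C^\alpha$-norm.

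For the convergence $\nabla h=G$, the cleanest route is to pass to the limit in $h_n(x)=h_n(0)+x\cdot\int_0^1\nabla h_n(ax)\,\di a$. Because $\nabla h_n\to G$ in $\mathcal C^\alpha$ and $\mathcal C^\alpha\hookrightarrow L^\infty$ (the $\|\cdot\|_\alpha$-norm dominates $\|\cdot\|_\infty$), the convergence $\nabla h_n\to G$ is uniform on $\R^d$; uniform convergence lets me interchange limit and integral, giving $h_n(x)\to c+x\cdot\int_0^1 G(ax)\,\di a=h(x)$ pointwise. Moreover, uniform convergence of the gradients $\nabla h_n$ to the continuous limit $G$, together with convergence of $h_n$ at the single point $0$, is exactly the hypothesis of the standard theorem on differentiating under uniform convergence; it yields that $h$ is differentiable with $\nabla h=G\in\mathcal C^\alpha$. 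Hence $h\in D\mathcal C^\alpha$. Finally $\|h_n-h\|_{D\mathcal C^\alpha}=|h_n(0)-c|+\|\nabla h_n-G\|_\alpha\to0$ by construction.

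\textbf{The main obstacle} I anticipate is the differentiation-under-limit step: one must be careful to use $\mathcal C^\alpha$-convergence \emph{of the gradients} (not merely of the $h_n$) to obtain uniform convergence of $\nabla h_n$, and then cite the classical real-analysis result that uniform convergence of derivatives plus convergence at one point implies the limit is differentiable with the expected derivative. The representation $h(x)=h(0)+x\cdot\int_0^1\nabla h(ax)\,\di a$ supplied in the excerpt is precisely what sidesteps any delicacy here, since it expresses $h$ explicitly in terms of the two pieces $(h(0),\nabla h)$ that converge individually, turning completeness of $D\mathcal C^\alpha$ into completeness of $\R\times\mathcal C^\alpha$.
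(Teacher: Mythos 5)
Your proof is correct and follows essentially the paper's strategy: decompose the Cauchy sequence into $h_n(0)$ (Cauchy in $\R$) and $\nabla h_n$ (Cauchy in $\mathcal C^\alpha$), and build the candidate limit by the representation $h(x)=c+x\cdot\int_0^1 G(ax)\,\di a$, exactly as the paper does. The one point where you diverge is the identification $\nabla h=G$: you invoke the classical real-analysis theorem (uniform convergence of the gradients plus convergence at a single point implies the limit is differentiable with gradient $G$), whereas the paper argues distributionally, passing $\langle \nabla h_n,\phi\rangle=-\langle h_n,\text{div}\,\phi\rangle$ to the limit in $\mathcal S'$ and concluding $\nabla h=G$ as Schwartz distributions. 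Your route has a small advantage: it yields genuine pointwise differentiability of $h$ directly, while the paper's argument needs the (unstated) justification that $\langle h_n,\text{div}\,\phi\rangle\to\langle h,\text{div}\,\phi\rangle$, which requires pointwise convergence together with a uniform linear-growth bound on $h_n$ and dominated convergence, and then the observation that the distributional gradient $G$ is a continuous function. Your preliminary verification of the norm axioms, in particular definiteness via the same representation formula, is harmless and simply made explicit what the paper takes for granted.
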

\begin{proof}
  Let $(h_n)_n$ be a Cauchy sequence in $D\mathcal C^\alpha$. Then $h_n\in C^1$ and  since $\R$ and $\mathcal C^\alpha$ are complete,  we know that $h_n(0) \to c\in \mathbb R$ and  $ \nabla h_n \to g$ in $\mathcal C^\alpha$ hence uniformly. Now we  write
  $ h_n(x) = h_n(0)+ x \int_0^1 \nabla h_n (ax) \mathrm da$. We define  $ h(x) = c+ x \int_0^1 g (ax)\di a$, so that $\lim_{n\to\infty} h_n(x) = h(x)$. It is obvious that $c=h(0)$. Now we notice that  $\nabla h \in \mathcal S'$ so it is left to prove that $\nabla h =g$ in $\mathcal S'$ to conclude. For any test function $\phi \in \mathcal S$ we have $ \langle  \nabla h_n, \phi \rangle =\langle  h_n, -\text{div}( \phi) \rangle \to \langle  h, -\text{div}( \phi) \rangle $ as $n\to \infty$. On the other hand $ \langle  \nabla h_n, \phi \rangle \to  \langle  g, \phi \rangle$ hence we conclude $g= \nabla h$.
\end{proof}

 Next we study the mapping properties of the semigroup $P_t$ on $D\mathcal C^\alpha$  (and on the classical spaces $\mathcal C^{\alpha+1}$) for some fixed $\alpha \in(0,1)$.  
First we prove an inequality that is the analogous of Schauder's estimate \eqref{eq:Pt} with $\theta =0 $ on $D\mathcal C^\alpha$.
\begin{lemma}\label{lm:PtD}
If $h\in D\mathcal C^\alpha$, then
\begin{equation}\label{eq:PtD}
\sup_{s\in[0,T]} \|P_s h\|_{D\mathcal C^\alpha} \leq c \|h\|_{D\mathcal C^\alpha}.
\end{equation} 
\end{lemma}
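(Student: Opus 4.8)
The plan is to bound the two constituents of the norm $\|P_s h\|_{D\mathcal C^\alpha} = |(P_s h)(0)| + \|\nabla(P_s h)\|_\alpha$ separately and uniformly in $s\in[0,T]$. The first thing to settle is that $P_s h$ is a bona fide smooth function whose value at $0$ makes sense: since $h\in D\mathcal C^\alpha$ has at most linear growth, it is a regular tempered distribution, so $P_s h$ is defined in $\mathcal S'$; pairing $P_s h$ against a test function, using $\langle P_s h, \phi\rangle = \langle h, P_s\phi\rangle$, Fubini (justified by integrating linear growth against a Gaussian against a Schwartz function), and the symmetry of the kernel, one sees that $P_s h$ is represented by the smooth function $x\mapsto \int_{\R^d} p_s(x-y)h(y)\,\di y$. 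In particular $(P_s h)(0) = \int_{\R^d} p_s(y)h(y)\,\di y$ is well defined.

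For the gradient term I would use that $P_s$ and $\nabla$ commute in $\mathcal S'$ (Remark \ref{rm:nablaPt}), so $\nabla(P_s h) = P_s(\nabla h)$. Since $\nabla h\in\mathcal C^\alpha$ by assumption, Schauder's estimate \eqref{eq:Pt} taken with $\theta=0$ gives $\|\nabla(P_s h)\|_\alpha = \|P_s(\nabla h)\|_\alpha \leq c\|\nabla h\|_\alpha$ for every $s>0$, with $c$ independent of $s$. This simultaneously certifies that $\nabla(P_s h)\in\mathcal C^\alpha$, i.e.\ $P_s h\in D\mathcal C^\alpha$, and bounds the gradient part by $c\|h\|_{D\mathcal C^\alpha}$.

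For the point-evaluation term I would invoke the linear-growth representation recorded just above the statement, namely $h(y) = h(0) + y\cdot\tilde h(y)$ with $\tilde h(y)=\int_0^1 \nabla h(ay)\,\di a$, which yields the crude bound $|h(y)|\leq |h(0)| + |y|\,\|\nabla h\|_\infty \leq |h(0)| + |y|\,\|\nabla h\|_\alpha$. Substituting into the convolution representation and using $\int_{\R^d} p_s = 1$ gives
\[
|(P_s h)(0)| \leq |h(0)| + \|\nabla h\|_\alpha \int_{\R^d} p_s(y)\,|y|\,\di y .
\]
The remaining integral is the first absolute moment of a centred Gaussian with covariance $sI_d$, equal to $c_d\sqrt{s}\leq c_d\sqrt{T}$; hence $|(P_s h)(0)|\leq |h(0)| + c_d\sqrt{T}\,\|\nabla h\|_\alpha$ uniformly in $s\in[0,T]$.

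Combining the two estimates gives $\sup_{s\in[0,T]}\|P_s h\|_{D\mathcal C^\alpha}\leq c(1+\sqrt{T})\|h\|_{D\mathcal C^\alpha}$, which is the claim (the case $s=0$ being trivial). The only genuinely delicate point is the first one: confirming that the distributional action of $P_s$ on the linearly growing $h$ coincides with the convergent convolution integral, so that pointwise evaluation is legitimate, and then controlling its size via the first Gaussian moment. The gradient estimate, by contrast, is an immediate consequence of the commutation property and the $\theta=0$ case of the Schauder bound, and it is precisely the feature that the heat semigroup does not improve the growth at infinity (it only shifts $h(0)$ by an $O(\sqrt{s})$ amount) that keeps the bound uniform on $[0,T]$.
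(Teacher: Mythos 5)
Your proof is correct and follows essentially the same route as the paper: split the $D\mathcal C^\alpha$-norm into the point-evaluation and gradient parts, bound the former via the representation $h(y)=h(0)+y\cdot\int_0^1\nabla h(ay)\,\di a$ together with the first Gaussian moment of the heat kernel, and bound the latter by commuting $\nabla$ with $P_s$ and applying Schauder's estimate \eqref{eq:Pt} with $\theta=0$. The only difference is that you explicitly verify that the distributional action of $P_s$ on the linearly growing $h$ is given by the convergent convolution integral, a point the paper uses implicitly; this is a welcome extra care, not a divergence in method.
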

\begin{proof} 
Using the definition of the  norm in $D \mathcal C^\alpha $ we have  
\begin{align*}
  \|P_s h \|_{D\mathcal C^\alpha} = |(P_s h)(0)| + \|\nabla P_s h \|_{\alpha} =: B_1 (s) + B_2(s).
\end{align*}
Using the kernel of the semigroup and writing $h(x)= h(0) + x \cdot  \int_0^1 \nabla h (a x) \di a  $ we get
\begin{align*}
B_1 (s) &= |\int_{\R^d} p_s( y) h(y) \di y  |\\
& \leq  |\int_{\R^d} p_s( y) h(0) \di y  | +  |\int_{\R^d} p_s( y) y\cdot \int_0^1 \nabla h (a y) \di a  \di y  |\\
& \leq | h(0)| 1 + \int_{\R^d} p_s( y) |y| \sup_x |\nabla h ( x) |  \di y  \\
&\leq | h(0)|+  c\|\nabla h\|_{\alpha} 
\leq  c\| h\|_{D \mathcal C^\alpha} .
\end{align*}
On the other hand, since $\nabla$ and $P_t$ commute by Remark \ref{rm:nablaPt}, we have 
\[
B_2(s) =  \|\nabla P_s h \|_{ \alpha}  	= \| P_s \nabla h \|_{ \alpha}  \leq c   \|\nabla h \|_{ \alpha} \leq c \| h \|_{D\mathcal C^\alpha},
\]
having used Schauder's estimate \eqref{eq:Pt}. This proves \eqref{eq:PtD}.
\end{proof} 
 
\begin{lemma}\label{lm:PtDC}
Let $\alpha\in(0,1)$. 
\begin{itemize}
\item[(i)]
  The semigroup $P_t$ maps  $\mathcal C^{\alpha+1}$ into itself. Moreover if $h\in \mathcal C^{1+\alpha+\nu}$ for some $\nu>0$ such that $\alpha+\nu \in (0,1) $, then $ P_\cdot h\in C_T \mathcal C^{1+\alpha}$.
\item[(ii)] The semigroup $P_t$ maps  $D\mathcal C^\alpha$ into itself. Moreover if $h\in D\mathcal C^{\alpha+\nu}$ for some $\nu>0$ such that $\alpha+\nu \in (0,1) $, then $ P_\cdot h\in C_T  D\mathcal C^\alpha$.
\end{itemize} 
\end{lemma}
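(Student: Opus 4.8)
The plan is to treat the two assertions within each part separately: first the mapping property (that $P_t$ preserves the space), then the time-continuity, which is precisely where the extra regularity index $\nu$ gets spent.

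For part (i), the mapping property $P_t\colon\mathcal C^{\alpha+1}\to\mathcal C^{\alpha+1}$ is nothing but Schauder's estimate \eqref{eq:Pt} read with $\theta=0$ and $\gamma=1+\alpha$, giving $\|P_t h\|_{1+\alpha}\le c\|h\|_{1+\alpha}$. For continuity in time I would set $\theta=\nu/2$, which lies in $(0,1)$ since the hypothesis $\alpha+\nu\in(0,1)$ forces $\nu<1$. Then \eqref{eq:PcontC} with $\gamma=1+\alpha$ gives $\|P_t h-P_s h\|_{1+\alpha}\le c\,(t-s)^{\nu/2}\|h\|_{1+\alpha+\nu}$ for $t>s>0$, that is, H\"older continuity on $(0,T]$, while \eqref{eq:Pt-I} with the same indices gives $\|P_t h-h\|_{1+\alpha}\le c\,t^{\nu/2}\|h\|_{1+\alpha+\nu}\to0$, securing continuity at $t=0$. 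Together these yield $P_\cdot h\in C_T\mathcal C^{1+\alpha}$.

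For the mapping property in part (ii), the uniform bound $\sup_{s}\|P_s h\|_{D\mathcal C^\alpha}\le c\|h\|_{D\mathcal C^\alpha}$ is exactly Lemma \ref{lm:PtD}; what remains is to check that $P_s h$ genuinely belongs to $D\mathcal C^\alpha$, i.e.\ that it is differentiable with gradient in $\mathcal C^\alpha$. Here I would invoke Remark \ref{rm:nablaPt} to write $\nabla P_s h=P_s\nabla h$, and since $\nabla h\in\mathcal C^\alpha$, Schauder \eqref{eq:Pt} with $\theta=0$ gives $P_s\nabla h\in\mathcal C^\alpha$; the integral defining $P_s h$ converges because the representation $h(y)=h(0)+y\cdot\int_0^1\nabla h(ay)\,\di a$ shows $h$ has at most linear growth, which is dominated by the Gaussian kernel.

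For the continuity in part (ii) I would exploit the explicit form of the norm \eqref{eq:normS}, namely $\|P_t h\|_{D\mathcal C^\alpha}=|(P_t h)(0)|+\|\nabla P_t h\|_\alpha$, and establish continuity of the two summands in $t$ separately. The gradient term reduces to part (i): since $\nabla h\in\mathcal C^{\alpha+\nu}$ and $\nabla P_t h=P_t\nabla h$, applying \eqref{eq:PcontC} and \eqref{eq:Pt-I} with $\gamma=\alpha$ and $\theta=\nu/2$ shows $t\mapsto\nabla P_t h$ is (H\"older) continuous in $\mathcal C^\alpha$. The scalar term $t\mapsto(P_t h)(0)=\int p_t(y)h(y)\,\di y$ is the delicate point. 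For $t$ in compact subsets of $(0,\infty)$ continuity follows from dominated convergence, the linear growth of $h$ being controlled by the Gaussian decay; the genuine obstacle is continuity at $t=0$. There I would use $\int p_t=1$ to write $(P_t h)(0)-h(0)=\int p_t(y)\,y\cdot\tilde h(y)\,\di y$ with $\tilde h(y)=\int_0^1\nabla h(ay)\,\di a\in\mathcal C^\alpha$ bounded, and the Gaussian rescaling $y=\sqrt t\,z$ to obtain $|(P_t h)(0)-h(0)|\le c\sqrt t\,\|\tilde h\|_\infty\to0$. Combining the two summands gives $P_\cdot h\in C_T D\mathcal C^\alpha$; the whole difficulty is concentrated in this last linear-growth estimate at $t=0$, which is exactly what forces one to work with the $D\mathcal C^\alpha$ norm rather than a Besov norm.
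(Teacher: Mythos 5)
Your proof is correct and follows essentially the same route as the paper's: part (i) from Schauder's estimate together with \eqref{eq:PcontC}--\eqref{eq:Pt-I}, and part (ii) via the commutation $\nabla P_t h = P_t \nabla h$ of Remark \ref{rm:nablaPt} and Lemma \ref{lm:PtD}, splitting the $D\mathcal C^\alpha$-norm into the point-evaluation term and the gradient term, with the latter reduced to part (i) at level $\gamma=\alpha$, $\theta=\nu/2$. The only (cosmetic) difference is in the scalar term: the paper bounds $|(P_{t+\eps}h)(0)-(P_t h)(0)|$ uniformly in $t$ via the Brownian representation $(P_th)(0)=\E[h(W^0_t)]$ and the Lipschitz estimate $\|\nabla h\|_\infty\,\E|W^0_\eps| = \sqrt{2/\pi}\,\eps^{1/2}\|\nabla h\|_\infty$, whereas you use dominated convergence on compacts of $(0,\infty)$ plus the Gaussian rescaling $y=\sqrt t\,z$ at $t=0$ --- which is the same first-moment estimate in analytic dress.
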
 
\begin{proof}

{\em Item (i)} This is an obvious consequence of Schauder's estimate (Lemma \ref{lm:schauder}) and equation \eqref{eq:PcontC}.

{\em Item (ii)} 
Let $h \in D\mathcal C^\alpha \subset \mathcal S'$. Let $t\geq 0$ be fixed.  
 By Remark \ref{rm:nablaPt}  $ \nabla P_t h  = P_t \nabla h$, so that $ \nabla P_t h  \in  \mathcal C^\alpha$ (and this automatically implies that  $P_t h$ is a differentiable function of $x$). 

Next we show that $t\mapsto P_t h$ is continuous with values in $D\mathcal C^\alpha$ if $h\in D\mathcal C^{\alpha+\nu}$. We need to show that for each $t\geq 0$ we have
\begin{align}\nonumber
 &\|P_{t+\eps} h - P_t h\|_{D\mathcal C^\alpha} \\ \label{eq:limDC}
&= |(P_{t+\eps} h)(0) - (P_t h)(0)| +\|\nabla P_{t+\eps} h - \nabla P_t h\|_{\alpha} \to  0 \text{ as }\varepsilon \to 0 .
\end{align}

Concerning first term in \eqref{eq:limDC}  we note that since $h\in D\mathcal C^{\alpha+\nu}$ then $\nabla h$ 
belongs to $ C^{\alpha+\nu},$
and $\| \nabla h \|_\infty \le \|\nabla h\|_{\mathcal C^{\alpha+\nu}}$. 
We observe that for any $t\geq0$ and $x\in \mathbb R^d$ we have $(P_th)(x) = \E [h(W^x_t)]$ where $(W^x_t)$ is a Brownian motion starting at $W_0 =x$.
Hence
\begin{align}\label{eq:b1}\notag
|(P_{t+\eps}h)(0)-(P_{t}h)(0)| &= |\E [h(W^0_{t+\eps}) - h(W^0_t)] |\\ \notag
 &\leq \E  [|h(W^0_{t+\eps}) - h(W^0_t)|] \\ \notag
 &\leq \|\nabla h\|_{\infty} \E  [|W^0_{t+\eps}- W^0_t|] \\ \notag
 &= \|\nabla h\|_{\alpha+\nu} \E  [|W^0_{\eps}|]\\
 &=\sqrt{\tfrac2\pi} \eps^{\tfrac12}\|\nabla h\|_{{\alpha+\nu}}.
\end{align}

The second term  in \eqref{eq:limDC}  can be bounded by 
\begin{equation}\label{eq:b2}
\|\nabla P_{t+\eps} h - \nabla P_t h\|_{\alpha}\leq c \eps^{\nu/2} \|\nabla h\|_{\alpha+\nu}
\end{equation}
 by using the fact that $\nabla$ and $P_t$ commute by Remark \ref{rm:nablaPt} together with \eqref{eq:PcontC} $\theta=\nu/2$. 

Putting \eqref{eq:b1} and \eqref{eq:b2} together we get
\[
\|P_{t+\eps} h - P_t h\|_{D\mathcal C^\alpha} \leq c \eps^{\frac{\nu\wedge 1}2} \|\nabla h\|_{{\alpha+\nu}}  \leq c \eps^{\frac{\nu\wedge 1}2} \|  h\|_{D\mathcal C^{\alpha+\nu}},
\]
which shows $P_\cdot h\in C_T D\mathcal C^{\alpha}$ as wanted.  
\end{proof}

\begin{lemma}\label{lm:PtDC2}
Let $\alpha \in(0,1)$.
\begin{itemize}
\item[(i)] Let $h\in C_T \mathcal C^{\alpha+1}$. Then  $\int_\cdot^T P_{s-\cdot}h(s) \di s\in C_T \mathcal C^{\alpha+1}$  and $\| \int_\cdot^T P_{s-\cdot} h(s) \di s\|_{C_T \mathcal C^{\alpha+1}}\leq c\| h\|_{C_T  \mathcal C^{\alpha+1}}$.
\item[(ii)]  Let $h\in C_T D\mathcal C^{\alpha}$. Then  
$\int_\cdot^T P_{s-\cdot}h(s) \di s\in C_T D\mathcal C^{\alpha}$  and $\| \int_\cdot^T P_{s-\cdot} h(s) \di s\|_{C_T D\mathcal C^{\alpha}}\leq c\| h\|_{C_T  D\mathcal C^{\alpha}}$.
\end{itemize}
\end{lemma}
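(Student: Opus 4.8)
The plan is to set, for $t\in[0,T]$,
\[
w(t):=\int_t^T P_{s-t}h(s)\,\di s=\int_0^{T-t}P_u h(t+u)\,\di u ,
\]
the second expression coming from the change of variables $u=s-t$, and to treat both items in parallel. I would first make sense of the integral as an element of $\mathcal S'$ by duality, setting $\langle w(t),\phi\rangle:=\int_t^T\langle P_{s-t}h(s),\phi\rangle\,\di s$ for $\phi\in\mathcal S$: the scalar integrand $s\mapsto\langle h(s),P_{s-t}\phi\rangle$ is continuous on $[t,T]$ (since $h$ is $\mathcal S'$-continuous in $s$ and $u\mapsto P_u\phi$ is $\mathcal S$-continuous, including at $u=0$), so the integral is well defined. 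The sharper membership in $\mathcal C^{\alpha+1}$ and the norm bounds are then read off block by block from the Littlewood--Paley characterisation \eqref{eq:Cgamma}, estimating each $\mathcal F^{-1}(\varphi_j\mathcal F w(t))$ by the integral of $\mathcal F^{-1}(\varphi_j\mathcal F P_{s-t}h(s))$; this avoids any integration in the non-separable space $\mathcal C^{\alpha+1}$ itself.

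For item (i), Schauder's estimate \eqref{eq:Pt} with $\theta=0$ gives $\|P_u h(s)\|_{\alpha+1}\le c\|h(s)\|_{\alpha+1}$ uniformly in $u\in[0,T]$, whence the blockwise bound yields
\[
\|w(t)\|_{\alpha+1}\le\int_t^T\|P_{s-t}h(s)\|_{\alpha+1}\,\di s\le cT\,\|h\|_{C_T\mathcal C^{\alpha+1}} .
\]
The crux is the continuity $t\mapsto w(t)$ in $\mathcal C^{\alpha+1}$, and here I expect the main obstacle: one cannot use strong continuity of $P_u$ at $u=0$ in the $\mathcal C^{\alpha+1}$-norm, which fails at this exact regularity. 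Instead, for $t<t'$ I would split via the second representation
\[
w(t)-w(t')=\int_0^{T-t'}P_u\bigl[h(t+u)-h(t'+u)\bigr]\,\di u+\int_{T-t'}^{T-t}P_u h(t+u)\,\di u ,
\]
bounding the first term in $\mathcal C^{\alpha+1}$ by $cT\sup_r\|h(r)-h(r+(t'-t))\|_{\alpha+1}$, which vanishes as $t'\to t$ by uniform continuity of $h\colon[0,T]\to\mathcal C^{\alpha+1}$, and the second by $c(t'-t)\|h\|_{C_T\mathcal C^{\alpha+1}}$. Thus only the uniform boundedness of $P_u$, not its continuity at $0$, is required.

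For item (ii), I would reduce to (i) through the decomposition \eqref{eq:normS} of the $D\mathcal C^\alpha$-norm. Since $\nabla$ commutes with $P_u$ by Remark \ref{rm:nablaPt}, we have $\nabla w(t)=\int_t^T P_{s-t}\nabla h(s)\,\di s$, which is precisely the object of item (i) applied to $\nabla h\in C_T\mathcal C^{\alpha}$; this supplies both $\|\nabla w(t)\|_\alpha\le cT\|\nabla h\|_{C_T\mathcal C^\alpha}$ and the continuity of $t\mapsto\nabla w(t)$ in $\mathcal C^\alpha$. It then remains to handle the scalar $w(t)(0)=\int_t^T(P_{s-t}h(s))(0)\,\di s$: reusing the kernel computation from the proof of Lemma \ref{lm:PtD}, namely $|(P_u g)(0)|\le|g(0)|+c\sqrt{u}\,\|\nabla g\|_\infty$, gives $|(P_u h(s))(0)|\le c\|h(s)\|_{D\mathcal C^\alpha}$ for $u\in[0,T]$ and hence $|w(t)(0)|\le cT\|h\|_{C_T D\mathcal C^\alpha}$. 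Its continuity in $t$ follows from the same two-term split, applying the above pointwise estimate to $g=h(t+u)-h(t'+u)$ so that the difference is controlled by $|h(r)(0)-h(r')(0)|+c\sqrt{T}\,\|\nabla h(r)-\nabla h(r')\|_\infty$, which again vanishes by uniform continuity of $h$ in $D\mathcal C^\alpha$. Combining the gradient part and the value at $0$ yields $w\in C_T D\mathcal C^\alpha$ with the stated bound.
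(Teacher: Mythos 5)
Your proof is correct and follows essentially the same route as the paper: the same change of variables $u=s-t$, the same two-term split of $w(t)-w(t')$ into a modulus-of-continuity piece plus a boundary integral of length $|t'-t|$, and the same uniform-in-time bounds on $P_u$ (Schauder's estimate \eqref{eq:Pt} with $\theta=0$, and the $D\mathcal C^\alpha$ analogue). The only cosmetic difference is that in item (ii) you unpack the $D\mathcal C^\alpha$-norm into the gradient part (via Remark \ref{rm:nablaPt}) and the value at $0$ (via the kernel estimate), whereas the paper invokes Lemma \ref{lm:PtD} as a black box --- but that lemma's proof is exactly this computation.
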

\begin{proof}
We first  show that given $h\in C_T D\mathcal C^{\alpha}$ (resp.\ $h\in C_T \mathcal C^{\alpha+1}$), then  \\
$\int_0^{T-\cdot} P_{s}h(s+\cdot) \di s\in C_T D\mathcal C^{\alpha}$ (resp. $\int_0^{T-\cdot} P_{s}h(s+\cdot) \di s\in C_T \mathcal C^{\alpha+1}$), which is equivalent to the first part of the claim in (ii) (resp.\ in  (i)).
To this aim, let $t_n\to t_0$. We have
\begin{align}\label{eq:tnt0}
&\int_0^{T-t_n} P_s h(s+t_n) \di s - \int_0^{T-t_0} P_s h(s+t_0) \di s \nonumber  \\
&=\int_0^{T-t_0} P_s [h(s+t_n)- h(s+t_0)] \di s    + \int_{T-t_0}^{T-t_n} P_s h(s+t_n) \di s .   
\end{align}
We denote by $\delta (h; s)  $ the modulus of continuity of $h$ in $D\mathcal C^{\alpha} $ (resp.\  in $\mathcal C^{\alpha+1}$). Then the first integral in \eqref{eq:tnt0} is bounded in the $D\mathcal C^{\alpha}$-norm using \eqref{eq:PtD} (resp.\ in the $\mathcal C^{\alpha+1}$-norm using \eqref{eq:Pt} with $\theta=0$ and $\gamma = \alpha+1$) to get 
\begin{align*}
\|\int_0^{T-t_0} &P_s [h(s+t_n)- h(s+t_0)] \di s\|_{D\mathcal C^{\alpha}} \\
&\leq \int_0^{T-t_0} \| P_s [h(s+t_n)- h(s+t_0)] \|_{D\mathcal C^{\alpha}}\di s \\
&\leq c \int_0^{T-t_0} \|h(s+t_n)- h(s+t_0) \|_{D\mathcal C^{\alpha}}\di s \\
&= c \int_0^{T-t_0} \delta (h; t_n-t_0) \di s  \\
&=  c (T-t_0) \delta (h; t_n-t_0) ,
\end{align*}
respectively 
\[
\|\int_0^{T-t_0} P_s [h(s+t_n)- h(s+t_0)] \di s\|_{\mathcal C^{\alpha+1}} \leq c  (T-t_0) \delta (h; t_n-t_0), 
\]
which tends to 0 as $n\to \infty$. The second integral  in \eqref{eq:tnt0} is bounded again using \eqref{eq:PtD} (resp.\ using \eqref{eq:Pt} with $\theta=0$  and $\gamma = \alpha+1$) to get 
\begin{align*}
\|\int_{T-t_0}^{T-t_n} P_s h(s+t_n) \di s \|_{D\mathcal C^{\alpha}} 
&\leq \left| \int_{T-t_0}^{T-t_n} \|  P_s h(s+t_n) \|_{D\mathcal C^{\alpha}}\di s \right| \\
&\leq c \left |\int_{T-t_0}^{T-t_n} \|h(s+t_n) \|_{D\mathcal C^{\alpha}}\di s\right| \\
&= c \left| \int_{T-t_0}^{T-t_n} \|h\|_{C_T D\mathcal C^\alpha} \di s  \right|\\
&=  c |t_0-t_n| \|h\|_{C_T D\mathcal C^\alpha} , 
\end{align*}
respectively 
\[
\|\int_{T-t_0}^{T-t_n} P_s h(s+t_n) \di s \|_{\mathcal C^{\alpha+1}} \leq  c |t_0-t_n|  \|h\|_{C_T \mathcal C^{\alpha+1}}, 
\]
which tends to 0 as $n\to \infty$.

It is left to prove that $\| \int_\cdot^T P_{s-\cdot} h(s) \di s\|_{C_T D\mathcal C^{\alpha}}\leq c\| h\|_{C_T  D\mathcal C^{\alpha}}$ for point (ii) (resp.\ $\| \int_\cdot^T P_{s-\cdot} h(s) \di s\|_{C_T \mathcal C^{\alpha+1}}\leq c\| h\|_{C_T  \mathcal C^{\alpha+1}}$ for point (i)). Using again \eqref{eq:PtD} (resp.\ \eqref{eq:Pt} with $\theta=0$) we have 
\begin{align*}
\| \int_\cdot^T P_{s-\cdot} h(s) \di s\|_{C_T D\mathcal C^{\alpha}} &= \sup_{t\in[0,T]} \| \int_t^T P_{s-t} h(s) \di s\|_{D\mathcal C^{\alpha}}\\
 &\leq   \sup_{t\in[0,T]} \int_t^T \|P_{s-t} h(s) \|_{D\mathcal C^{\alpha}} \di s \\
 &\leq  c \sup_{t\in[0,T]} \int_t^T \| h(s) \|_{D\mathcal C^{\alpha}} \di s \\
  &\leq  c T \| h \|_{C_TD\mathcal C^{\alpha}}  ,
\end{align*}
respectively 
\[
\| \int_\cdot^T P_{s-\cdot} h(s) \di s\|_{C_T \mathcal C^{\alpha+1}}  \leq  c T \| h \|_{C_T\mathcal C^{\alpha+1}}, 
\]
which is the claim. 
 \end{proof}
In fact, it turns out that in the $\mathcal C^\alpha$ spaces a stronger continuity result will be needed, which is the following. 
\begin{lemma}\label{lm:PtS}
If  $h\in C_T \mathcal C^{(-\beta)+}$ then $\int_0^\cdot P_{\cdot-s} h(s) \di s \in C_T \mathcal C^{1+\alpha}$ with any $\alpha \in [\beta, 1-\beta) $.
\end{lemma}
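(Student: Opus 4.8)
The plan is to use the smoothing of the heat semigroup, quantified by Schauder's estimate \eqref{eq:Pt}, to convert the (possibly negative) spatial regularity of $h$ into the gain needed to land in $\mathcal C^{1+\alpha}$, and then to prove continuity in time by a splitting argument of the same type as in the proof of Lemma \ref{lm:PtDC2}; the only genuinely new feature is the integrable time-singularity of the convolution kernel. To set up the constants: since $h\in C_T\mathcal C^{(-\beta)+}$, by the characterization recalled after the definition of these spaces there is $\gamma>-\beta$ with $h\in C_T\mathcal C^\gamma$, and using the embedding $\mathcal C^{\gamma}\subset\mathcal C^{\gamma'}$ for $\gamma'<\gamma$ I may fix $\gamma\in(-\beta,1+\alpha)$. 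I then set $\theta:=\tfrac12(1+\alpha-\gamma)$, so that $\gamma+2\theta=1+\alpha$. Here $\gamma<1+\alpha$ gives $\theta>0$, while the assumption $\alpha<1-\beta$ is exactly what forces $\theta<\tfrac12(1+\alpha+\beta)<1$; thus $\theta\in(0,1)$ and $r\mapsto r^{-\theta}$ is integrable on $[0,T]$.

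Writing $w(t):=\int_0^t P_{t-s}h(s)\,\di s$, the boundedness is immediate from \eqref{eq:Pt}: for each $t$,
\[
\|w(t)\|_{1+\alpha}\leq \int_0^t \|P_{t-s}h(s)\|_{\gamma+2\theta}\,\di s \leq c\int_0^t (t-s)^{-\theta}\|h(s)\|_\gamma\,\di s \leq c\,\|h\|_{C_T\mathcal C^\gamma}\,\frac{T^{1-\theta}}{1-\theta},
\]
so that $\sup_{t\in[0,T]}\|w(t)\|_{1+\alpha}<\infty$, $w(t)\in\mathcal C^{1+\alpha}$ for every $t$, and $w(0)=0$.

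For the continuity in time I perform the change of variable $r=t-s$ to get $w(t)=\int_0^t P_r h(t-r)\,\di r$, and for a sequence $t_n\to t_0$ with $t_n>t_0$ (the case $t_n<t_0$ being symmetric) I split
\[
w(t_n)-w(t_0)=\int_0^{t_0} P_r\big[h(t_n-r)-h(t_0-r)\big]\,\di r + \int_{t_0}^{t_n} P_r h(t_n-r)\,\di r.
\]
Applying \eqref{eq:Pt} with exponent $\theta$ to each term, and denoting by $\delta(h;\cdot)$ the modulus of continuity of $h$ in $C_T\mathcal C^\gamma$, the first term is bounded by $c\,\delta(h;|t_n-t_0|)\,\tfrac{T^{1-\theta}}{1-\theta}$ and the second by $c\,\|h\|_{C_T\mathcal C^\gamma}\,\tfrac{t_n^{1-\theta}-t_0^{1-\theta}}{1-\theta}$; both vanish as $n\to\infty$ precisely because $\theta<1$. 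This establishes $w\in C_T\mathcal C^{1+\alpha}$ for every admissible $\alpha$ (the binding restriction in the argument being the upper bound $\alpha<1-\beta$).

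The main obstacle, and the one place where this lemma genuinely differs from Lemma \ref{lm:PtDC2}, is the handling of the singular factor $(t-s)^{-\theta}$: in the $\theta=0$ situation there the semigroup merely acts boundedly, whereas here the integrand blows up as $s\uparrow t$, so both the finiteness of the norms and the time-continuity hinge on the strict inequality $\theta<1$, i.e. on $\alpha<1-\beta$, to keep the improper time integrals convergent and, in the splitting, to make the boundary contribution $t_n^{1-\theta}-t_0^{1-\theta}$ tend to zero.
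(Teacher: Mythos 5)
Your proof is correct, but it is genuinely different in character from what the paper does: the paper's entire proof of Lemma \ref{lm:PtS} is a one-line citation, namely that the statement is an adaptation of \cite[Lemma 3.2]{issoglio19} in the special case $h\in C_T\mathcal C^{-\beta}\subset L^\infty([0,T];\mathcal C^{-\beta})$, whereas you give a self-contained argument. Your reduction is sound: extracting $\gamma>-\beta$ from $h\in C_T\mathcal C^{(-\beta)+}$ via the characterization of $C_T\mathcal C^{\gamma+}$, lowering $\gamma$ into $(-\beta,1+\alpha)$ by the embedding, and setting $\theta=\tfrac12(1+\alpha-\gamma)$ indeed gives $\theta<\tfrac12(1+\alpha+\beta)<1$ exactly because $\alpha<1-\beta$, so Schauder's estimate \eqref{eq:Pt} with an integrable singularity $(t-s)^{-\theta}$ yields both the uniform bound and, via the change of variable $r=t-s$ and the two-term splitting (which is the singular-kernel analogue of the splitting in the proof of Lemma \ref{lm:PtDC2}), the time-continuity; the boundary term $t_n^{1-\theta}-t_0^{1-\theta}$ and the modulus-of-continuity term both vanish. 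What your route buys is transparency and a quantitative modulus of continuity for $w$, at the price of reproving what the cited lemma already contains; what the paper's route buys is brevity, and the external lemma covers the rougher class $L^\infty([0,T];\mathcal C^{-\beta})$, where your modulus-of-continuity argument for the first term would not apply as stated (there one must exploit the smoothing in a different way, since $\delta(h;\cdot)$ need not exist). The only point you gloss over is the well-definedness of the improper Bochner integral as an element of $\mathcal C^{1+\alpha}$: one should note that $s\mapsto P_{t-s}h(s)$ is continuous into $\mathcal C^{1+\alpha}$ on $[0,t)$ (e.g.\ by the semigroup property together with \eqref{eq:PcontC}), so that your norm estimate makes the integral converge in $\mathcal C^{1+\alpha}$ and not merely in $\mathcal S'$; this is routine and does not affect correctness. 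You also correctly observe that the lower endpoint $\alpha\geq\beta$ in the statement is never used, only $\alpha<1-\beta$.
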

\begin{proof}
  This is the adaptation of \cite[Lemma 3.2]{issoglio19} in the special case $h\in C_T \mathcal C^{-\beta}\subset L^\infty ([0,T]; \mathcal C^{-\beta} )$.
\end{proof}

Analogously as for the $\mathcal C^{\gamma+}$-spaces, for $\gamma>0$ we also introduce the spaces 
$$D \mathcal C^{\gamma+}:= \cup_{\alpha >\gamma} D\mathcal C^{\alpha} ,  \qquad  
D\mathcal C^{\gamma-}:= \cap_{\alpha <\gamma} D\mathcal C^{\alpha}.
$$
We will also use the spaces $C_T D\mathcal  C^{\gamma+}:=C([0,T]; D \mathcal C^{\gamma+})$. We remark that $f\in C_T D \mathcal  C^{\gamma+} $ if and only if there exists $\alpha>\gamma $ such that $f\in C_T D \mathcal  C^{\alpha}$, see \cite[Remark B.1]{issoglio_russoMK}. 
Similarly, we use the space   $C_T D C^{\gamma-}:=C([0,T]; D \mathcal C^{\gamma-})$; we observe in particular that if $f\in C_T D \mathcal C^{\gamma-} $ then for any $\alpha<\gamma $ we have $f\in C_T D\mathcal C^{\alpha}$.

\section{Main results}\label{sc:PDE}

In this section we prove existence, uniqueness, continuity  properties and various bounds for solutions to a class of parabolic  PDEs with unbounded terminal condition. This means that said solutions too are unbounded, indeed they live in the space $C_T D\mathcal C^{\beta+} $. We also consider a special case of this class where terminal conditions are bounded, hence also the solutions are bounded, i.e.\ they live in $C_T \mathcal C^{(1+\beta)+} $. 

\subsection{Assumptions}

We introduce here various assumptions concerning dis\-tri\-bu\-tion\--valued functions ($b$ respectively $g$) needed below in the paper.

\begin{assumption}\label{ass:param-b} 
Let $0<\beta<1/2$ and  $b\in C_T \mathcal C^{(-\beta)+}(\R^d)$. In particular $b\in C_T \mathcal C^{-\beta}(\R^d)$. Notice that $b$ is a column vector.
\end{assumption}


Next we introduce two assumptions concerning  $g $ and $ v_T$.

\begin{assumption}\label{ass:PDEv}
We suppose that $g\in C_T \mathcal C^{(-\beta)+}$  and $v_T \in  D\mathcal C^{(1-\beta)-} $. 
\end{assumption} 

\begin{assumption}\label{ass:PDE}
We suppose that  $g\in C_T \mathcal C^{(-\beta)+}$  and $v_T \in  \mathcal C^{(2-\beta)-} $.
\end{assumption}

The main difference between  Assumption \ref{ass:PDE}  and Assumption \ref{ass:PDEv} is that in the latter we allow the terminal condition to be unbounded, in particular we can choose $v_T = \text{id}$, while in the former the identity function is excluded.

\subsection{A class of PDEs with drifts in Besov spaces}
Let $b$ fulfill Assumption \ref{ass:param-b} for the rest of Section \ref{sc:PDE}.
 Let $v_T\in\mathcal S'$ and $t\mapsto g(t,\cdot)$ be continuous in $\mathcal S'$.  We consider here PDEs of the form
\begin{equation}\label{eq:PDE}
\left\{ 
\begin{array}{l}
\partial_t  v + \tfrac12 \Delta v +\nabla v \, b  =  \lambda v + g
\\
v(T) = v_T.
\end{array}\right. 
\end{equation}
We consider weak and mild solutions,  both defined in the space $ C_T  D\mathcal C^{\beta}  $,
 as detailed below. To shorten notation, we define 
 \[
G(v) := \lambda v + g. 
 \]

\begin{definition}\label{def:weak} 
Let  $v\in C_T  D \mathcal C^{\beta}  $. 
We say that $v$ is a weak solution of \eqref{eq:PDE} if for all $\varphi \in \mathcal S(\R^d)$  we have that $v$ satisfies
\begin{align}\label{eq:weak}
&\int_{\R^d} \varphi(x) v_T(x) \di x -\int_{\R^d} \varphi(x)v(t,x) \di x + \int_t^T \int_{\R^d} \frac12\Delta \varphi(x) v(s,x) \di x \, \di s \\ \nonumber
&+ \int_t^T \int_{\R^d} \varphi(x) \left(\nabla v(s,x) b(s,x) \right) \di x \di s   \nonumber 
= \int_t^T \int_{\R^d} \varphi(x) {G(v)(s,x)} \di x \di s, \nonumber
\end{align}
for all $t\in[0,T]$.
\end{definition}
Notice that the notation $\int_{\R^d} \varphi(x) \left(\nabla v(s,x) b(s,x)\right) \di x$ is only formal because $\nabla v(s,\cdot) b(s,\cdot)$ is a distribution. In practice when we write the integral we mean the dual pairing with $\varphi$, namely  $\langle \varphi,\nabla v(s) b(s)\rangle  $, where the pairing in $\mathcal S, \mathcal S'$ is well-defined as an element in $ \mathcal C^{(-\beta)+}$    via the pointwise product \eqref{eq:bony}.

\begin{definition}\label{def:mild}
Let  $v\in C_T  D \mathcal C^{\beta}  $. 
We say that $v$ is a mild solution of \eqref{eq:PDE} if $v$ satisfies
\begin{equation}\label{eq:mild}
v(t) = P_{T-t} v_T  + \int_t^T P_{s-t} \left(\nabla v(s) b(s) \right) \di s - \int_t^T P_{s-t}  (G(v)(s))\di s,
\end{equation}
for all $t\in[0,T]$.
\end{definition}
Note that  for each $s\in[0,T]$ the product $\nabla v(s) \, b(s)$ appearing in \eqref{eq:weak} and \eqref{eq:mild} is well-defined as an element of $\mathcal C^{(-\beta)+}$ using the pointwise product \eqref{eq:bony}, thanks to Assumption \ref{ass:param-b}.  Indeed since $v \in C_T D \mathcal C^{\beta} $ and $b \in C_T \mathcal C^{-(\beta)+}$  we can always choose $\eps>0$ such that $b \in C_T \mathcal C^{-\beta+ \eps}$ so that $-\beta+\eps+\beta = \eps>0$ and \eqref{eq:bonyt} holds. 
Moreover both integrals are  well-defined as Bochner integrals with values  in $\mathcal S' $ because  $(s,r)\mapsto P_sh(r)$ is jointly continuous with values in $\mathcal S'$ (where $h$ is either $\nabla v\, b$ or $G(v)$, and the continuity follows from Lemma \ref{lm:jointcontinuity}).

For future use, it is convenient to properly define the singular operator  $\mathcal L$, formally given by 
$\mathcal L f = \partial_t f + \tfrac12\Delta f+ \nabla f \, b$.

\begin{definition}\label{def:L}
Let $b$ satisfy Assumption \ref{ass:param-b}. The operator $\mathcal L$ is defined as  
\begin{equation*}
\begin{array}{lcll}
\mathcal L  :  &\mathcal D_{\mathcal L}^0 &\to &\{\mathcal S'\text{-valued continuous functions}\}\\
& f & \mapsto & \mathcal L f:=   \dot f + \frac12\Delta f + \nabla f \, b,
\end{array}
\end{equation*}
where 
$$\mathcal D_{\mathcal L}^0 : = C_T  D\mathcal C^{\beta} \cap C^1([0,T]; \mathcal S').$$
Here $f: [0,T]\times  \R^d \to \R$ and the function $\dot f:[0,T]\to \mathcal S'$ is the time-derivative of $f$. Note also that $\nabla f \, b $ is well-defined   and continuous using  \eqref{eq:bonyt}  and Assumption \ref{ass:param-b}.
 The Laplacian $\Delta$ is intended in the weak sense.
\end{definition}

\begin{remark}\label{rm:AC}
We observe that if $v\in C_T D \mathcal C^{\beta} $  is a weak solution, then it is automatically differentiable in time with continuous derivative in $\mathcal S'$, hence $v\in \mathcal D_{\mathcal L}^0 .$ The same is true for  $v\in C_T \mathcal C^{1+\beta} $ by the inclusion of the spaces. 
\end{remark}

Using the operator $\mathcal L$ defined in Definition \ref{def:L} and Remark \ref{rm:AC}, we see that PDE \eqref{eq:PDE} rewrites as
\[
\left\{ 
\begin{array}{l}
\mathcal L  v   =  \lambda v + g
\\
v(T) = v_T.
\end{array}\right.
\]

\begin{proposition}\label{pr:mildweak}
Weak and mild solutions of \eqref{eq:PDE} are equivalent in  $C_T D \mathcal C^{\beta} $.
\end{proposition}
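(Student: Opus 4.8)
The plan is to prove both implications by testing against an arbitrary $\varphi\in\mathcal S$ and exploiting three facts about the semigroup: $P_t$ is self-adjoint ($\langle P_t\psi,\phi\rangle=\langle\psi,P_t\phi\rangle$), it commutes with $\Delta$ and $\nabla$ (Remark \ref{rm:nablaPt}), and $s\mapsto P_s\varphi$ is smooth in $\mathcal S$ with $\frac{\di}{\di s}P_s\varphi=\tfrac12\Delta P_s\varphi$. Throughout I abbreviate $H(s):=\nabla v(s)\,b(s)-G(v)(s)$. Since $v\in C_T D\mathcal C^\beta\subset C_T\mathcal S'$, $g\in C_T\mathcal C^{(-\beta)+}\subset C_T\mathcal S'$, and $\nabla v\,b\in C_T\mathcal C^{(-\beta)+}$ by the Bony estimate \eqref{eq:bonyt} under Assumption \ref{ass:param-b}, the map $H$ is a well-defined element of $C_T\mathcal S'$; by Lemma \ref{lm:jointcontinuity} the map $(s,r)\mapsto P_s H(r)$ is then jointly continuous in $\mathcal S'$, so all the Bochner integrals below make sense and may be exchanged with the (fixed) pairing $\langle\cdot,\varphi\rangle$. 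With this notation the mild identity \eqref{eq:mild} reads $v(t)=P_{T-t}v_T+\int_t^T P_{s-t}H(s)\,\di s$, while, moving the Laplacian onto $v$ by self-adjointness, \eqref{eq:weak} is equivalent to
\begin{equation*}
\langle v(t),\varphi\rangle=\langle v_T,\varphi\rangle+\int_t^T\big\langle \tfrac12\Delta v(s)+H(s),\varphi\big\rangle\,\di s,\qquad\text{for all }\varphi\in\mathcal S,\ t\in[0,T].
\end{equation*}

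For \emph{mild $\Rightarrow$ weak} I would substitute the mild representation of $v(s)$ into $\int_t^T\langle v(s),\tfrac12\Delta\varphi\rangle\,\di s$. Using self-adjointness to transfer $P$ onto $\varphi$, the identities $\tfrac12\Delta P_{T-s}\varphi=-\frac{\di}{\di s}P_{T-s}\varphi$ and $\tfrac12\Delta P_{r-s}\varphi=-\frac{\di}{\di s}P_{r-s}\varphi$, Fubini on the simplex $\{t\le s\le r\le T\}$ (legitimate because the scalar integrand is continuous, hence bounded, there), and the fundamental theorem of calculus in $s$, this integral collapses to
\begin{equation*}
\int_t^T\big\langle v(s),\tfrac12\Delta\varphi\big\rangle\,\di s=-\langle v_T,\varphi\rangle+\Big\langle P_{T-t}v_T+\int_t^T P_{s-t}H(s)\,\di s,\varphi\Big\rangle-\int_t^T\langle H(s),\varphi\rangle\,\di s.
\end{equation*}
The middle pairing equals $\langle v(t),\varphi\rangle$ by the mild identity, and substituting back into the (reformulated) weak identity makes every term cancel, so \eqref{eq:weak} holds.

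For the converse \emph{weak $\Rightarrow$ mild}, Remark \ref{rm:AC} gives $v\in\mathcal D_{\mathcal L}^0$, so $\dot v\in C([0,T];\mathcal S')$; differentiating \eqref{eq:weak} in $t$ and using self-adjointness of $\Delta$ yields the pointwise equation $\dot v(s)=-\tfrac12\Delta v(s)-H(s)$ in $\mathcal S'$. Fixing $t,\varphi$ I would study the scalar map $\Phi(s):=\langle v(s),P_{s-t}\varphi\rangle$ on $[t,T]$. Since $v\in C^1([0,T];\mathcal S')$, $s\mapsto P_{s-t}\varphi\in C^1([t,T];\mathcal S)$, and the dual pairing is continuous and bilinear, the product rule gives $\Phi'(s)=\langle\dot v(s),P_{s-t}\varphi\rangle+\langle v(s),\tfrac12\Delta P_{s-t}\varphi\rangle$; inserting the pointwise equation, the two Laplacian contributions cancel by self-adjointness and $\Phi'(s)=-\langle H(s),P_{s-t}\varphi\rangle=-\langle P_{s-t}H(s),\varphi\rangle$. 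Integrating over $[t,T]$ with $\Phi(T)=\langle P_{T-t}v_T,\varphi\rangle$ and $\Phi(t)=\langle v(t),\varphi\rangle$, and pulling the integral inside the pairing, gives $\langle v(t),\varphi\rangle=\big\langle P_{T-t}v_T+\int_t^T P_{s-t}H(s)\,\di s,\varphi\big\rangle$; as $\varphi$ is arbitrary this is exactly \eqref{eq:mild}.

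The algebraic cancellations are routine; the steps that require genuine care, and which I expect to be the main obstacle, are the analytic justifications in $\mathcal S'$: the product rule and fundamental theorem of calculus for the $\R$-valued maps $\Phi$ and $s\mapsto\langle v(s),\tfrac12\Delta\varphi\rangle$, and the Fubini interchange for the iterated Bochner integral. These rest on the joint continuity of Lemma \ref{lm:jointcontinuity}, the $C^1$-in-time regularity of Remark \ref{rm:AC}, and the mapping property $P_t:\mathcal S\to\mathcal S$, which together ensure that every integrand above is continuous in its time variables, so that all limits, integrals and pairings may be freely exchanged.
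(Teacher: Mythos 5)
Your proof of the implication mild $\Rightarrow$ weak is essentially the paper's: substitute the mild representation into $\int_t^T\langle v(s),\tfrac12\Delta\varphi\rangle\,\di s$, transfer the semigroup onto $\varphi$ by self-adjointness, use $\tfrac{\di}{\di s}P_s=\tfrac12\Delta P_s$, Fubini on the simplex, and the fundamental theorem of calculus. For the converse, however, you take a genuinely different route. The paper never differentiates $v$: it defines the Duhamel candidate $u(t):=P_{T-t}v_T+\int_t^T P_{s-t}\left(\nabla v(s)b(s)\right)\di s-\int_t^T P_{s-t}G(v)(s)\,\di s$, observes that $u$ is the mild --- hence, by part (i) applied with $\lambda=0$ and source $g=G(v)-\nabla v\,b$ --- also a weak solution of the corresponding heat equation with source, and then concludes that $\bar v=v-u$ is a weak solution of the heat equation with zero terminal condition, so $\bar v=0$ by uniqueness. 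You instead invoke Remark \ref{rm:AC} to get $\dot v\in C([0,T];\mathcal S')$, differentiate the weak identity to obtain $\dot v=-\tfrac12\Delta v-H$ pointwise in $\mathcal S'$, and integrate the scalar function $\Phi(s)=\langle v(s),P_{s-t}\varphi\rangle$ along the backward heat flow. Your version buys a self-contained argument that avoids the uniqueness statement for the distributional heat equation, which the paper's proof uses without detailed justification; the price is the product rule for the pairing $\mathcal S'\times\mathcal S$ along two $C^1$ curves, which you correctly flag as the delicate step and which does hold (Banach--Steinhaus gives equicontinuity of the weak-$*$ convergent difference quotients of $v$, which may then be paired against $\mathcal S$-convergent arguments, and $s\mapsto P_s\varphi$ is indeed $C^1$ in $\mathcal S$ with derivative $\tfrac12\Delta P_s\varphi$). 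Both arguments are correct: the paper's keeps all the analysis at the level of its part (i) plus heat-kernel uniqueness, while yours is the standard test-against-$P_{s-t}\varphi$ Duhamel argument, arguably more transparent and not reliant on a separate uniqueness result.
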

\begin{proof}
{\em (i) mild implies weak.} 
Let $v\in C_T D\mathcal C^{\beta} $ be a mild solution. For any $\varphi\in\mathcal S$ we have
\begin{align}\label{eq:mw}
\nonumber
\int_t^T \langle v(s),\frac12\Delta \varphi \rangle \di s 
= &\int_t^T \langle P_{T-s} v_T , \frac12\Delta \varphi \rangle \di s \\
&+ \int_t^T  \int_s^T \langle P_{r-s} \nabla v (r) b(r) , \frac12 \Delta \varphi \rangle \di r  \di s \\
& - \int_t^T  \int_s^T \langle P_{r-s} G(v)(r) ,\frac12 \Delta \varphi \rangle \di r  \di s . \nonumber
\end{align}
The first term on the RHS of \eqref{eq:mw} gives 
\begin{align*}
\int_t^T \langle P_{T-s} v_T, \frac12 \Delta \varphi \rangle \di s &= \int_t^T \langle  \frac12\Delta P_{T-s}  v_T , \varphi \rangle \di s\\
&= \int_0^{T-t} \langle  \frac{\di}{\di s} P_{s}  v_T, \varphi \rangle \di s\\
&= \langle P_{T-t} v_T, \varphi \rangle   -\langle v_T, \varphi \rangle.
\end{align*}
The second and third terms on the RHS of \eqref{eq:mw} give
\begin{align*}
\int_t^T  \int_s^T\langle P_{r-s} & [\nabla v(r) b(r) - G(v)(r)], \frac12\Delta \varphi \rangle \di r \di s  \\
= & \int_t^T  \int_0^{r-t} \langle P_{s} [\nabla v(r) b(r) -G(v)(r) ], \frac12 \Delta \varphi \rangle \di s \di r\\
 =& \int_t^T  \int_0^{r-t} \langle \frac{\di }{\di s} P_{s} [\nabla v(r) b(r) - G(v)(r)]  , \varphi \rangle \di s \di r\\
= &\int_t^{T} \langle   P_{r-t}  [\nabla v(r) b(r) - G(v)(r)], \varphi \rangle \di r \\
&-  \int_t^{T} \langle [\nabla v(r) b(r) -   G(v)(r)], \varphi \rangle \di r.
\end{align*}
 Putting these into \eqref{eq:mw}   we get
 \begin{align*}
\int_t^T \langle v(s), \frac12 \Delta \varphi \rangle \di s 
=& \langle P_{T-t} v_T, \varphi \rangle   -\langle v_T, \varphi \rangle +\int_t^{T} \langle   P_{r-t} [\nabla v(r) b(r) - G(v)(r)], \varphi \rangle \di r\\
&-  \int_t^{T} \langle [\nabla v(r) b(r) -  G(v)(r)], \varphi \rangle \di r\\
=&\langle v(t) , \varphi\rangle -\langle v_T, \varphi \rangle +  \int_t^{T} \langle  [\nabla v(r) b(r) -G(v)(r)], \varphi \rangle \di r
\end{align*}
 which shows that $v$ is also a weak solution.  

{\em (ii) weak implies mild.} We proceed as follows. Given a weak solution  $v\in  C_T D\mathcal C^{\beta} $  that satisfies \eqref{eq:weak} we define 
\begin{equation} \label{eq:uDef}
u(t) := P_{T-t} v_T + \int_t^T P_{s-t} \left(\nabla v(s) b(s) \right) \di s - \int_t^T P_{s-t}  G(v)(s)  \di s.
\end{equation}
We see that $u$ is a mild solution of the heat equation with extra source terms involving $v$, more specifically of  
\[
\partial_t u +\frac12\Delta u = G(v) - \nabla v \,b; \qquad u(T) = v_T.
\]
By using  (i) with $\lambda =0$ and $g= G(v) - \nabla v \,b$ we have that $u$ is  also a weak solution of the above PDE.  Now we take the difference $\bar v = v-u $ and see that $\bar v$ fulfills 
\[
\bar v (t, \cdot ) =  -\int_t^T \frac12 \Delta \bar v(s, \cdot)\di s; \qquad \bar v(T) = 0,
 \]
hence $\bar v$ is  a weak solution of the heat equation with zero terminal condition so we have $\bar v =0$, which implies that $u=v$ and so $u $ is a mild solution by  \eqref{eq:uDef}.
\end{proof}

\subsection{Linear growth solutions}
In this subsection we consider equation  \eqref{eq:PDE}  and pick a terminal condition  $v_T$ fulfilling   Assumption \ref{ass:PDEv}. 
We will show below  that solutions of \eqref{eq:PDE} exist  in the space $C_T D  \mathcal C^{(1-\beta)-}$ and are unique in the space $C_T D  \mathcal C^{\beta+}$. If furthermore the terminal condition is bounded (Assumption \ref{ass:PDE}) then the solution will also be bounded. 

Given  $\rho\geq 0$, we introduce an equivalent norm in $C_T D\mathcal C^\alpha$, respectively $C_T \mathcal C^{\alpha+1}$, defined as
\begin{equation}\label{eq:rho01}
\|f\|_{ C_T D\mathcal C^\alpha}^{(\rho)}:= \sup_{t\in[0,T]} e^{-\rho (T-t)} \left( |f(t,0)| + \|\nabla f(t)\|_{\alpha} \right),
\end{equation}
respectively 
\begin{equation}\label{eq:rho12}
\|f\|_{ C_T \mathcal C^{\alpha+1}}^{(\rho)}:= \sup_{t\in[0,T]} e^{-\rho (T-t)} \left( \sup_x |f(t,x)| + \|\nabla f(t)\|_{\alpha} \right).
\end{equation}
Notice that those norms are equivalent to those
defined in
\eqref{eq:normS} (resp. \eqref{eq:gamma12}). 
With these norms the pointwise products estimates corresponding to those from Remark \ref{rm:bonyt} will become, for $\alpha>\beta$,
\begin{equation}\label{eq:bonytrho}
\|fg\|_{C_T \mathcal C^{-\beta}}^{(\rho)} \leq c \|f\|^{(\rho)}_{C_T \mathcal C^{\alpha}} \|g\|_{C_T \mathcal C^{-\beta}}.
\end{equation}

We start with a preliminary result.
\begin{lemma}\label{lm:Ptnablawh}
 Let  $\ell\in C_T \mathcal C^{-\beta}$ and $\rho\geq 1$. Then for every $t\in [0,T]$ and for every $\alpha \in [\beta, 1-\beta)$
we have \begin{equation}\label{eq:Ptnablawh}
 \|\int_t^T  P_{s-t} \ell (s) \di s \|^{(\rho)}_{C_T \mathcal C^{\alpha+1}} \leq c \|\ell\|_{C_T \mathcal C^{-\beta} }^{(\rho)} \rho^{\frac{\alpha+\beta-1}2},
 \end{equation}
and in particular,
\[
\|\int_t^T  P_{s-t}\ell (s)  \di s \|^{(\rho)}_{C_T D\mathcal C^\alpha} \leq c \|\ell \|_{C_T \mathcal C^{-\beta} }^{(\rho)} \rho^{\frac{\alpha+\beta-1}2},
 \]
 where $c$ depends on $\alpha$ and $\beta$.
\end{lemma}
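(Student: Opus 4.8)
The plan is to reduce the whole estimate to Schauder's estimate (Lemma \ref{lm:schauder}) applied with one carefully chosen smoothing exponent, and then to use the exponential weight built into the $\rho$-norm to extract the decaying factor $\rho^{(\alpha+\beta-1)/2}$ through a Gamma-function computation rather than a crude bound on the time integral.

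First I would set
\[
\theta := \tfrac{\alpha+1+\beta}{2},
\]
which is exactly the exponent needed to lift $\ell(s)\in\mathcal C^{-\beta}$ up to $\mathcal C^{\alpha+1}$, since $-\beta + 2\theta = \alpha+1$. The hypothesis $\alpha\in[\beta,1-\beta)$ guarantees $\theta\in(0,1)$: the upper bound $\alpha<1-\beta$ gives $\theta<1$, while $\alpha\ge\beta>0$ gives $\theta>0$. Applying \eqref{eq:Pt} pointwise in $s$ then yields
\[
\|P_{s-t}\ell(s)\|_{\alpha+1} \leq c\,(s-t)^{-\theta}\|\ell(s)\|_{-\beta}, \qquad s>t,
\]
and since $\theta<1$ the singularity at $s=t$ is integrable, so the Bochner integral $\int_t^T P_{s-t}\ell(s)\di s$ is well-defined in $\mathcal C^{\alpha+1}$; its continuity in $t$ follows by a routine dominated-convergence argument (using the integrable bound above together with the continuity of $s\mapsto\ell(s)$ and Lemma \ref{lm:jointcontinuity}), analogous to Lemma \ref{lm:PtS}.

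The heart of the proof is the treatment of the weight. Multiplying by $e^{-\rho(T-t)}$, integrating over $s\in[t,T]$, and using the factorisation $e^{-\rho(T-t)}=e^{-\rho(s-t)}e^{-\rho(T-s)}$, I would bound $e^{-\rho(T-s)}\|\ell(s)\|_{-\beta}\le\|\ell\|^{(\rho)}_{C_T\mathcal C^{-\beta}}$ to obtain
\[
e^{-\rho(T-t)}\left\|\int_t^T P_{s-t}\ell(s)\di s\right\|_{\alpha+1}
\leq c\,\|\ell\|^{(\rho)}_{C_T\mathcal C^{-\beta}}\int_t^T (s-t)^{-\theta}e^{-\rho(s-t)}\di s.
\]
Substituting $u=s-t$, extending the integral to $[0,\infty)$, and then setting $v=\rho u$ gives $\int_0^\infty u^{-\theta}e^{-\rho u}\di u=\Gamma(1-\theta)\,\rho^{\theta-1}$, and since $\theta-1=\frac{\alpha+\beta-1}{2}$ this is precisely the claimed power of $\rho$. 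Recalling that $\|f(t)\|_{\alpha+1}$ is equivalent to $\sup_x|f(t,x)|+\|\nabla f(t)\|_\alpha$ by \eqref{eq:gamma12}, taking the supremum over $t$ yields the first inequality in the $\rho$-norm \eqref{eq:rho12}, with a constant depending on $\alpha,\beta$ through the Schauder constant and $\Gamma(1-\theta)$.

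Finally, the $D\mathcal C^\alpha$-estimate is immediate from the first one by comparing norms: from the definitions \eqref{eq:rho01} and \eqref{eq:rho12} one has $|f(t,0)|\le\sup_x|f(t,x)|$ while the gradient terms coincide, so $\|\cdot\|^{(\rho)}_{C_TD\mathcal C^\alpha}\le\|\cdot\|^{(\rho)}_{C_T\mathcal C^{\alpha+1}}$, and the second bound follows with the same constant. I expect the only delicate point to be the bookkeeping in the exponential splitting, so that the Gamma integral, rather than the crude estimate $\int_t^T(s-t)^{-\theta}\di s$, controls the time integral; this is exactly what produces the gain in $\rho$ that makes the estimate useful for the subsequent contraction arguments.
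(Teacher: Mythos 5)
Your proof is correct, and it rests on the same core mechanism as the paper's — Schauder's estimate combined with the exponential splitting $e^{-\rho(T-t)}=e^{-\rho(s-t)}e^{-\rho(T-s)}$ and the Gamma-function computation $\int_0^\infty u^{-\theta}e^{-\rho u}\,\di u=\Gamma(1-\theta)\rho^{\theta-1}$ — but you organise the Schauder step differently. The paper bounds the two pieces of the norm \eqref{eq:rho12} separately: the sup-term via \eqref{eq:Pt} with smoothing exponent $\frac{\alpha+\beta}{2}$, which yields the power $\rho^{\frac{\alpha+\beta-2}{2}}$, and the gradient term via Lemma \ref{lm:nablaP} (i.e.\ Schauder plus Bernstein) with exponent $\frac{\alpha+\beta+1}{2}$, which yields $\rho^{\frac{\alpha+\beta-1}{2}}$; it then merges the two powers using the hypothesis $\rho\geq 1$. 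You instead apply \eqref{eq:Pt} a single time, at the level of the full $\mathcal C^{\alpha+1}$-norm with the one exponent $\theta=\frac{\alpha+\beta+1}{2}$, recovering both terms of \eqref{eq:rho12} through the norm equivalence \eqref{eq:gamma12}. This is marginally lossier on the sup-part (you get $\rho^{\frac{\alpha+\beta-1}{2}}$ where the paper's split gives the better power $\rho^{\frac{\alpha+\beta-2}{2}}$), but since the gradient term dominates anyway the final estimate is identical; as a small bonus, your one-shot version needs neither Lemma \ref{lm:nablaP} nor the assumption $\rho\geq 1$, which the paper invokes only to merge the two powers. Your closing reduction of the $D\mathcal C^\alpha$-bound to the $\mathcal C^{\alpha+1}$-bound via $|f(t,0)|\leq\sup_x|f(t,x)|$ is exactly the paper's opening observation, and your verification that $\theta\in(0,1)$ under $\alpha\in[\beta,1-\beta)$ is the correct integrability check.
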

\begin{proof}
We recall that for $f \in C_T \mathcal C^{\alpha+1}$ then $f \in C_T D \mathcal C^{\alpha}$ and $\|f\|_{ C_T D \mathcal C^{\alpha}} \leq \|f\|_{ C_T \mathcal C^{\alpha+1}} $ by   \eqref{eq:rho01}  and  \eqref{eq:rho12}.  For this reason, we will only prove \eqref{eq:Ptnablawh}.
We bound  each term in the $\rho $-equivalent norm \eqref{eq:rho12} in
  $\mathcal C^{\alpha+1}$ separately.
 Let us denote by $f(t,x):= \int_t^T \left( P_{s-t} \ell (s) \right) (x) \di s $.
The  sup term  in \eqref{eq:rho12}
 gives 
\begin{align*}
 \sup_x |f(t,x)|
& = \sup_{x}  \left | \int_t^T \left( P_{s-t}  \ell(s) \right) (x) \di s\right |\\
& = \left \| \int_t^T  P_{s-t}  \ell(s)  \di s\right\|_\infty\\
& \leq  \left  \| \int_t^T  P_{s-t}  \ell(s) \di s \right\|_\alpha\\
& \leq  \int_t^T   \left  \|P_{s-t}  \ell(s) \right\|_\alpha  \di s\\
& \leq c  \int_t^T   (s-t)^{-\frac{\alpha+\beta}2}  \|  \ell(s) \|_{ -\beta} \di s,
\end{align*}
having used \eqref{eq:Pt} from Lemma \ref{lm:schauder}.
Now multiplying by $e^{-\rho (T-t)}$ and taking the supremum over $t$, using \eqref{eq:rho12} 
 we get
\begin{align}\label{eq:rhots} \nonumber 
 \sup_{t\in [0,T]} &e^{-\rho (T-t)} \sup_x  |f(t,x)| \\ \nonumber 
  &\leq c    \sup_{t\in [0,T]}  \int_t^T e^{-\rho (s-t)}  (s-t)^{-\frac{\alpha+\beta}{2}}  e^{-\rho (T-s)}\| \ell(s) \|_{-\beta}  \di s\\ 
 &  \leq c \|\ell \|^{(\rho)}_{C_T \mathcal C^{-\beta}}  \sup_{t\in [0,T]} \int_t^T e^{-\rho (s-t)}      (s-t)^{-\frac{\alpha+\beta}{2}} \di s.
 \end{align}
The latter  integral can be bounded noting that  $\theta := \frac{\alpha+\beta}{2}<1$ by choice of $\alpha$, thus 
 \begin{align*} 
 \int_t^T e^{-\rho (s-t)}     (s-t)^{-\theta} \di s 
 & \leq \int_0^{\infty} e^{-s\rho} s^{-\theta} \di s \\
 & \leq \int_0^{\infty} e^{-x} x^{-\theta} \rho^{-1+\theta} \di x \\
  & = \Gamma(-\theta+1)  \rho^{-1+\theta},
 \end{align*} 
 where \begin{equation}\label{eq:defGamma}
\Gamma(\eta):=\int_0^{\infty} e^{-x} x^{\eta-1}  \di x 
 \end{equation}
 denotes the Gamma function. 
Thus \eqref{eq:rhots} gives 
\begin{equation}\label{eq:A2} 
 \sup_{t\in [0,T]} e^{-\rho (T-t)}  \sup_x|f(t,x) |   \leq c  \|\ell\|^{(\rho)}_{C_T \mathcal C^{-\beta}} \rho^{\frac{\alpha + \beta -2}{2}},
\end{equation}
where 
$c$ depends on $\alpha$ and $\beta$.

The term with the $\alpha$-norm 
of $\nabla f$  in \eqref{eq:rho12} is bounded 
 with similar computations as above but using \eqref{eq:nablaP} in place of \eqref{eq:Pt}
to get
\begin{align*}
\sup_{t\in [0,T]}& e^{-\rho (T-t)}\|\nabla f(t) \|_\alpha \\
& \leq  c\sup_{t\in [0,T]} e^{-\rho (T-t)} \int_t^T  (s-t)^{-\frac{\alpha+\beta+1}{2}}  \|  \ell(s) \|_{-\beta}  \di s .
\end{align*}
Proceeding as between \eqref{eq:rhots} and \eqref{eq:A2}  
 and using the fact that $\frac{\alpha+\beta+1}{2}<1$,
 we get 
\begin{align} \label{eq:AA1}
\sup_{t\in [0,T]} e^{-\rho (T-t)} \|\nabla f(t)\|_\alpha  
  &   \leq c \|\ell\|^{(\rho)}_{ C_T \mathcal C^{-\beta
}} \rho^{ \frac{\alpha + \beta -1}{2}}.
 \end{align} 
Combining   \eqref{eq:A2} and  \eqref{eq:AA1}, and using the fact that $\rho^{ \frac{\alpha + \beta -2}{2}} \leq \rho^{ \frac{\alpha + \beta -1}{2}}$ since $\rho\geq1$, we conclude.
 \end{proof}

\begin{theorem}\label{thm:PDElambda0}
 Let $b$ satisfy Assumption \ref{ass:param-b}.
 \begin{itemize}
 \item[(i)]  Let $v_T$ and $g$ satisfy Assumption \ref{ass:PDEv}. 
 Then there exists a  mild solution $v$ to \eqref{eq:PDE} in $C_T D  \mathcal C^{(1-\beta)-}$ which is  unique in $C_T D  \mathcal C^{\beta}$.
\item[(ii)] Let $v_T$ and $g$ satisfy Assumption \ref{ass:PDE} (in particular
  $v_T$ is bounded). Then the unique mild solution $v$ of PDE \eqref{eq:PDE} is also bounded, more precisely $v\in C_T \mathcal C^{(2- \beta)-}$.
 \end{itemize}
\end{theorem}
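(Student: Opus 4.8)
The plan is to realise the mild solution as the fixed point of the affine map
\[
\mathcal I(v)(t) := P_{T-t}v_T + \int_t^T P_{s-t}\big(\nabla v(s)\,b(s)\big)\,\di s - \int_t^T P_{s-t}\big(\lambda v(s)+g(s)\big)\,\di s,
\]
and to run a Banach fixed point argument in $C_T D\mathcal C^\alpha$ using the family of $\rho$-equivalent norms, choosing $\rho$ large so as to make the contraction constant small. Throughout I fix $\alpha\in[\beta,1-\beta)$, a nonempty range because $\beta<\tfrac12$. I would first check that $\mathcal I$ maps $C_T D\mathcal C^\alpha$ into itself, treating the three contributions by the different results of Section \ref{sc:semigroup}: the term $P_{T-t}v_T$ lies in $C_T D\mathcal C^\alpha$ by Lemma \ref{lm:PtDC}(ii) (since $v_T\in D\mathcal C^{(1-\beta)-}$, choose $\nu>0$ with $\alpha+\nu<1-\beta$ so $v_T\in D\mathcal C^{\alpha+\nu}$); the drift integral makes sense because, for $v\in C_T D\mathcal C^\alpha\subset C_T D\mathcal C^\beta$ and $b\in C_T\mathcal C^{-\beta+\eps}$, the pointwise product satisfies $\nabla v\,b\in C_T\mathcal C^{(-\beta)+}\subset C_T\mathcal C^{-\beta}$ via \eqref{eq:bonyt}, so that $\int_\cdot^T P_{s-\cdot}(\nabla v\,b)\,\di s\in C_T\mathcal C^{1+\alpha}\subset C_T D\mathcal C^\alpha$ by Lemma \ref{lm:PtS} (after the time reversal $t\mapsto T-t$); the $g$-integral is handled the same way, and the $\lambda v$-integral by Lemma \ref{lm:PtDC2}(ii).

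The heart of the argument is the contraction estimate. Since $\mathcal I$ is affine, for $v_1,v_2\in C_T D\mathcal C^\alpha$ the difference $\mathcal I(v_1)-\mathcal I(v_2)$ involves only the two linear integrals applied to $w:=v_1-v_2$. For the drift term I would bound, using \eqref{eq:bonytrho} and then Lemma \ref{lm:Ptnablawh},
\[
\Big\|\int_\cdot^T P_{s-\cdot}(\nabla w\, b)\,\di s\Big\|^{(\rho)}_{C_T D\mathcal C^\alpha}\le c\,\rho^{\frac{\alpha+\beta-1}2}\,\|\nabla w\,b\|^{(\rho)}_{C_T\mathcal C^{-\beta}}\le c\,\|b\|_{C_T\mathcal C^{-\beta}}\,\rho^{\frac{\alpha+\beta-1}2}\,\|w\|^{(\rho)}_{C_T D\mathcal C^\alpha},
\]
and for the zero-order term, using Lemma \ref{lm:PtD} and $\int_t^T e^{-\rho(s-t)}\di s\le\rho^{-1}$,
\[
\Big\|\lambda\int_\cdot^T P_{s-\cdot}w(s)\,\di s\Big\|^{(\rho)}_{C_T D\mathcal C^\alpha}\le c\,|\lambda|\,\rho^{-1}\,\|w\|^{(\rho)}_{C_T D\mathcal C^\alpha}.
\]
Because $\alpha+\beta-1<0$ (as $\alpha<1-\beta$), both prefactors $\rho^{(\alpha+\beta-1)/2}$ and $\rho^{-1}$ tend to $0$ as $\rho\to\infty$, so there is $\rho$ large enough that $\mathcal I$ is a contraction on $(C_T D\mathcal C^\alpha,\|\cdot\|^{(\rho)})$; Banach's theorem then yields a unique fixed point $v^{(\alpha)}\in C_T D\mathcal C^\alpha$, which is the sought mild solution. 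I expect the verification that the product lands in $\mathcal C^{-\beta}$ with the correct $\rho$-norm bound — that is, threading Bony's estimate \eqref{eq:bonytrho} through the semigroup estimate of Lemma \ref{lm:Ptnablawh} that produces the crucial gain $\rho^{(\alpha+\beta-1)/2}$ — to be the most delicate bookkeeping, and it is exactly where $\beta<\tfrac12$ is used.

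To conclude (i), I would take $\alpha=\beta$ to obtain uniqueness directly in $C_T D\mathcal C^\beta$, and observe that for every $\alpha\in[\beta,1-\beta)$ the fixed point $v^{(\alpha)}$ is also a mild solution in $C_T D\mathcal C^\beta$ (via the embedding $D\mathcal C^\alpha\subset D\mathcal C^\beta$); by the uniqueness just proved, all the $v^{(\alpha)}$ coincide with a single solution $v$, so $v\in\bigcap_{\alpha<1-\beta}C_T D\mathcal C^\alpha=C_T D\mathcal C^{(1-\beta)-}$. For (ii), since $\mathcal C^{(2-\beta)-}\subset D\mathcal C^{(1-\beta)-}$, Assumption \ref{ass:PDE} implies Assumption \ref{ass:PDEv}, so the solution $v$ from (i) is well defined; I would then rerun the identical fixed point argument in $C_T\mathcal C^{\alpha+1}$ for each $\alpha\in[\beta,1-\beta)$ — now invoking Lemma \ref{lm:PtDC}(i), Lemma \ref{lm:PtDC2}(i) and again Lemma \ref{lm:Ptnablawh}/\ref{lm:PtS}, with the boundedness of $P_{T-t}v_T$ coming from $v_T\in\mathcal C^{(2-\beta)-}$ — to obtain a fixed point in $C_T\mathcal C^{\alpha+1}$. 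This fixed point is a mild solution lying in $C_T D\mathcal C^\beta$, hence equals $v$ by the uniqueness of (i); letting $\alpha\uparrow 1-\beta$ gives $v\in C_T\mathcal C^{(2-\beta)-}$, in particular bounded.
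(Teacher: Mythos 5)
Your proposal is correct and follows essentially the same route as the paper: the same solution map, stability via Lemmata \ref{lm:PtDC}, \ref{lm:PtS} and \ref{lm:PtDC2}, contraction in the $\rho$-equivalent norms via \eqref{eq:bonytrho} and Lemma \ref{lm:Ptnablawh} with the $\lambda$-term absorbed by $\rho^{-1}$, and the endpoint $\alpha=\beta$ used for uniqueness in the larger space while intersecting over $\alpha<1-\beta$ for existence. One small caveat: your contraction display with the factor $\|b\|_{C_T\mathcal C^{-\beta}}$ is only justified for $\alpha>\beta$, since Bony's estimate \eqref{eq:bonytrho} needs strictly positive total regularity; at $\alpha=\beta$ you must (as the paper does, and as your own stability paragraph already sets up via $b\in C_T\mathcal C^{-\beta+\eps}$) replace it by $\|b\|_{C_T\mathcal C^{-\beta+\eps}}$ and the exponent by $\rho^{\frac{2\beta-\eps-1}{2}}$, which still vanishes as $\rho\to\infty$ because $\beta<\tfrac12$.
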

\begin{remark}\label{rm:PDElambda0}
 One could  relax Assumption  \ref{ass:PDEv} (resp.\ Assumption \ref{ass:PDE}) for $v_T$ and only ask that $v_T \in  D\mathcal C^{\beta+} $ (resp. $v_T \in  \mathcal C^{(1+\beta)+} $). In this case the unique solution would no longer  belong to $C_T D  \mathcal C^{(1-\beta)-}$ (resp.\ $C_T \mathcal C^{(2-\beta)-}$) but only to $C_T D  \mathcal C^{\beta+}$ (resp.\ $C_T  \mathcal C^{(1+\beta)+}$).
\end{remark} 
\begin{proof}[Proof of Theorem \ref{thm:PDElambda0}]
We start with an arbitrary $\alpha\in (\beta, 1-\beta)$. The case $\alpha=\beta$ will be explained at the end of the proof. 
Let $\mathcal T$ denote the solution operator, namely for $v\in C([0,T]; D\mathcal C^\alpha)$ we define $\mathcal T v$ as 
\begin{equation}\label{eq:Tau}
\mathcal T v(t):= P_{T-t} v_T+ \int_t^T P_{s-t} \left(\nabla v(s) b(s) \right) \di s - \int_t^T P_{s-t} (\lambda v(s) + g(s)) \di s.
\end{equation}
We prove both items of the theorem in two steps, first showing stability and then the contraction property. Notice that Assumption \ref{ass:PDE} implies Assumption \ref{ass:PDEv}.

\textbf{Step 1  - stability.} 
We suppose Assumption \ref{ass:PDEv} (resp.\ Assumption \ref{ass:PDE}).
We show that  $\mathcal T : C_T D\mathcal C^\alpha \to C_T D\mathcal C^\alpha$ (resp.\  $\mathcal T : C_T \mathcal C^{\alpha+1} \to C_T \mathcal C^{\alpha+1}$). \\
The term $P_{T-t} v_T \in D\mathcal C^\alpha$ (resp.\ $P_{T-t} v_T \in \mathcal C^{\alpha+1}$)  is continuous in $t$ by Lemma \ref{lm:PtDC}, item (ii)
(resp.\ item (i)) since $v_T\in D\mathcal C^{\alpha+\nu}$ (resp.\  $v_T\in \mathcal C^{1+\alpha+\nu}$) for all $\nu>0$ such that $\alpha+\nu<1-\beta$ by Assumption \ref{ass:PDEv} (resp.\ Assumption \ref{ass:PDE}).\\ 
Since $v\in C_T D\mathcal C^\alpha$  (resp.\ $v\in C_T \mathcal C^{\alpha+1}$) and $b \in C_T \mathcal C^{(-\beta)+}$, then by Remark \ref{rm:bonyt} $\nabla v b\in C_T\mathcal C^{(-\beta)+}$. 
Moreover $ g\in C_T \mathcal C^{(-\beta)+}$
 by assumption. 
Thus we can apply Lemma \ref{lm:PtS} to deduce that $\int_\cdot^T P_{s-\cdot} \left(\nabla v(s) b(s) \right) \di s + \int_\cdot^T P_{s-\cdot} g(s) \di s \in   C_T \mathcal C^{\alpha+1} \subset C_T D\mathcal C^\alpha  $.

Finally  by Lemma \ref{lm:PtDC2} item (ii)  (resp. item (i)), $t\mapsto \int_t^T P_{s-t} \lambda v(s) \di s$ is continuous with values in $ D\mathcal C^\alpha$ (resp. $\mathcal C^{\alpha+1}$).

\textbf{Step 2 - contraction.} Next we show that $\mathcal T$ is a contraction in $  C_T D\mathcal C^\alpha$ (resp.\ $ C_T \mathcal C^{\alpha+1}$).

To this aim it is convenient to use the equivalent norm in $C_T D\mathcal C^\alpha$ (resp.\ $ C_T \mathcal C^{\alpha+1}$) introduced in \eqref{eq:rho01} (resp.\ \eqref{eq:rho12}).
Let $v_1, v_2 \in  C_T D\mathcal C^\alpha $ (resp. $v_1, v_2 \in  C_T \mathcal C^{\alpha+1} $). Then 
\begin{align} \nonumber
\mathcal Tv_1(t) - \mathcal Tv_2(t) =& \int_t^T  P_{s-t} \left((\nabla v_1(s)-\nabla v_2(s) ) b(s) \right) \di s \\ \nonumber
&+ \lambda \int_t^T  P_{s-t} ( v_1(s)- v_2(s) )  \di s \\
\label{eq:BB}
&=: B_1(t) + B_2(t).
\end{align}
We consider $B_1$ first. By Lemma \ref{lm:Ptnablawh} with $\ell= \nabla (v_1-v_2)b$ and using \eqref{eq:bonytrho} we get
\begin{align}\label{eq:Te1}
\nonumber
\|B_1 \|^{(\rho)}_{C_T D\mathcal C^\alpha} 
&=
\|\int_t^T  P_{s-t} \left(\nabla (v_1-v_2)(s)  b(s) \right) \di s \|^{(\rho)}_{C_T D\mathcal C^\alpha}\\ \nonumber 
&\leq c \|\nabla (v_1-v_2)b\|_{C_T \mathcal C^{-\beta} }^{(\rho)} \rho^{\frac{\alpha+\beta-1}2}  \\ \nonumber 
&\leq  c \|b\|_{C_T \mathcal C^{-\beta} } \|\nabla (v_1-v_2)\|^{(\rho)}_{ C_T \mathcal C^\alpha} \rho^{\frac{\alpha+\beta-1}2},
\\
&\leq c \|b\|_{C_T \mathcal C^{-\beta} } \|v_1-v_2\|^{(\rho)}_{ C_T D\mathcal C^\alpha} \rho^{\frac{\alpha+\beta-1}2},
\end{align}
 respectively 
 \begin{equation}\label{eq:Te1ii}
 \|B_1 \|^{(\rho)}_{C_T \mathcal C^{\alpha+1}}  \leq c \|b\|_{C_T \mathcal C^{-\beta} } \|v_1-v_2\|^{(\rho)}_{ C_T \mathcal C^{\alpha+1}} \rho^{\frac{\alpha+\beta-1}2}.
 \end{equation}

We now bound $B_2$ in \eqref{eq:BB}. We use Lemma \ref{lm:PtD} (resp.\ Schauder's estimate \eqref{eq:Pt} with $\theta=0$) to get  
\begin{align}
\nonumber
  \|B_2\|_{C_T D \mathcal C^\alpha}^{(\rho)} & =
  \sup_{t\in[0,T]} e^{-\rho(T-t)}\| \lambda \int_t^T  P_{s-t} ( v_1(s)- v_2(s) )
    \di s\|_{D \mathcal C^\alpha}\\ \nonumber 
& \leq  \lambda  \sup_{t\in[0,T]} \int_t^T e^{-\rho(T-t)}  \| P_{s-t} ( v_1(s)- v_2(s) ) \|_{D \mathcal C^\alpha}  \di s\\ \nonumber 
& \leq  \lambda \sup_{t\in[0,T]} \int_t^T  e^{-\rho(s-t)}  c e^{-\rho(T-s)}  \|  v_1(s)- v_2(s)  \|_{D \mathcal C^\alpha}  \di s\\ \nonumber
& \leq c   \lambda  \sup_{t\in[0,T]}  \int_t^T  e^{-\rho(s-t)}   \|  v_1- v_2  \|^{(\rho)}_{C_T D \mathcal C^\alpha}  \di s\\ \nonumber
&\leq c  \lambda\|  v_1- v_2  \|^{(\rho)}_{C_T D \mathcal C^\alpha}  \rho^{-1}\\
& \leq c \lambda\|  v_1- v_2  \|^{(\rho)}_{C_T D \mathcal C^\alpha}  \rho^{\frac{\alpha+\beta-1}2}, \label{eq:Te3}
\end{align}
respectively 
\begin{equation}\label{eq:Te3ii}
\|B_2\|_{ \mathcal C^{\alpha+1}}^{(\rho)} \leq c\lambda \|  v_1- v_2  \|^{(\rho)}_{C_T \mathcal C^{\alpha+1}}  \rho^{\frac{\alpha+\beta-1}2}.
\end{equation}
  Combining \eqref{eq:Te1}  and \eqref{eq:Te3} (resp.\ \eqref{eq:Te1ii} and \eqref{eq:Te3ii}) and plugging them in \eqref{eq:BB} we get
 \begin{equation} \label{eq:stimarho1}
 \| \mathcal Tv_1 -  \mathcal Tv_2 \|_{ C_T D\mathcal C^\alpha}^{(\rho)} \leq c(\lambda+ \|b\|_{C_T \mathcal C^{-\beta} } ) \rho^{\frac{\alpha+\beta-1}2} \| v_1 - v_2 \|_{ C_TD\mathcal C^\alpha}^{(\rho)},
 \end{equation}
 respectively 
 \begin{equation} \label{eq:stimarho2}
  \|\mathcal Tv_1 -  \mathcal Tv_2 \|_{ C_T \mathcal C^{\alpha+1}}^{(\rho)} \leq c(\lambda+ \|b\|_{C_T \mathcal C^{-\beta} } ) \rho^{\frac{\alpha+\beta-1}2} \| v_1 - v_2 \|_{ C_T \mathcal C^{\alpha+1}}^{(\rho)}. 
 \end{equation}
 Now choosing  $\rho$ large enough
 so that (recalling that $\frac{\alpha+\beta-1}2<0$)
\begin{equation}\label{eq:rhob}
 c(\lambda+ \|b\|_{C_T \mathcal C^{-\beta} } ) \rho^{\frac{\alpha+\beta-1}2}  \leq \frac12,
\end{equation}
we get  
 \begin{equation}
\label{eq:fix}
 \| \mathcal Tv_1 -  \mathcal Tv_2 \|_{ C_T D\mathcal C^\alpha}^{(\rho)}  \leq \frac12  \| v_1 - v_2 \|_{ C_TD\mathcal C^\alpha}^{(\rho)},
 \end{equation}
 respectively 
 \begin{equation}
\label{eq:fixi}
 \| \mathcal Tv_1 -  \mathcal Tv_2 \|_{ C_T \mathcal {C}^{\alpha+1}}^{(\rho)}  \leq \frac12  \| v_1 - v_2 \|_{ C_T \mathcal C^{\alpha+1}}^{(\rho)},
 \end{equation}
 for all $v_1, v_2 \in   C_T D\mathcal C^\alpha $ (resp.\ $v_1, v_2 \in C_T \mathcal C^{\alpha+1}$). By Banach fixed point theorem we conclude that there exists a unique fixed point $v\in C_T D\mathcal C^\alpha$ (resp.\ in  $C_T \mathcal C^{\alpha+1}$) of $\mathcal T$, which is the  unique mild solution $v\in C_T D\mathcal C^\alpha$ to \eqref{eq:PDE}.
 
Since this is true for all $\alpha \in (\beta, 1-\beta)$, then under Assumption \ref{ass:PDE} existence holds in the smaller space $C_T D  \mathcal C^{(1-\beta)-}$. 
At this point we observe that we can choose $\alpha=\beta $ in all computations above,  but one must replace  $\|b\|_{C_T \mathcal C^{-\beta}}$ with  $\|b\|_{C_T \mathcal C^{-\beta+\eps}}$ 
for some small $\eps$ such that $2\beta-\eps+1>0$, and the powers $ \rho^{\frac{\alpha+\beta-1}2}$   must be  replaced by  $ \rho^{\frac{2\beta-\eps-1}2}$. In conclusion \eqref{eq:fix} and \eqref{eq:fixi} still hold for $\alpha=\beta$, hence   and uniqueness holds in the larger space $C_T D  \mathcal C^{\beta}$, 
   which proves item (i). 
  Moreover when Assumption  \ref{ass:PDEv} holds then  the unique solution belongs to $ C_T  \mathcal C^{(2-\beta)-}$.
\end{proof}

\begin{lemma}\label{lm:tauv}
Let $b$ satisfy Assumption \ref{ass:param-b}, $v_T$ and $g$ satisfy Assumption \ref{ass:PDE} and let $\lambda >0$.
Let $\alpha \in (\beta, 1-\beta)$  such that $v_T \in  \mathcal C^{\alpha+1}$.  Let $v$ be the unique solution of \eqref{eq:PDE} given in Theorem  \ref{thm:PDElambda0} item (ii) and Remark \ref{rm:PDElambda0}. Then there exists an  increasing function $R_\lambda$ such that 
\[
\| v  \|_{ C_T \mathcal C^{\alpha+1}}  \leq R_\lambda( \| b\|_{ C_T \mathcal C^{-\beta}}  ) ( \| v_T  \|_{ \mathcal C^{\alpha+1}} + \|g\|_{C_T \mathcal C^{-\beta}}).
\]
\end{lemma}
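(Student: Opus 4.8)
The plan is to exploit the mild formulation together with the $\rho$-equivalent norm machinery already assembled for the proof of Theorem \ref{thm:PDElambda0}(ii), and to track carefully how the choice of $\rho$ (hence the equivalence constant) depends on $\|b\|_{C_T\mathcal C^{-\beta}}$. Since $v$ is the mild solution under Assumption \ref{ass:PDE} with $v_T\in\mathcal C^{\alpha+1}$ and $\alpha\in(\beta,1-\beta)$, it is a fixed point $v=\mathcal Tv$ of the operator \eqref{eq:Tau}, and it lies in $C_T\mathcal C^{\alpha+1}$, so $\|v\|_{C_T\mathcal C^{\alpha+1}}$ is a priori finite. I would decompose $\mathcal T$ into its affine pieces, writing $\mathcal Tv=c_0+Lv$, where $c_0(t):=P_{T-t}v_T-\int_t^T P_{s-t}g(s)\,\di s$ gathers the inhomogeneous terms and $Lv(t):=\int_t^T P_{s-t}(\nabla v(s)\,b(s))\,\di s-\lambda\int_t^T P_{s-t}v(s)\,\di s$ is the part linear in $v$.

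The linear part requires no new estimate: since $Lv=\mathcal Tv-\mathcal T0$, the bounds \eqref{eq:Te1ii} and \eqref{eq:Te3ii} (taken with $v_1=v$, $v_2=0$) apply and combine, as in \eqref{eq:stimarho2}, to give
\[
\|Lv\|^{(\rho)}_{C_T\mathcal C^{\alpha+1}}\leq c(\lambda+\|b\|_{C_T\mathcal C^{-\beta}})\,\rho^{\frac{\alpha+\beta-1}2}\,\|v\|^{(\rho)}_{C_T\mathcal C^{\alpha+1}}.
\]
For $c_0$, the first piece is handled by Schauder's estimate \eqref{eq:Pt} with $\theta=0$ and $\gamma=\alpha+1$, yielding $\|P_{T-t}v_T\|_{\alpha+1}\leq c\|v_T\|_{\alpha+1}$ uniformly in $t$ and hence $\leq c\|v_T\|_{\alpha+1}$ in the norm \eqref{eq:rho12} because $e^{-\rho(T-t)}\leq1$; the second piece is controlled directly by Lemma \ref{lm:Ptnablawh} with $\ell=g\in C_T\mathcal C^{-\beta}$, giving $\|\int_\cdot^T P_{s-\cdot}g(s)\,\di s\|^{(\rho)}_{C_T\mathcal C^{\alpha+1}}\leq c\|g\|^{(\rho)}_{C_T\mathcal C^{-\beta}}\rho^{\frac{\alpha+\beta-1}2}\leq c\|g\|_{C_T\mathcal C^{-\beta}}$, using $\rho\geq1$ and $\frac{\alpha+\beta-1}2<0$. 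Thus $\|c_0\|^{(\rho)}_{C_T\mathcal C^{\alpha+1}}\leq c(\|v_T\|_{\alpha+1}+\|g\|_{C_T\mathcal C^{-\beta}})$.

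Choosing $\rho=\rho(\lambda,\|b\|_{C_T\mathcal C^{-\beta}})$ exactly as in \eqref{eq:rhob}, so that $c(\lambda+\|b\|_{C_T\mathcal C^{-\beta}})\rho^{\frac{\alpha+\beta-1}2}\leq\tfrac12$, the identity $v=c_0+Lv$ gives $\|v\|^{(\rho)}_{C_T\mathcal C^{\alpha+1}}\leq\tfrac12\|v\|^{(\rho)}_{C_T\mathcal C^{\alpha+1}}+c(\|v_T\|_{\alpha+1}+\|g\|_{C_T\mathcal C^{-\beta}})$; since $\|v\|^{(\rho)}_{C_T\mathcal C^{\alpha+1}}$ is finite it can be absorbed, leaving $\|v\|^{(\rho)}_{C_T\mathcal C^{\alpha+1}}\leq 2c(\|v_T\|_{\alpha+1}+\|g\|_{C_T\mathcal C^{-\beta}})$.

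The last step is to return to the unweighted norm, and this is the only genuinely delicate point. From $e^{-\rho T}\|v\|_{C_T\mathcal C^{\alpha+1}}\leq\|v\|^{(\rho)}_{C_T\mathcal C^{\alpha+1}}$ one gets $\|v\|_{C_T\mathcal C^{\alpha+1}}\leq 2c\,e^{\rho T}(\|v_T\|_{\alpha+1}+\|g\|_{C_T\mathcal C^{-\beta}})$. The admissible $\rho$ from \eqref{eq:rhob} may be taken to be $\rho=\max\{1,[2c(\lambda+\|b\|_{C_T\mathcal C^{-\beta}})]^{2/(1-\alpha-\beta)}\}$, which for fixed $\lambda>0$ is an increasing function of $\|b\|_{C_T\mathcal C^{-\beta}}$; consequently $R_\lambda(\cdot):=2c\,e^{\rho(\lambda,\cdot)T}$ is increasing in $\|b\|_{C_T\mathcal C^{-\beta}}$, which is the asserted bound. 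The essential subtlety to flag is precisely that the equivalence constant between $\|\cdot\|^{(\rho)}$ and $\|\cdot\|$ is $e^{\rho T}$, so the whole dependence of $R_\lambda$ on $\|b\|_{C_T\mathcal C^{-\beta}}$ is funnelled through the contraction threshold $\rho$; everything else is a direct reuse of the estimates behind the fixed-point argument.
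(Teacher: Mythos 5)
Your proposal is correct and follows essentially the same route as the paper's proof: your splitting $\mathcal Tv = c_0 + Lv$ is exactly the paper's bound $\|\mathcal Tv\|^{(\rho)} \leq \|\mathcal Tv-\mathcal T0\|^{(\rho)} + \|\mathcal T0\|^{(\rho)}$ with $c_0=\mathcal T0$, and you invoke the same estimates (\eqref{eq:Te1ii}, \eqref{eq:Te3ii} with $v_2=0$, Schauder with $\theta=0$, and Lemma \ref{lm:Ptnablawh} for the $g$-term), the same choice $\rho = [2c(\lambda+\|b\|_{C_T\mathcal C^{-\beta}})]^{1/\theta}$ with $\theta=\frac{1-\alpha-\beta}{2}$, the same absorption of $\tfrac12\|v\|^{(\rho)}$, and the same return to the unweighted norm via the factor $e^{\rho T}$, yielding an increasing $R_\lambda$. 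Your explicit remarks that $\|v\|^{(\rho)}_{C_T\mathcal C^{\alpha+1}}$ is a priori finite (justifying the absorption) and that the entire dependence on $\|b\|_{C_T\mathcal C^{-\beta}}$ is channelled through $\rho$ are sound and match the paper's argument, which concludes with $R_\lambda(x)=2\exp\{[2c(\lambda+x)]^{1/\theta}T\}\max\{1,\tfrac1\lambda\}$.
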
 
\begin{proof}
For the map $\mathcal T$  defined in \eqref{eq:Tau} 
we have
\begin{align*}
 \| \mathcal Tv  \|_{ C_T \mathcal C^{\alpha+1}}^{(\rho)} &\leq  \| \mathcal Tv  - \mathcal T0\|_{ C_T \mathcal C^{\alpha+1}}^{(\rho)} +\| \mathcal T0\|_{ C_T \mathcal C^{\alpha+1}}^{(\rho)}.
\end{align*}
Using  \eqref{eq:stimarho2} with  $v_1 =v$ and $v_2 =0$   we get
\[
\| \mathcal Tv  - \mathcal T0\|_{ C_T \mathcal C^{\alpha+1}}^{(\rho)} \leq c(\lambda + \| b\|_{ C_T \mathcal C^{-\beta}} ) \rho^{-\theta} \| v  \|_{ C_T \mathcal C^{\alpha+1}}^{(\rho)},
\]
where $\theta =\frac{1-\alpha-\beta}2 >0$. 
On the other hand 
\[
\mathcal T0 = P_{T-t} v_T+ \int_t^T P_{s-t} g(s) \di s,
\]
so using Lemma \ref{lm:Ptnablawh}
\begin{align*}
\| \mathcal T0\|_{ C_T \mathcal C^{\alpha+1}}^{(\rho)} &\leq \|  P_{T-t}v_T\|_{ C_T \mathcal C^{\alpha+1}}^{(\rho)} +  \|\int_\cdot^T P_{s-\cdot} g(s) \di s  \|_{ C_T \mathcal C^{\alpha+1}}^{(\rho)} \\
&\leq  \| P_{T-t} v_T  \|_{  C_T \mathcal C^{\alpha+1}}  + c\| g \|_{ C_T \mathcal C^{-\beta}}^{(\rho)} \rho^{-\theta} \\
&\leq  \|  v_T  \|_{  \mathcal C^{\alpha+1}} + c\| g \|_{ C_T \mathcal C^{-\beta}}^{(\rho)} \rho^{-\theta}.
\end{align*}
Combining the estimates above we have 
\[
  \| \mathcal Tv  \|_{ C_T \mathcal C^{\alpha+1}}^{(\rho)} \leq c(\lambda+ \| b\|_{ C_T \mathcal C^{-\beta}} ) \rho^{-\theta} \| v  \|_{ C_T \mathcal C^{\alpha+1}}^{(\rho)} + \|  v_T  \|_{  \mathcal C^{\alpha+1}}  + c\| g \|_{ C_T \mathcal C^{-\beta}}^{(\rho)} \rho^{-\theta}.
\]
Choosing   $\rho =  [2c(\lambda+ \| b\|_{ C_T \mathcal C^{-\beta}})]^{1/\theta}$  so that 
$c(\lambda+ \| b\|_{ C_T \mathcal C^{-\beta}} ) \rho^{-\theta} = \frac12$
we get
\[
\| \mathcal Tv  \|_{ C_T \mathcal C^{\alpha+1}}^{(\rho)} \leq \frac12 \| v  \|_{ C_T \mathcal C^{\alpha+1}}^{(\rho)} +\| v_T  \|_{ \mathcal C^{\alpha+1}}  + \frac c {2c(\lambda+ \| b\|_{ C_T \mathcal C^{-\beta}})} \|g\|^{(\rho)}_{ C_T \mathcal C^{-\beta}} .
\]
Since $v$ is a solution then $\mathcal Tv =v $ and  we get 
\begin{align}\label{eq:vTrho} \nonumber
  \| v  \|_{ C_T \mathcal C^{\alpha+1}}^{(\rho)} &\leq 2 \| v_T  \|_{ \mathcal C^{\alpha+1}} + 2\frac 1 {2 (\lambda+ \|b\|_{C_T \mathcal C^{-\beta}})}  \|g\|^{(\rho)}_{C_T \mathcal C^{-\beta}}\\ 
  &\leq 2 \| v_T  \|_{ \mathcal C^{\alpha+1}}   + \frac1\lambda \|g\|^{(\rho)}_{C_T \mathcal C^{-\beta}}.
\end{align}
Using   $\| v \|_{C_T \mathcal C^{\alpha+1}} = \sup_{t\in[0,T]}e^{\rho(T-t)} e^{-\rho(T-t)} \|v(t)\|_{\mathcal C^{\alpha+1}} \leq  e^{\rho T}\| v \|^{(\rho)}_{C_T \mathcal C^{\alpha+1}}$, the bound \eqref{eq:vTrho} and  $ \| g  \|_{ C_T \mathcal C^{\alpha+1}}^{(\rho)} \leq  \|g \|_{C_T \mathcal C^{\alpha+1}}$ 
we get
\begin{align*}
 \| v  \|_{ C_T \mathcal C^{\alpha+1}} & \leq e^{\rho T}  \| v  \|_{ C_T \mathcal C^{\alpha+1}}^{(\rho)}\\
 & \leq   2 e^{\rho T} \| v_T  \|_{ \mathcal C^{\alpha+1}} +e^{\rho T}\frac1\lambda \|g\|_{C_T \mathcal C^{-\beta}}^{(\rho)}  \\
 &\leq  2 e^{\rho T} \max\{1, \frac1\lambda\} ( \| v_T  \|_{ \mathcal C^{\alpha+1}}+ \|g\|_{C_T \mathcal C^{-\beta}}).
\end{align*}
Recall that we chose  $\rho =  [2c(\lambda+ \| b\|_{ C_T \mathcal C^{-\beta}})]^{1/\theta}$  and since $\theta>0 $ the result follows with $R_\lambda(x): =2 \exp\{[2c(\lambda+ x)]^{1/\theta} T\} \max\{1, \frac1\lambda\} $.
\end{proof}

A special case of interest of PDE \eqref{eq:PDE} is the following. 
Let $\text{id}_i(x) = x_i,$ which clearly belongs to  $\mathcal D^0_{\mathcal L}$, see Definition \ref{def:L}.
Thus  $\mathcal L \, \text{id}_i$ is well-defined and gives $\mathcal L \, \text{id}_i = b_i$.  An immediate consequence of  Theorem \ref{thm:PDElambda0} point (i) with $\lambda = 0, v_T =  x_i, g= b_i$ is the following corollary, taking into account that $\text{id}_i \in C_T D \mathcal C^{\beta} $.
\begin{corollary}\label{cor:Lid=b}
The function  $\text{id}_i$ is the  solution of $\mathcal L v = b_i$; $v(T) = \text{id}_i$ (unique in $ C_T D \mathcal C^{\beta} $). 
\end{corollary}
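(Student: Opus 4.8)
The plan is to read the statement as a direct verification combined with the existence and uniqueness already supplied by Theorem \ref{thm:PDElambda0}(i). First I would check that the data $v_T = \text{id}_i$ and $g = b_i$ fall under Assumption \ref{ass:PDEv} with $\lambda = 0$. Since $b$ satisfies Assumption \ref{ass:param-b}, each component $b_i \in C_T \mathcal C^{(-\beta)+}$, so the source term is admissible. For the terminal datum, $\text{id}_i$ is differentiable with $\nabla \text{id}_i = e_i$, the constant $i$-th basis vector, which lies in $\mathcal C^\alpha$ for every $\alpha$; hence $\text{id}_i \in D\mathcal C^{(1-\beta)-}$, and being constant in time it belongs to $C_T D\mathcal C^\beta$. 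Thus Theorem \ref{thm:PDElambda0}(i) applies and yields a unique mild solution in $C_T D\mathcal C^\beta$.

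It then remains to identify this unique solution with $\text{id}_i$ itself. I would do this by the direct computation $\mathcal L\,\text{id}_i = b_i$ recorded in the paragraph preceding the corollary: since $\text{id}_i$ does not depend on time, $\dot{\text{id}}_i = 0$; since $\text{id}_i$ is affine, $\tfrac12\Delta\,\text{id}_i = 0$; and $\nabla\,\text{id}_i \, b = e_i \cdot b = b_i$. The terminal condition $\text{id}_i(T) = \text{id}_i$ holds trivially. Because $\text{id}_i \in \mathcal D^0_{\mathcal L}$, testing the identity $\dot v + \tfrac12\Delta v + \nabla v\,b = b_i$ against any $\varphi \in \mathcal S$ and integrating over $[t,T]$ reproduces the weak formulation \eqref{eq:weak}, the $\dot v$-term telescoping into $\langle v_T,\varphi\rangle - \langle v(t),\varphi\rangle$; by Proposition \ref{pr:mildweak} this weak solution is also the mild one.

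Alternatively, and perhaps more transparently, I would verify \eqref{eq:mild} directly: with $v = \text{id}_i$, $\lambda = 0$, $g = b_i$ one has $\nabla\,\text{id}_i(s)\,b(s) = b_i(s)$, so the two integrals in \eqref{eq:mild} cancel, leaving the single requirement $\text{id}_i = P_{T-t}\,\text{id}_i$. This holds because $(P_s\,\text{id}_i)(x) = \E[(W^x_s)_i] = x_i$, the heat semigroup preserving affine functions with the chosen normalisation. Either route exhibits $\text{id}_i$ as a mild solution, and the uniqueness from Theorem \ref{thm:PDElambda0}(i) closes the argument.

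There is no genuinely hard step here: the only points needing a line of care are confirming that the strong $\mathcal L$-solution notion is consistent with the mild/weak one (handled by Remark \ref{rm:AC} together with Proposition \ref{pr:mildweak}) and recording the elementary fact that $P_s$ fixes $\text{id}_i$. The corollary is therefore immediate once the admissibility of the data and this single semigroup identity are in place.
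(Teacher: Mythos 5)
Your proposal is correct and follows essentially the same route as the paper: observe that $\text{id}_i \in \mathcal D^0_{\mathcal L}$ with $\mathcal L\,\text{id}_i = b_i$ by direct computation (time derivative and Laplacian vanish, $\nabla\,\text{id}_i\,b = b_i$), note that $\text{id}_i \in C_T D\mathcal C^\beta$, and invoke Theorem \ref{thm:PDElambda0}(i) with $\lambda = 0$, $v_T = \text{id}_i$, $g = b_i$ for uniqueness. Your additional direct check of the mild formulation, where the two integrals cancel and $P_{T-t}\,\text{id}_i = \text{id}_i$ since the heat semigroup fixes affine functions, is a sound and slightly more explicit verification of the same argument.
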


\subsection{Properties of the solution: bounds and continuity}

Another particular
case of interest of PDE \eqref{eq:PDE} is given when $g$ is chosen to be  the $i$th component of the drift $b$ 
 and the terminal condition is zero.  We denote  by $u_i$ the solution in this
 case, that is 
\begin{equation}\label{eq:PDEu}
\left\{
\begin{array}{l}
\partial_t u_i + \tfrac12 \Delta  u_i+ \nabla  u_ib = \lambda u_i - b_i\\
u_i(T) = 0.
\end{array}
\right.
\end{equation}

\begin{remark}\label{rm:PDEexistence}
  Since PDE \eqref{eq:PDEu} is a special case of \eqref{eq:PDE} where $g=-b_i$ and $v_T =0$, by Theorem \ref{thm:PDElambda0} the solution $u_i$ exists in $C_T\mathcal C^{(2-\beta)-}$  and is unique in $C_T D \mathcal C^{\beta}$
  (indeed Assumption \ref{ass:PDE} is automatically satisfied for $v_T$ and $g$ if $b$ satisfies Assumption \ref{ass:param-b}). 
\end{remark}

\begin{remark}
Let $b \in  C_T \mathcal C^{0+}$.  Then the unique solution $u$ to \eqref{eq:PDEu} coincides with the classical solution in  $C^{1, 2+\nu}$ (see \cite[Theorem 5.1.9]{lunardi95}, see also \cite[Theorem A.3]{issoglio_russoMPb}).

Indeed, if $b \in  C_T \mathcal C^{0+}$ then $ b \in C^{0,\nu}([0,T] \times \mathbb R^d)$ for some $\nu >0$ by \cite[Remark A.2]{issoglio_russoMPb}, so by \cite[Theorem 5.1.9]{lunardi95}   there exists a (unique) solution $\bar u$ in  $  C^{1, 2+\nu}$ to PDE \eqref{eq:PDEu}. Moreover $b \in C_T \mathcal C^{0+} \subset C_T \mathcal C^{(-\beta)+}$ hence $u$ is the unique solution of \eqref{eq:PDEu} in $C_T \mathcal C^{(1+\beta)+}$. We moreover have the inclusion $  C^{1, 2+\nu} \subset C_T \mathcal C^{(1+\beta)+}$, thus  $\bar u = u \in  C^{1, 2+\nu}$.
\end{remark}

\begin{proposition}\label{pr:PDEproperties}
Let $b$ satisfy Assumption \ref{ass:param-b}, in particular $b\in C_T \mathcal C^{-\beta+\eps}$ for some  $\eps>0$ such that  $\theta:=\frac{1+2\beta-\eps}{2}<1$. Let $u_i$, $i=1, \ldots,d$ be the unique solution of \eqref{eq:PDEu} as given in Remark \ref{rm:PDEexistence}. Then the following holds.
\begin{itemize}
\item[(i)] The solution $u_i$ is bounded in $(t,x)$, that is, there exists a constant $c$ such that
\[
\sup_{(t,x)\in[0,T]\times \mathbb R^d} |u_i(t,x)| \leq c.
\]
\item[(ii)]  There is a constant $C(\beta, \eps) $ such that choosing   $\lambda$ with
\begin{equation}\label{eq:lambda}
{\lambda}^{1-\theta} \geq C(\beta, \eps) \|b\|_{C_T\mathcal C^{-\beta+\eps} },
\end{equation}
then we have 
\[
\sup_{(t,x)\in[0,T]\times \mathbb R^d} |\nabla u_i(t,x)| \leq \frac12.
\]
\end{itemize}
\end{proposition}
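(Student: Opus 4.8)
The plan is to base both statements on a single integral representation of $u_i$ that makes the role of $\lambda$ explicit. Since $u_i$ solves \eqref{eq:PDEu}, it is the mild solution \eqref{eq:mild} with $v_T=0$ and $g=-b_i$; writing $f:=\nabla u_i\, b+b_i\in C_T\mathcal C^{-\beta+\eps}$ (well-defined by the pointwise product \eqref{eq:bonyt}, since $\nabla u_i\in C_T\mathcal C^\beta$ and $b\in C_T\mathcal C^{-\beta+\eps}$), the mild equation reads
\[
u_i(t)=\int_t^T P_{s-t}f(s)\,\di s-\lambda\int_t^T P_{s-t}u_i(s)\,\di s.
\]
First I would replace this by the representation with the $\lambda$-shifted heat semigroup $e^{-\lambda\tau}P_\tau$, namely $u_i(t)=\int_t^T e^{-\lambda(s-t)}P_{s-t}f(s)\,\di s$. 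This is obtained by a Fubini computation: denoting the right-hand side by $\tilde u$, inserting it into $\lambda\int_t^T P_{s-t}(\cdot)\,\di s$, using the semigroup property and $\int_t^r\lambda e^{-\lambda(r-s)}\,\di s=1-e^{-\lambda(r-t)}$, one checks that $\tilde u$ solves the same linear (with $f$ fixed) Volterra equation as $u_i$; uniqueness of its solution, via the contraction argument of Theorem \ref{thm:PDElambda0}, then gives $\tilde u=u_i$.

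Item (i) is then essentially a regularity statement: by Remark \ref{rm:PDEexistence} we have $u_i\in C_T\mathcal C^{(2-\beta)-}$, and for any $\alpha\in(0,2-\beta)$ the embedding $\mathcal C^\alpha\hookrightarrow L^\infty$ gives $\sup_{(t,x)}|u_i(t,x)|\le\sup_t\|u_i(t)\|_\alpha=\|u_i\|_{C_T\mathcal C^\alpha}<\infty$, which is the desired bound. (Alternatively, the shifted representation together with Schauder's estimate \eqref{eq:Pt} and the Gamma-integral bound below yields the quantitative $\sup_{(t,x)}|u_i|\le c\,\lambda^{\theta_0-1}\|f\|_{C_T\mathcal C^{-\beta+\eps}}$ for a suitable $\theta_0\in(0,1)$.)

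For item (ii) I would estimate $\nabla u_i$ directly from the shifted representation. Commuting $\nabla$ with $P_\tau$ (Remark \ref{rm:nablaPt}) and applying the gradient estimate of Lemma \ref{lm:nablaP} with output index $\beta$ — which forces exactly the exponent $\theta=\frac{1+2\beta-\eps}{2}\in(0,1)$ of the statement — gives
\[
\|\nabla u_i(t)\|_\beta\le c\int_t^T e^{-\lambda(s-t)}(s-t)^{-\theta}\|f(s)\|_{-\beta+\eps}\,\di s\le c\,\Gamma(1-\theta)\,\lambda^{\theta-1}\,\|f\|_{C_T\mathcal C^{-\beta+\eps}},
\]
where the time integral is bounded as in Lemma \ref{lm:Ptnablawh} by extending it to $[0,\infty)$ and substituting $x=\lambda(s-t)$. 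Bounding $\|f\|_{C_T\mathcal C^{-\beta+\eps}}\le c\|b\|_{C_T\mathcal C^{-\beta+\eps}}\big(\|\nabla u_i\|_{C_T\mathcal C^\beta}+1\big)$ via \eqref{eq:bonyt} and taking the supremum over $t$, I set $M:=\|\nabla u_i\|_{C_T\mathcal C^\beta}$ (finite because $\nabla u_i\in C_T\mathcal C^{(1-\beta)-}\subset C_T\mathcal C^\beta$) and obtain the self-referential inequality $M\le c_1\|b\|_{C_T\mathcal C^{-\beta+\eps}}\lambda^{\theta-1}M+c_2\|b\|_{C_T\mathcal C^{-\beta+\eps}}\lambda^{\theta-1}$. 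Once $\lambda^{1-\theta}\ge 2c_1\|b\|_{C_T\mathcal C^{-\beta+\eps}}$ the first term is absorbed, yielding $M\le 2c_2\|b\|_{C_T\mathcal C^{-\beta+\eps}}\lambda^{\theta-1}$; since $\sup_{(t,x)}|\nabla u_i(t,x)|=\sup_t\|\nabla u_i(t)\|_\infty\le M$, imposing in addition $\lambda^{1-\theta}\ge 4c_2\|b\|_{C_T\mathcal C^{-\beta+\eps}}$ forces $\sup_{(t,x)}|\nabla u_i(t,x)|\le\tfrac12$. Taking $C(\beta,\eps):=\max\{2c_1,4c_2\}$ produces exactly condition \eqref{eq:lambda}.

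The main obstacle is the circularity in this last estimate: the source $f$ contains $\nabla u_i$ itself, so the bound on $\|\nabla u_i\|_{C_T\mathcal C^\beta}$ reappears on the right-hand side. This is precisely what the decaying factor $\lambda^{\theta-1}$ is for — choosing $\lambda$ large makes the self-interaction contractive and lets the term be absorbed — and it is the reason the threshold \eqref{eq:lambda} is a lower bound on $\lambda$. A secondary, more routine point is the rigorous justification of the shifted representation, i.e.\ the Fubini interchange and the appeal to uniqueness for the linearised equation.
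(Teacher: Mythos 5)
Your proposal is correct and follows essentially the same route as the paper: item (i) via the embedding of $C_T\mathcal C^{(2-\beta)-}$ into bounded functions, and item (ii) via the $\lambda$-damped representation $u_i(t)=\int_t^T e^{-\lambda(s-t)}P_{s-t}f(s)\,\di s$, the gradient Schauder estimate with the exponent $\theta=\frac{1+2\beta-\eps}{2}$, the Gamma-integral bound, and absorption of the self-referential term once $\lambda$ satisfies \eqref{eq:lambda}. The only (harmless) deviation is how the damped representation is justified: you verify it through a Fubini/Volterra-uniqueness argument at the level of the mild equation, whereas the paper conjugates by $e^{-\lambda t}$ and identifies $v(t)=e^{-\lambda t}u(t)$ as a weak, hence by Proposition \ref{pr:mildweak} mild, solution of the transformed PDE \eqref{eq:vlambda} --- both routes are sound.
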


\begin{proof}
 For simplicity of notation we drop the subscript $i$ in the rest of the proof.
 We know that $u \in C_T \mathcal C^{1+\beta}$ by Remark \ref{rm:PDEexistence}.
 
 \emph{Item (i)} 
 By \eqref{eq:gamma12} we have
\[
\sup_{(t,x)\in [0,T]\times \mathbb R^d} |u(t,x)| \leq  \sup_{t\in [0,T]} \|u(t)\|_{1+\beta} 
=  \|u\| _{ C_T\mathcal C^{1+\beta}  } <\infty.
\]

\emph{Item (ii)}
By  \eqref{eq:gamma01} we have 
\[
\sup_{(t,x)\in [0,T]\times \mathbb R^d} |\nabla u(t,x)| \leq  \sup_{t\in [0,T]} \|\nabla u(t)\|_{\beta} .
\]  
Assume now (we will show it below) that the unique solution $u$ of \eqref{eq:PDEu} is also a solution of the integral equation
\begin{equation}\label{eq:ulambda}
u(t) = \int_t^T e^{-\lambda(s-t)} P_{s-t} \left(\nabla u(s) b(s) \right)  \di s  - \int_t^T e^{-\lambda(s-t)} P_{s-t} b(s)    \di s.
\end{equation}
From \eqref{eq:ulambda} we take the gradient on both sides and calculate its norm in $\mathcal C^\beta$. We use Schauder's estimates \eqref{eq:Pt},  Bernstein's inequality \eqref{eq:nabla}, and the fact that $\nabla u(s) b(s),  b(s)  \in \mathcal C^{-\beta+\eps} $ by pointwise product \eqref{eq:bony}
to get
\begin{align*}
\|\nabla u(t)\|_\beta \leq & \int_t^T\| \nabla( e^{-\lambda(s-t)} P_{s-t} \left(\nabla u(s) b(s) \right)) \|_{\beta}  \di s\\
 &+ \int_t^T \| \nabla (e^{-\lambda(s-t)} P_{s-t} b(s) )\|_{\beta}   \di s\\
\leq & c \int_t^T ( \|   e^{-\lambda(s-t)} P_{s-t} \left(\nabla u(s) b(s) \right)\|_{\beta+1}  + \| e^{-\lambda(s-t)} P_{s-t} b(s) \|_{\beta+1} )  \di s\\
\leq & c \int_t^T   e^{-\lambda(s-t)} ({s-t})^{-\frac{1+2\beta-\eps}{2}} \left(\| \nabla u(s) \|_{\beta} +1 \right) \|b(s)\|_{-\beta+\eps}  \di s\\
\leq & c \int_t^T   e^{-\lambda(s-t)} ({s-t})^{-\frac{1+2\beta-\eps}{2}} \di s  (1+ \sup_{s\in[0,T]}\| \nabla u(s) \|_{\beta}) \|b\|_{C_T\mathcal C^{-\beta+\eps}} ,
\end{align*}
where $c$ varies from line to line but it depends only on $\beta$ and $\eps$.
Since $\theta:= \frac{1+2\beta-\eps}{2}<1$ by assumption, the integral is bounded from above by $\Gamma(1-\theta) \lambda^{\theta-1}$ by a change of variable $\tilde s= \lambda (s-t)$ and using the definition of the Gamma function \eqref{eq:defGamma}. We get
\[
\sup_{t\in[0,T]}\|\nabla u(t)\|_\beta \leq  c\Gamma(1-\theta) \lambda^{\theta-1} (1+ \sup_{t\in[0,T]}\|\nabla u(t)\|_\beta ) \|b\|_{C_T\mathcal C^{-\beta+\eps}},
\]
that is
\[
\sup_{t\in[0,T]}\|\nabla u(t)\|_\beta (1-c\lambda^{\theta-1}\Gamma(1-\theta)\|b\|_{C_T\mathcal C^{-\beta+\eps}} ) \leq c \Gamma(1-\theta) \lambda^{\theta-1} \|b\|_{C_T\mathcal C^{-\beta+\eps}} 
\]
and choosing $\lambda$ according to \eqref{eq:lambda} with $C(\beta, \eps) = 3c\Gamma(1-\theta)$  we have   
\[
\sup_{t\in[0,T]}\|\nabla u(t)\|_\beta \leq \frac{ c\Gamma(1-\theta) \lambda^{\theta-1}\|b\|_{C_T \mathcal C^{-\beta+\eps}}} {1- c \Gamma(1-\theta) \lambda^{\theta-1}\|b\|_{C_T\mathcal C^{-\beta+\eps} }}\leq  \frac12, 
\]
as wanted.

It is left to prove that \eqref{eq:ulambda} holds. We can multiply both sides of  \eqref{eq:ulambda} by $e^{-\lambda t }$ to obtain
\[
e^{-\lambda t } u(t) = \int_t^T e^{-\lambda s } P_{s-t} \left(\nabla u(s) b(s) \right)  \di s  - \int_t^T e^{-\lambda s} P_{s-t} b(s)    \di s.
\]
Setting $\hat b(s):= e^{-\lambda s}b(s)$  we observe that the equation above writes
\[
e^{-\lambda t}  u(t) = \int_t^T  P_{s-t} \left(\nabla e^{-\lambda s} u(s) b(s) \right)  \di s  - \int_t^T P_{s-t} \hat  b(s)    \di s,
\] 
which is the mild form of the PDE (recall that mild and weak solutions are equivalent in $C_T D \mathcal C^{\beta}$ by Proposition \ref{pr:mildweak})
\begin{equation}\label{eq:vlambda}
\left\{
\begin{array}{l}
\partial_t v + \frac12\Delta v + \nabla v \, b = -\hat b\\
v(T) =0,
\end{array}
\right.
\end{equation}
where  $v(t):=e^{-\lambda t } u(t) $. 
Therefore to show that \eqref{eq:ulambda} holds it is enough to show that if $u$ is a weak solution of \eqref{eq:PDEu}, 
then $v(t)=e^{-\lambda t } u(t) $ is a weak solution of \eqref{eq:vlambda}. For $u$ weak solution of \eqref{eq:PDEu} then $u \in C^1([0,T]; \mathcal S')$ and \eqref{eq:vlambda} readily holds by time-differentiation. Moreover $v\in C_T D \mathcal C^{\beta}$ since $u\in C_T D \mathcal C^{\beta}$.
\end{proof}

Next we consider another special case of PDE \eqref{eq:PDE}.
Let us define the  vector-valued function $\phi:\R^d \to \R^d$  as  
\begin{equation}\label{eq:phi}
  \phi(t,x): =  u(t,x) + x,
\end{equation} 
where $u = (u_1,  \ldots, u_d)^\top$ and  $u_i$ is the  solution of \eqref{eq:PDEu}, unique in the sense of Remark \ref{rm:PDEexistence}, for $i=1, \ldots, d$.
We define $\phi$ as a column vector. 

\begin{theorem}\label{thm:PDEphi}
Each component $\phi_i$, for $i=1, \ldots, d$, of
the function  $\phi$ defined in \eqref{eq:phi} is the unique solution of 
\begin{equation}\label{eq:PDEphi}
\left\{
\begin{array}{l}
\mathcal L \phi_i = \lambda (\phi_i - \text{id}_i)\\
\phi_i(T) = \text{id}_i
\end{array}
\right.
\end{equation} 
in $C_T D\mathcal C^{\beta}$.
\end{theorem}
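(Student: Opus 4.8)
The plan is to avoid any new fixed-point argument and instead reduce \eqref{eq:PDEphi} to the already-solved problem \eqref{eq:PDEu} by exploiting the linearity of $\mathcal L$ together with Corollary \ref{cor:Lid=b}. The key point to flag is that \eqref{eq:PDEphi} does \emph{not} fall directly under the class \eqref{eq:PDE}: writing its right-hand side as $\lambda\phi_i + g$ would force $g=-\lambda\,\text{id}_i$, which is unbounded and hence not in $C_T\mathcal C^{(-\beta)+}$. The device that resolves this is the substitution $w:=\phi_i-\text{id}_i$, which absorbs the offending identity term and turns the equation into \eqref{eq:PDEu}.

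For existence I would take $\phi_i:=u_i+\text{id}_i$ and first verify membership in the solution space: $u_i\in C_T\mathcal C^{1+\beta}\subset C_T D\mathcal C^\beta$ by Remark \ref{rm:PDEexistence} and the inclusion $\mathcal C^{1+\beta}\subset D\mathcal C^\beta$, while $\text{id}_i\in C_T D\mathcal C^\beta$ since $\nabla\,\text{id}_i=e_i$ is constant, so $\phi_i\in C_T D\mathcal C^\beta$. Both summands lie in $\mathcal D_{\mathcal L}^0$ (the identity by the remark preceding Corollary \ref{cor:Lid=b}, and $u_i$ by Remark \ref{rm:AC}), so $\mathcal L\phi_i$ is well-defined and linearity gives
$$\mathcal L\phi_i=\mathcal L u_i+\mathcal L\,\text{id}_i=(\lambda u_i-b_i)+b_i=\lambda u_i=\lambda(\phi_i-\text{id}_i),$$
where the first bracket is the defining equation \eqref{eq:PDEu} and the second uses Corollary \ref{cor:Lid=b}. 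The terminal condition is immediate, $\phi_i(T)=u_i(T)+\text{id}_i=\text{id}_i$.

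For uniqueness I would let $\psi_i\in C_T D\mathcal C^\beta$ be any solution of \eqref{eq:PDEphi}; by Remark \ref{rm:AC} it lies in $\mathcal D_{\mathcal L}^0$, so $w:=\psi_i-\text{id}_i\in C_T D\mathcal C^\beta$ is again in the domain of $\mathcal L$. Subtracting $\mathcal L\,\text{id}_i=b_i$ from the equation for $\psi_i$ and using linearity yields $\mathcal L w=\lambda(\psi_i-\text{id}_i)-b_i=\lambda w-b_i$ with $w(T)=0$, which is exactly \eqref{eq:PDEu}. Since \eqref{eq:PDEu} has a unique solution in $C_T D\mathcal C^\beta$ by Remark \ref{rm:PDEexistence}, we conclude $w=u_i$, hence $\psi_i=u_i+\text{id}_i=\phi_i$.

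The only genuine obstacle is conceptual rather than technical: recognising that the unbounded forcing $\lambda\,\text{id}_i$ places \eqref{eq:PDEphi} just outside the framework of \eqref{eq:PDE}, so that one must shift by $\text{id}_i$ before invoking Theorem \ref{thm:PDElambda0}. Once this substitution is in place, everything reduces to linearity of $\mathcal L$ and the verifications that the functions involved belong to $\mathcal D_{\mathcal L}^0$, both of which are routine.
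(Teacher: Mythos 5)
Your proof is correct, and your existence step is exactly the paper's: both verify $\mathcal L\phi_i=\mathcal L u_i+\mathcal L\,\mathrm{id}_i=(\lambda u_i-b_i)+b_i=\lambda(\phi_i-\mathrm{id}_i)$ using linearity, Corollary \ref{cor:Lid=b} and Remark \ref{rm:PDEexistence}. Where you diverge is uniqueness. The paper simply cites Theorem \ref{thm:PDElambda0} item (i), which is formally loose for exactly the reason you flag: cast as an instance of \eqref{eq:PDE}, equation \eqref{eq:PDEphi} has $g=-\lambda\,\mathrm{id}_i$, which is unbounded and lies outside $C_T\mathcal C^{(-\beta)+}$, so Assumption \ref{ass:PDEv} fails and the theorem does not literally apply. (The citation is salvageable because the contraction estimate \eqref{eq:stimarho1} only involves the difference $v_1-v_2$, so $g$ and $v_T$ never enter the uniqueness argument; presumably this is what the authors intend, but it is left implicit.) Your alternative — substituting $w:=\psi_i-\mathrm{id}_i$ for an arbitrary solution $\psi_i\in C_T D\mathcal C^{\beta}$, subtracting $\mathcal L\,\mathrm{id}_i=b_i$ to get $\mathcal L w=\lambda w-b_i$, $w(T)=0$, and invoking the uniqueness for \eqref{eq:PDEu} from Remark \ref{rm:PDEexistence} — stays entirely within the hypotheses of results as stated (the shifted forcing $-b_i$ does satisfy Assumption \ref{ass:PDE}), at the small cost of checking that $w\in C_T D\mathcal C^{\beta}\cap\mathcal D_{\mathcal L}^0$, which you do. So your route buys rigour where the paper's one-line citation papers over a hypothesis mismatch, while ultimately resting on the same fixed-point uniqueness.
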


\begin{proof}
Using the linearity of the PDEs for $u_i$ and id$_i$ (see Corollary \ref{cor:Lid=b} and Remark \ref{rm:PDEexistence}) it is easy to check that each component  $\phi_i$, for $i=1, \ldots d$ solves \eqref{eq:PDEphi}.
 By Theorem \ref{thm:PDElambda0} item (i) we also have that $\phi_i$ is the unique solution of \eqref{eq:PDEphi}.
\end{proof}

\begin{proposition}\label{pr:phiunique}
Let $\phi$ be given by \eqref{eq:phi}. Then $\phi\in \mathcal D_{\mathcal L}^0$ and the time-derivative $\dot \phi_i$ is  in $C_T\mathcal C^{(-\beta)-}$ for all $i=1, \ldots d$.
\end{proposition}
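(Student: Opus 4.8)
The plan is to read everything off the decomposition $\phi_i = u_i + \text{id}_i$ coming from \eqref{eq:phi} and the PDE \eqref{eq:PDEphi} that $\phi_i$ satisfies by Theorem \ref{thm:PDEphi}. First I would check membership in $\mathcal D_{\mathcal L}^0 = C_T D\mathcal C^{\beta}\cap C^1([0,T];\mathcal S')$. For the first factor, Remark \ref{rm:PDEexistence} gives $u_i\in C_T\mathcal C^{(2-\beta)-}\subset C_T\mathcal C^{1+\beta}\subset C_T D\mathcal C^{\beta}$ (the inclusion $1+\beta<2-\beta$ using $\beta<\tfrac12$), while $\text{id}_i\in C_T D\mathcal C^{\beta}$, so $\phi_i=u_i+\text{id}_i\in C_T D\mathcal C^{\beta}$. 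Since by Theorem \ref{thm:PDEphi} the function $\phi_i$ is a weak solution of \eqref{eq:PDEphi} in $C_T D\mathcal C^{\beta}$, Remark \ref{rm:AC} immediately yields $\phi_i\in\mathcal D_{\mathcal L}^0$, and in particular $\phi_i\in C^1([0,T];\mathcal S')$.

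Next I would identify the time-derivative directly from the equation. Applying the operator $\mathcal L$ of Definition \ref{def:L} to $\phi_i$ and using \eqref{eq:PDEphi}, one gets
\[
\dot\phi_i = \lambda(\phi_i-\text{id}_i) - \tfrac12\Delta\phi_i - \nabla\phi_i\,b = \lambda u_i - \tfrac12\Delta u_i - \nabla\phi_i\,b,
\]
where I used $\phi_i-\text{id}_i=u_i$ together with $\Delta\,\text{id}_i=0$, so that $\Delta\phi_i=\Delta u_i$. It then remains to bound each of the three summands on the right-hand side in $C_T\mathcal C^{(-\beta)-}$.

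For the regularity bookkeeping: the term $\lambda u_i\in C_T\mathcal C^{(2-\beta)-}\subset C_T\mathcal C^{(-\beta)-}$ trivially; iterating Bernstein's inequality \eqref{eq:nabla} twice shows that $\Delta$ maps $\mathcal C^{\gamma}$ boundedly into $\mathcal C^{\gamma-2}$, whence $\Delta u_i\in C_T\mathcal C^{(-\beta)-}$; and since $\nabla\phi_i=\nabla u_i+\nabla\,\text{id}_i\in C_T\mathcal C^{(1-\beta)-}$ (the gradient of $\text{id}_i$ being a constant vector) with $1-\beta>\beta$, again by $\beta<\tfrac12$, the pointwise product against $b\in C_T\mathcal C^{(-\beta)+}$ is admissible and Remark \ref{rm:bonyt} gives $\nabla\phi_i\,b\in C_T\mathcal C^{(-\beta)+}\subset C_T\mathcal C^{(-\beta)-}$. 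Summing the three contributions, each of which is continuous in time in the respective space, yields $\dot\phi_i\in C_T\mathcal C^{(-\beta)-}$. I do not expect a genuine obstacle here; the only mildly delicate point is keeping track of the $+/-$ Besov indices and invoking the two elementary inclusions $\mathcal C^{(2-\beta)-}\subset\mathcal C^{(-\beta)-}$ and $\mathcal C^{(-\beta)+}\subset\mathcal C^{(-\beta)-}$, after which everything follows from exploiting the splitting $\phi_i=u_i+\text{id}_i$.
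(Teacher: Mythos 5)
Your proposal is correct and follows essentially the same route as the paper: membership in $\mathcal D_{\mathcal L}^0$ via Theorem \ref{thm:PDEphi} combined with Remark \ref{rm:AC}, then solving the PDE \eqref{eq:PDEphi} for $\dot\phi_i$ and performing the Besov-index bookkeeping on $\lambda u_i$, $\Delta\phi_i$ and $\nabla\phi_i\,b$ exactly as in the paper's proof. If anything, your splitting $\phi_i=u_i+\mathrm{id}_i$ with $\Delta\,\mathrm{id}_i=0$ is slightly more careful than the paper's shorthand ``$\phi\in C_T\mathcal C^{(2-\beta)-}$'' (which, taken literally, fails since $\phi$ is unbounded, though the conclusions $\Delta\phi=\Delta u\in C_T\mathcal C^{(-\beta)-}$ and $\nabla\phi\in C_T\mathcal C^{(1-\beta)-}$ are unaffected).
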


\begin{proof}
In this proof we drop the subscript $i$ for ease of writing. 

By Theorem \ref{thm:PDEphi} and Remark \ref{rm:AC} we have  $\phi \in {\mathcal D}_{\mathcal L}^0$. 
Using  \eqref{eq:PDEphi} we get $\mathcal L \phi = \lambda (\phi-\text{id}) $ with $\phi(T) = \text{id}$, therefore concerning
the time-derivative $\dot \phi$ we have
\[
\int_0^t \dot \phi(s, \cdot) \di s = - \int_0^t \frac12 \Delta \phi(s, \cdot) \di s - \int_0^t \nabla \phi(s, \cdot) \, b (s, \cdot) \di s  +  \int_0^t \lambda u(s, \cdot) \di s.
\]
Since $\phi \in C_T \mathcal C^{(2-\beta)-}$ by Remark \ref{rm:PDElambda0}, we 
have $ \Delta \phi \in C_T \mathcal C^{(-\beta)-}$ and   $\nabla \phi \in C_T \mathcal C^{(1-\beta)-}$. Moreover 
$b\in C_T \mathcal C^{(-\beta)+}$, so $\nabla \phi \, b \in C_T \mathcal C^{(-\beta)+}$  by \eqref{eq:bony}, and $u \in C_T\mathcal C^{(2-\beta)-} $.
Thus    $\dot \phi \in C_T \mathcal C^{(-\beta)-}$.
\end{proof}

In the following proposition  we show that $\phi$ enjoys other  useful properties when $\lambda$ is large enough.
\begin{proposition}\label{pr:PDEphi}
 Let $\phi$ be given by \eqref{eq:phi}. 
\begin{itemize}
 \item[(i)] We have $\phi\in C^{0,1}$ and  $\nabla \phi\in C_T \mathcal C^{(1-\beta)-}$. In particular $\nabla \phi$ is uniformly bounded.
 \item[(ii)] For $\lambda$ as in Proposition \ref{pr:PDEproperties} we have that $\phi(t, \cdot)$ is invertible for all $t\in[0,T]$, with the (space-)inverse denoted by 
 \begin{equation}\label{eq:psi}
 \psi:= \phi^{-1}(t, \cdot).
 \end{equation} 
 Moreover $\psi\in C^{0,1}$,  $\nabla \psi$ is uniformly bounded  and  $\nabla \psi (t, \cdot) \in \mathcal C^{(1-\beta)-}$ for all $t \in  [0,T]$ and $\sup_{t\in[0,T]} \| \nabla \psi (t, \cdot)\|_{1-\alpha}<\infty$ for all $\alpha<\beta$.
 \end{itemize}
\end{proposition}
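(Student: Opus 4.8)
The plan is to treat $\phi = u + \mathrm{id}$ as a Lipschitz-small perturbation of the identity and to read off every claimed property from the regularity of $u$ established earlier, inverting $\phi$ by a contraction argument and then propagating regularity to the inverse through the formula $\nabla\psi = (\nabla\phi)^{-1}\circ\psi$.

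For item (i), I would first record that each $u_i\in C_T\mathcal C^{(2-\beta)-}$ by Remark \ref{rm:PDEexistence} (together with Remark \ref{rm:PDElambda0}), so Bernstein's inequality \eqref{eq:nabla} gives $\nabla u\in C_T\mathcal C^{(1-\beta)-}$. Since the identity map has constant (hence smooth) Jacobian $I$, we obtain $\nabla\phi = \nabla u + I\in C_T\mathcal C^{(1-\beta)-}$, which embeds into $C_T\mathcal C^{0+}\hookrightarrow L^\infty$ because $1-\beta>\tfrac12$; this yields the uniform bound on $\nabla\phi$. As $\phi$ is continuous in $(t,x)$ and $\nabla\phi\in C_T\mathcal C^{0+}$, the remark recalled in Section \ref{sc:prelim} (continuity plus $\nabla f\in C_T\mathcal C^{0+}$ implies $f\in C^{0,1}$) gives $\phi\in C^{0,1}$.

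For the invertibility in item (ii), I would use the gradient bound from Proposition \ref{pr:PDEproperties}. Since $\|\nabla u\|_{\mathrm{op}}\le\big(\sum_i|\nabla u_i|^2\big)^{1/2}$, by possibly enlarging $\lambda$ (the same proof allows $\tfrac12$ to be replaced by any small constant) one may assume $\sup_{t,x}\|\nabla u(t,x)\|_{\mathrm{op}}\le\kappa$ with $\kappa<1$. Then at fixed $t$ the estimate
\[
|\phi(t,x)-\phi(t,y)|\ge|x-y|-|u(t,x)-u(t,y)|\ge(1-\kappa)|x-y|
\]
gives injectivity, while surjectivity follows because $x\mapsto y-u(t,x)$ is a $\kappa$-contraction on $\R^d$ for each $y$; I define $\psi(t,y)$ to be its unique fixed point, so that $\phi(t,\psi(t,y))=y$. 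Reading the displayed lower bound with $x=\psi(t,a),\,y=\psi(t,b)$ shows $\psi(t,\cdot)$ is globally Lipschitz with constant $(1-\kappa)^{-1}$, and joint continuity of $\psi$ in $(t,y)$ follows from the uniformity of the contraction and the joint continuity of $u$ from item (i).

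It then remains to regularise $\psi$. Since $\phi(t,\cdot)$ is $C^1$ with invertible Jacobian $I+\nabla u(t,x)$, the inverse function theorem gives $\nabla\psi(t,y)=[\nabla\phi(t,\psi(t,y))]^{-1}$; the Neumann series $(I+\nabla u)^{-1}=\sum_{k\ge0}(-\nabla u)^k$ converges uniformly with $\|(I+\nabla u)^{-1}\|_{\mathrm{op}}\le(1-\kappa)^{-1}$, which bounds $\nabla\psi$ uniformly and, combined with joint continuity of $\nabla u$ and of $\psi$, gives $\psi\in C^{0,1}$. For the Hölder regularity I would argue at fixed $t$: the entries of $(\nabla\phi)^{-1}$ are rational functions of the entries of $\nabla u\in\mathcal C^\gamma$ (any $\gamma<1-\beta$) with denominator $\det(I+\nabla u)$ uniformly bounded away from $0$, and since $\mathcal C^\gamma$ with $\gamma\in(0,1)$ is a Banach algebra stable under division by such functions, $(\nabla\phi)^{-1}(t,\cdot)\in\mathcal C^\gamma$ uniformly in $t$; composing with the Lipschitz map $\psi(t,\cdot)$ preserves $\mathcal C^\gamma$ because $|f(\psi(x))-f(\psi(y))|\le[f]_\gamma\,\mathrm{Lip}(\psi)^\gamma|x-y|^\gamma$, which yields both $\nabla\psi(t,\cdot)\in\mathcal C^{(1-\beta)-}$ and the uniform bound $\sup_t\|\nabla\psi(t,\cdot)\|_{1-\alpha}<\infty$ stated in the proposition. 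The main obstacle is precisely this last transfer of Hölder regularity to $\psi$: it rests on the algebra property of $\mathcal C^\gamma$ and a uniform lower bound on $\det(I+\nabla u)$ for the matrix inversion, together with the composition-with-a-Lipschitz-map estimate, whereas the invertibility itself is routine once $\kappa<1$ is secured, so the real care lies in the operator-norm bookkeeping and the composition step.
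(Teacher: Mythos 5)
Your proposal is correct and follows essentially the same route as the paper: regularity of $\phi$ inherited from $u\in C_T\mathcal C^{(2-\beta)-}$, invertibility of $\phi(t,\cdot)$ from the smallness of $\nabla u$ (where the paper simply cites \cite[Lemma 22]{flandoli_et.al14} for the contraction argument you spell out), uniform boundedness of $\nabla\psi$ via $\nabla\psi=\bigl(\nabla\phi(\psi)\bigr)^{-1}$ with the matrix inverse bounded by the lower bound on $\nabla\phi=\mathrm{id}+\nabla u$, and the H\"older estimate for $\nabla\psi(t,\cdot)$ from the H\"older continuity of $\nabla\phi$ composed with the Lipschitz map $\psi$. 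The only cosmetic differences are that the paper estimates $|\nabla\psi(y)-\nabla\psi(z)|$ directly through the resolvent identity $A^{-1}-B^{-1}=B^{-1}(B-A)A^{-1}$ rather than your Banach-algebra/cofactor argument for $(\nabla\phi)^{-1}$, and your enlargement of $\lambda$ to secure $\sup_{t,x}\|\nabla u(t,x)\|_{\mathrm{op}}\le\kappa<1$ is the same dimension bookkeeping the paper absorbs into the constant $C(\beta,\eps)$ of Proposition \ref{pr:PDEproperties}.
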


\begin{proof} 
{\em Item (i).} The fact that $\phi\in C^{0,1}$ follows from the fact that both id and $u$ are in $C^{0,1}$, since $u\in C_T\mathcal C^{(2-\beta)-}$ by Theorem \ref{thm:PDElambda0}. By the same regularity property of $u$ we also have $\nabla \phi \in C_T \mathcal C^{(1-\beta)-}$.

{\em Item (ii).} 
To show that $\phi(t, \cdot)$ is invertible one can proceed like in the proof of \cite[Lemma 22]{flandoli_et.al14}. This proof uses the fact that $|\nabla u(t,x)|\leq \frac12$ for $\lambda$ satisfying \eqref{eq:lambda} from  Proposition \ref{pr:PDEproperties}.
We can also easily see that $ \psi \in C^{0,1}$. Indeed $\nabla \phi$ is non-degenerate,  $\nabla \psi = \nabla\phi(\psi)^{-1}$ so that $(t,x) \mapsto \nabla\psi(t, \cdot) $ is continuous since $\phi\in C^{0,1}$ and $\psi \in C^{0,1}$. 
Here the superscript $-1$ denotes the matrix inverse.  
Finally we prove that   $\nabla  \psi (t, \cdot)  \in \mathcal C^{(1-\beta)-}$ for all  $t\in[0,T]$. We drop the time variable by ease of notation.  
We notice that $|\nabla \phi|$ is lower bounded  by $\frac12$ because $\nabla \phi = \nabla u  +\text{id}$, hence $|(\nabla \phi)^{-1}|$ is bounded by some constant $C$ independent of time and so  $|\nabla \psi|$ is bounded, where $|\cdot|$ denotes the Frobenious norm. Therefore $\psi$ is  Lipschitz. 
Using the fact that $\nabla \phi \in C_T \mathcal C^{(1-\beta)-}$ ,  $|\nabla \phi^{-1}|$ is bounded and that  $\psi$ is  Lipschitz, we have for $y,z\in \R^d$
\begin{align*}
|\nabla \psi(y) - \nabla\psi(z)| 
&= |\nabla \phi(\psi(y)) ^{-1} - \nabla \phi(\psi(z))^{-1} | \\
&= |\nabla \phi(\psi(z)) )^{-1} \left( \nabla \phi(\psi(z))- \nabla \phi(\psi(y)) \right) \nabla \phi(\psi(y))^{-1} | \\
&\leq  |\nabla \phi(\psi(z)) )^{-1} | \, | \nabla \phi(\psi(z))- \nabla \phi(\psi(y))| \, | \nabla \phi(\psi(y))^{-1} | \\
& \leq C | \nabla \phi(\psi(z))- \nabla \phi(\psi(y))|\\
&\leq C | \psi(z)-\psi(y)|^{1-\beta-\nu}\\
&\leq C | z-y|^{1-\beta-\nu},
\end{align*} 
for any $\nu >0$, where we recall that  $C$ does not depend on time.
\end{proof}

We now state and prove a continuity result for  PDEs with bounded or unbounded solutions. 
\begin{lemma}\label{lm:continuityv}
Let  Assumption \ref{ass:param-b} hold. Let $\lambda >0$ be fixed. 
Let $b^n$ be a sequence converging to $ b$ in $C_T \mathcal C^{-\beta} $,  $g^n \to g$  in $C_T \mathcal C^{-\beta} $.  Then 
\begin{itemize}
\item [(i)] if $v_T^n \to v_T $ in $D\mathcal C^{(1-\beta)-} $ then $v^n\to v$ in $C_T D\mathcal C^{(1-\beta)-}$;
\item[(ii)]  if $v_T^n \to v_T $ in $\mathcal C^{(2-\beta)-} $ then $v^n\to v$ in $C_T \mathcal C^{(2-\beta)-}$,
\end{itemize}
where $v^n$ is the unique solution of \eqref{eq:PDE} with $b$ replaced by $b^n$, $g$ replaced by $g^n$ and $v_T$ replaced by $v_T^n$.\\
In particular $\nabla v^n \to \nabla v$ in $C_T \mathcal C^{(1-\beta)-}$.
\end{lemma}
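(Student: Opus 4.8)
The plan is to compare the two mild formulations through their solution operators and to absorb the difference using the very contraction estimates that gave uniqueness in Theorem \ref{thm:PDElambda0}. Throughout I fix an arbitrary $\alpha\in(\beta,1-\beta)$ and prove $v^n\to v$ in $C_T D\mathcal C^\alpha$ (item (i)), respectively in $C_T\mathcal C^{\alpha+1}$ (item (ii)). Since the target spaces $D\mathcal C^{(1-\beta)-}$ and $\mathcal C^{(2-\beta)-}$ are the intersections over all such $\alpha$, letting $\alpha\uparrow 1-\beta$ yields the stated convergence; the final assertion $\nabla v^n\to\nabla v$ in $C_T\mathcal C^{(1-\beta)-}$ is then immediate, because by the definition of the norm in \eqref{eq:normS}--\eqref{eq:rho01} (resp.\ \eqref{eq:rho12}) the $D\mathcal C^\alpha$-norm (resp.\ $\mathcal C^{\alpha+1}$-norm) of $v^n-v$ dominates $\|\nabla v^n-\nabla v\|_{C_T\mathcal C^\alpha}$.

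Let $\mathcal T$ denote the solution operator \eqref{eq:Tau} associated with the data $(b,g,v_T)$ and $\mathcal T^n$ the one associated with $(b^n,g^n,v_T^n)$, so that $v=\mathcal Tv$ and $v^n=\mathcal T^n v^n$. Writing $w^n:=v^n-v$ and inserting $\mathcal T^n v$, I decompose
\[
w^n=\mathcal T^n v^n-\mathcal T v=\left(\mathcal T^n v^n-\mathcal T^n v\right)+\left(\mathcal T^n v-\mathcal T v\right).
\]
The first summand is handled exactly by the contraction estimate \eqref{eq:stimarho1} (resp.\ \eqref{eq:stimarho2}) applied with $b^n$ in place of $b$, giving in the $\rho$-equivalent norm
\[
\|\mathcal T^n v^n-\mathcal T^n v\|^{(\rho)}\leq c\big(\lambda+\|b^n\|_{C_T\mathcal C^{-\beta}}\big)\rho^{\frac{\alpha+\beta-1}2}\,\|w^n\|^{(\rho)}.
\]
Because $(b^n)$ converges in $C_T\mathcal C^{-\beta}$ it is bounded there, so I may fix one $\rho$, large and independent of $n$, making the prefactor $\leq\tfrac12$.

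It remains to show that the source part vanishes. Using $v=\mathcal Tv$,
\[
\mathcal T^n v-\mathcal T v=P_{T-\cdot}(v_T^n-v_T)+\int_\cdot^T P_{s-\cdot}\big(\nabla v(s)\,(b^n-b)(s)\big)\di s-\int_\cdot^T P_{s-\cdot}\big((g^n-g)(s)\big)\di s.
\]
The first term is bounded by Lemma \ref{lm:PtD} (resp.\ \eqref{eq:Pt} with $\theta=0$) by $c\|v_T^n-v_T\|_{D\mathcal C^\alpha}$ (resp.\ $c\|v_T^n-v_T\|_{\alpha+1}$), which tends to zero by hypothesis. For the remaining two terms I apply Lemma \ref{lm:Ptnablawh} together with the $\rho$-product bound \eqref{eq:bonytrho}; since $\nabla v\in C_T\mathcal C^\alpha$ has a \emph{fixed} finite norm,
\[
\Big\|\int_\cdot^T P_{s-\cdot}\big(\nabla v\,(b^n-b)\big)\di s\Big\|^{(\rho)}\leq c\,\rho^{\frac{\alpha+\beta-1}2}\|\nabla v\|_{C_T\mathcal C^\alpha}\|b^n-b\|_{C_T\mathcal C^{-\beta}}\to0,
\]
and likewise the $g$-term is controlled by $c\,\rho^{\frac{\alpha+\beta-1}2}\|g^n-g\|_{C_T\mathcal C^{-\beta}}\to0$, with $\rho$ now frozen. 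Combining the two parts gives $\|w^n\|^{(\rho)}\leq\tfrac12\|w^n\|^{(\rho)}+o(1)$, hence $\|w^n\|^{(\rho)}\to0$, and by equivalence of norms $w^n\to0$ in $C_T D\mathcal C^\alpha$ (resp.\ $C_T\mathcal C^{\alpha+1}$).

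The only genuinely delicate point is to arrange the algebra so that the summand one absorbs carries $b^n$ (uniformly bounded in $n$, hence compatible with a single admissible $\rho$) while the vanishing summand carries the fixed multiplier $\nabla v$ tested against $b^n-b$. The splitting $w^n=(\mathcal T^n v^n-\mathcal T^n v)+(\mathcal T^n v-\mathcal T v)$ achieves precisely this and, crucially, sidesteps any need for a uniform-in-$n$ a priori bound on $\nabla v^n$. Everything else is routine bookkeeping with the estimates already proved for $\mathcal T$.
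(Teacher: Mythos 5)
Your proof is correct and follows essentially the same route as the paper: expanding your splitting $w^n=(\mathcal T^n v^n-\mathcal T^n v)+(\mathcal T^n v-\mathcal T v)$ reproduces exactly the paper's decomposition $\nabla v^n b^n-\nabla v\,b=(\nabla v^n-\nabla v)b^n+\nabla v\,(b^n-b)$ together with the terminal, $g$- and $\lambda$-terms, and you invoke the same ingredients (Lemma \ref{lm:Ptnablawh}, the product bound \eqref{eq:bonytrho}, Lemma \ref{lm:PtD} resp.\ \eqref{eq:Pt}, and a $\rho$ chosen uniformly in $n$ via $\sup_n\|b^n\|_{C_T\mathcal C^{-\beta}}<\infty$). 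The only difference is cosmetic bookkeeping: you absorb the contraction part by citing \eqref{eq:stimarho1}--\eqref{eq:stimarho2} with $b^n$ rather than re-deriving the estimates term by term, which is a legitimate shortcut.
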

\begin{proof}
We show both items at the same time.

To show that $v^n \to v$ in $ C_T D\mathcal C^{(1-\beta)-}$ (resp.\ in $ C_T \mathcal C^{(2-\beta)-}$) we have to show that for all $\alpha<1-\beta$ such that $v^n \to v$ in $ C_T D\mathcal C^{\alpha}$ (resp.\ in $ C_T \mathcal C^{1+\alpha}$). Since $v_T^n\to v_T $ in $D\mathcal C^{(1-\beta)-}$  (resp.\  in $\mathcal C^{(2-\beta)-} $) for all $\alpha<1-\beta$ such that $v_T^n\to v_T$ in $D\mathcal C^{\alpha}$ (resp.\ in $\mathcal C^{1+\alpha} $ ), we  fix any $\alpha<1-\beta$. 
We show that $\|v^n - v\|_{ C_T D\mathcal C^{\alpha}}^{(\rho)} \to 0 $ (resp. $\|v^n - v\|_{ C_T \mathcal C^{1+\alpha}}^{(\rho)} \to 0 $) as $n\to \infty$, where the superscript $(\rho)$ denotes the $\rho$-equivalent norm introduced in Section \ref{sc:prelim}. Using the definition of mild solution we have
\begin{align*}
 &v^n (t) -v(t) = P_{T-t} ( v^n_T-v_T)\\
& + \int_t^T P_{s-t} \big( \nabla v^n(s)b^n(s) +  \nabla v(s)b^n(s) -  \nabla v(s)b^n(s)   -\nabla v(s)b(s)  \big) \di s\\
& + \int_t^T P_{s-t} ( g^n(s) -g(s)  ) \di s + \lambda\int_t^T P_{s-t} ( v^n(s) -v(s)  ) \di s  . 
\end{align*}
Let us calculate the  $\|\cdot\|_{D\mathcal C^\alpha} $-norm (resp.\  $\|\cdot\|_{\mathcal C^{1+\alpha}} $-norm)  of the quantity above:
\begin{align*}
\|v^n & -v\|_{ C_T D\mathcal C^{\alpha}}^{(\rho)} 
= \sup_{0\leq t\leq T} e^{-\rho(T-t)} \| v^n(t)-v(t) \|_{D\mathcal C^\alpha}\\
\leq&   \sup_{0\leq t\leq T} e^{-\rho(T-t)} \| P_{T-t} ( v^n_T-v_T)\|_{D\mathcal C^\alpha} \\
&+  \|\int_t^T   P_{s-t} \big(  (\nabla v^n(s)-\nabla v(s) ) b^n(s) \big)  \di s\|_{C_T D\mathcal C^\alpha}^{(\rho)}\\
&+   \|\int_t^T  P_{s-t} \big(  \nabla v(s) (b^n(s) -b(s)) \big)  \di s\|_{C_T D\mathcal C^\alpha}^{(\rho)}\\
&+  \|\int_\cdot^T  P_{s-\cdot}   (g^n(s)-g(s) )  \di s\|_{C_T D\mathcal C^\alpha}^{(\rho)}\\
&+ \lambda \sup_{0\leq t\leq T} e^{-\rho(T-t)} \|\int_t^T  P_{s-t}   (v^n(s)-v(s) )  \di s\|_{D\mathcal C^\alpha}\\
&=: B_1 + B_2 + B_3+ B_4+ B_5,
\end{align*}
(respectively $ \|v^n -v\|_{ C_T \mathcal C^{1+\alpha}}^{(\rho)} =: B_1 + B_2 + B_3+ B_4+ B_5$, where the norm in $D\mathcal C^{\alpha}$ is substituted by the one in $ \mathcal C^{1+\alpha}$).\\
The terms $B_1$ and $B_5$ are bounded  using Lemma \ref{lm:PtD} (resp.\ \eqref{eq:Pt} with $\theta=0$) to get
\begin{align*}
&B_1  \leq  \sup_{0\leq t\leq T} \| P_t(  v^n_T-v_T)\|_{D\mathcal C^\alpha}  \leq  c \|   v^n_T-v_T\|_{D\mathcal C^{\alpha}},\\
 &B_5 \leq \lambda  \int_t^T e^{-\rho(s-t)} e^{-\rho(T-s)}\|v^n(s)-v(s)\|_{ D\mathcal C^\alpha} \di s  \leq c\rho^{-1}   \|v^n-v\|^{(\rho)}_{C_T D\mathcal C^\alpha},
 \end{align*}
(respectively similar estimates where the norm in $D\mathcal C^{\alpha}$ is substituted by the one in $ \mathcal C^{1+\alpha}$).\\
For $B_2$ and $B_3$ we apply  Lemma \ref{lm:Ptnablawh} and \eqref{eq:bonytrho} twice and for the term $B_4$ we only apply  Lemma \ref{lm:Ptnablawh}  to get
\begin{align*}
&B_2  \leq c  \|b^n\|_{C_T \mathcal C^{-\beta} } \|  \nabla ( v^n-v)\|^{(\rho)}_{C_T \mathcal C^{\alpha}} \rho^{\frac{\alpha+\beta-1}2}  \leq c  \|b^n\|_{C_T \mathcal C^{-\beta} } \|   v^n-v\|^{(\rho)}_{C_T D \mathcal C^{\alpha}} \rho^{\frac{\alpha+\beta-1}2},\\
&B_3 \leq c \|b^n-b\|_{C_T \mathcal C^{-\beta} } \| \nabla  v\|^{(\rho)}_{C_T \mathcal C^{\alpha}} \rho^{\frac{\alpha+\beta-1}2} \leq c \|b^n-b\|_{C_T \mathcal C^{-\beta} } \|  v\|^{(\rho)}_{C_T D\mathcal C^{\alpha}} \rho^{\frac{\alpha+\beta-1}2},\\
&B_4 \leq c \|g^n-g\|^{(\rho)}_{C_T \mathcal C^{-\beta}} \rho^{\frac{\alpha+\beta-1}2},
 \end{align*}
 (respectively similar estimates where the norm in $D\mathcal C^{\alpha}$ is substituted by the one in $ \mathcal C^{1+\alpha}$). Thus we have
\begin{align*}
\|v^n  -v\|_{ C_T D\mathcal C^{\alpha}}^{(\rho)} &\leq   c \|   v^n_T-v_T\|_{D\mathcal C^{\alpha}} \\
&+  c  \|b^n\|_{C_T \mathcal C^{-\beta} } \|   v^n-v\|^{(\rho)}_{C_T D \mathcal C^{\alpha}} \rho^{\frac{\alpha+\beta-1}2} \\
&+ 
 c \|b^n-b\|_{C_T \mathcal C^{-\beta} } \|  v\|^{(\rho)}_{C_T D\mathcal C^{\alpha}} \rho^{\frac{\alpha+\beta-1}2}\\
&+   c \|g^n-g\|^{(\rho)}_{C_T \mathcal C^{-\beta}} \rho^{\frac{\alpha+\beta-1}2} + 
 c\rho^{-1}   \|v^n-v\|^{(\rho)}_{C_T D\mathcal C^\alpha},
 \end{align*}
  (respectively similar estimates where the norm in $D\mathcal C^{\alpha}$ is substituted by the one in $ \mathcal C^{1+\alpha}$).
  
Similarly to \eqref{eq:rhob} but replacing $\|b\|_{C_T \mathcal C^{-\beta}}$ with $\sup_n \|b^n\|_{ C_T \mathcal C^{-\beta}}$, we choose $\rho\geq 1$ such that 
\[
c(1 + \sup_n\|b^n\|_{C_T \mathcal C^{-\beta} }) \rho^{\frac{\alpha+\beta-1}2} \leq \frac12,
\]
 so that combining the estimates above and  moving to the left-hand side the terms involving $v^n-v$ we get
 \begin{align*}
\frac12  \|v^n  -v\|_{ C_T D\mathcal C^{\alpha}}^{(\rho)} \leq  &c \|   v^n_T-v_T\|_{D\mathcal C^{\alpha}} + c\|g^n-g\|_{C_T \mathcal C^{-\beta}}^{(\rho)}\rho^{\frac{\alpha+\beta-1}2}\\
& + c\|b^n-b\|_{C_T \mathcal C^{-\beta} } \|  v\|^{(\rho)}_{C_T D\mathcal C^{\alpha}} \rho^{\frac{\alpha+\beta-1}2},
 \end{align*}
 (respectively similar estimates where the norm in $D\mathcal C^{\alpha}$ is substituted by the one in $ \mathcal C^{1+\alpha}$).
 The proof is concluded. 
\end{proof}

\begin{remark}\label{rm:continuityv}
Following the proof of Lemma  \ref{lm:continuityv}, it is easy to see that a slightly weaker convergence   remains valid under slightly weaker assumptions, namely
\begin{itemize}
\item [(i)] if $v_T^n \to v_T $ in $D\mathcal C^{\beta+} $ then $v^n\to v$ in $C_T D\mathcal C^{\beta+}$;
\item[(ii)]  if $v_T^n \to v_T $ in $\mathcal C^{(1+\beta)+} $ then $v^n\to v$ in $C_T \mathcal C^{(1+\beta)+}$.
\end{itemize}
In particular $\nabla v^n \to \nabla v$ in $C_T \mathcal C^{\beta+}$.
\end{remark}

\begin{lemma}\label{lm:continuityphi}
 Let $b^n \to b$  in $C_T \mathcal C^{-\beta}$. Let $\lambda$ be such that 
\begin{equation}\label{eq:lambda2}
\lambda^{1-\theta} = C(\beta,\eps) \max\{ \sup_n \|b^n\|_{C_T \mathcal C^{-\beta+\eps}}, \|b\|_{C_T \mathcal C^{-\beta+\eps}} \}
\end{equation}
with $\theta:= \tfrac{1+2\beta-\eps}2<1$ and $C(\beta,\eps)$ chosen according to Proposition \ref{pr:PDEproperties} item (ii). 
Let $\phi^n$  be defined as in \eqref{eq:phi} but with $b$ replaced by $b^n$ and let  $\psi^n$ be the (space-)inverse of $\phi^n$ as in \eqref{eq:psi}. Then we have  
\begin{itemize}
\item[(i)] $u^n \to u, \nabla u^n \to \nabla u, \phi^n \to \phi$ and $\psi^n \to \psi$ uniformly on $[0,T]\times \R^d$;
\item[(ii)]  $\|\nabla \phi^n\|_\infty$ and $|\phi^n(0,0)|$ are uniformly bounded in $n$.
\end{itemize}
\end{lemma}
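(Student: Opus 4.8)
The plan is to establish item~(i) by first invoking the continuity of the PDE solution map from Lemma~\ref{lm:continuityv} to obtain the convergence of $u^n$ and of $\phi^n$, and then upgrading this to the convergence of the inverses $\psi^n$ through a uniform bi-Lipschitz estimate. Item~(ii) will then follow as a by-product of the bounds collected along the way.

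To begin, I would note that \eqref{eq:PDEu} for $u^n_i$ is a particular instance of \eqref{eq:PDE} with $b$ replaced by $b^n$, source $g^n := -b^n_i$ and terminal condition $v_T^n := 0$. Since $b^n \to b$ in $C_T\mathcal C^{-\beta}$ forces $g^n \to -b_i$ in $C_T\mathcal C^{-\beta}$ and the terminal conditions are identically zero (hence trivially convergent in $\mathcal C^{(2-\beta)-}$), Lemma~\ref{lm:continuityv}~(ii) applies with the single fixed $\lambda$ chosen in \eqref{eq:lambda2} and gives $u^n_i \to u_i$ in $C_T\mathcal C^{(2-\beta)-}$. Picking any exponent in $(1,2-\beta)$ and recalling that the corresponding H\"older norm \eqref{eq:gamma12} dominates $\|\cdot\|_\infty + \|\nabla(\cdot)\|_\infty$, this yields the uniform convergences $u^n \to u$ and $\nabla u^n \to \nabla u$ on $[0,T]\times\R^d$. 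Because $\phi^n - \phi = u^n - u$ and $\nabla\phi^n - \nabla\phi = \nabla u^n - \nabla u$ by \eqref{eq:phi}, the convergence $\phi^n \to \phi$, together with its gradient, is then immediate.

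The delicate point, and the main obstacle, is the convergence $\psi^n \to \psi$ of the spatial inverses. The key observation is that the choice \eqref{eq:lambda2} is designed so that Proposition~\ref{pr:PDEproperties}~(ii) holds \emph{simultaneously} for every $u^n$ and for $u$, whence the invertibility argument of Proposition~\ref{pr:PDEphi}~(ii) applies uniformly in $n$: there is a constant $c_0 > 0$, independent of $n$ and of $t$, such that
\[
|\phi^n(t,x) - \phi^n(t,x')| \ge c_0\, |x - x'| \qquad \text{for all } x,x'\in\R^d,
\]
and the same lower bound holds for $\phi$. Fixing $(t,y)$ and setting $x^n := \psi^n(t,y)$ and $x := \psi(t,y)$, so that $\phi^n(t,x^n) = y = \phi(t,x)$, I would then estimate
\[
c_0\, |x^n - x| \le |\phi(t,x^n) - \phi(t,x)| = |\phi(t,x^n) - \phi^n(t,x^n)| = |u(t,x^n) - u^n(t,x^n)| \le \|u - u^n\|_\infty,
\]
where the first equality uses $\phi(t,x) = \phi^n(t,x^n)$ and the second uses $\phi = \mathrm{id} + u$ and $\phi^n = \mathrm{id} + u^n$. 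Taking the supremum over $(t,y)$ gives $\|\psi^n - \psi\|_\infty \le c_0^{-1}\|u^n - u\|_\infty \to 0$, completing item~(i). I expect this inversion step to be where the real work lies, since it is the only place where the sup-norm control of the gradients must be converted into control of the inverse maps; the uniform lower bound $c_0$ furnished by \eqref{eq:lambda2} is exactly what makes the estimate robust in $n$.

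Finally, item~(ii) follows from bounds already in hand. The uniform gradient bound of Proposition~\ref{pr:PDEproperties}~(ii) gives $\|\nabla\phi^n\|_\infty = \|\mathrm{I} + \nabla u^n\|_\infty \le C$ with $C$ independent of $n$, while $|\phi^n(0,0)| = |u^n(0,0)| \le \|u^n\|_\infty$ is bounded in $n$ because the uniform convergence $u^n \to u$ makes $(\|u^n\|_\infty)_n$ a bounded sequence (alternatively, Lemma~\ref{lm:tauv} combined with the boundedness of $(\|b^n\|_{C_T\mathcal C^{-\beta}})_n$ and the monotonicity of $R_\lambda$ yields a quantitative uniform bound).
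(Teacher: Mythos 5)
Your proposal is correct and follows essentially the same route as the paper: item (i) via Lemma \ref{lm:continuityv}~(ii) applied with $g^n=-b^n_i$, $v_T^n=0$, and item (ii) from the uniform gradient bound of Proposition \ref{pr:PDEproperties}~(ii) together with the boundedness of $(u^n)_n$. Your inversion step is the paper's computation in mildly rephrased form --- where the paper writes $|x_n(t)-x(t)|\le \sup|\nabla u_n|\,|x_n(t)-x(t)|+\sup|u_n-u|$ and absorbs the first term using $\sup|\nabla u^n|\le \tfrac12$, you state the equivalent bi-Lipschitz lower bound $|\phi(t,x)-\phi(t,x')|\ge \tfrac12|x-x'|$ (valid since the max in \eqref{eq:lambda2} covers $\|b\|_{C_T\mathcal C^{-\beta+\eps}}$ as well), leading to the same conclusion $\|\psi^n-\psi\|_\infty\le 2\|u^n-u\|_\infty$.
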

\begin{proof}
We choose $\lambda $ according to \eqref{eq:lambda2}  as done in \eqref{eq:lambda}. This implies
 \begin{equation}\label{eq:supnablau}
\sup_{(t,x)\in[0,T]\times\R^d} |\nabla u^n(t,x)| \leq \frac12
 \end{equation}
 by Proposition 
\ref{pr:PDEproperties} part (ii).  

{\em Item (i)} 
By Lemma \ref{lm:continuityv} part (ii) we have   $u_{n}\rightarrow u$ in $C_T \mathcal C^{\alpha+1}
$ thus  $u_{n}\rightarrow u$ and $\nabla u_{n}\rightarrow\nabla u$, uniformly on
$\left[  0,T\right]  \times\mathbb{R}^{d}$.
 Since $\phi_{n}-\phi
=u_{n}-u$, then also $\phi_{n}\rightarrow\phi$ uniformly on
$\left[  0,T\right]  \times\mathbb{R}^{d}$. 

The rest of the proof follows the same ideas  of \cite[Lemma 24, part (iii)]{flandoli_et.al14}. We recall the basic elements of the  proof for ease of reading. 
Let us prove the uniform
convergence of $\psi_n$ to $\psi$. Given $y\in\mathbb{R}^{d}$, we know that for every $t\in\left[  0,T\right]  $
and $n\in\mathbb{N}$ there exist $x\left(  t\right)  ,x_{n}\left(  t\right)
\in\mathbb{R}^{d}$ such that
\begin{align*}
x\left(  t\right)  +u\left(  t,x\left(  t\right)  \right)    & =y\\
x_{n}\left(  t\right)  +u_{n}\left(  t,x_{n}\left(  t\right)  \right)    & =y
\end{align*}
and we have called $x\left(  t\right)  $ and $x_{n}\left(  t\right)  $ by
$\psi\left(  t,y\right)  $ and $\psi_n\left(  t,y \right)  $
respectively. Then from \eqref{eq:supnablau}  we get
\begin{align*}
\left\vert x_{n}\left(  t\right)  -x\left(  t\right)  \right\vert  
\leq &\sup_{(t,x) \in [0,T]\times \mathbb R^d}\left\vert \nabla u_{n}(t,x)\right\vert \left\vert x_{n}\left(
t\right)  -x\left(  t\right)  \right\vert \\
&+\sup_{(t,x) \in [0,T]\times \mathbb R^d}\left\vert u_{n}(t,x)-u(t,x)\right\vert\\
\Rightarrow\left\vert x_{n}\left(  t\right)  -x\left(  t\right)  \right\vert   \leq&2 \sup_{(t,x) \in [0,T]\times \mathbb R^d} \left\vert u_{n}(t,x)-u(t,x)\right\vert,
\end{align*}
 namely%
\[
\left\vert \psi_n\left(  t,y\right)  -\psi\left(
t,y\right)  \right\vert \leq2 \sup_{(t,x) \in [0,T]\times \mathbb R^d} \left\vert u_{n}(t,x)-u(t,x)\right\vert,
\]
which implies that $\psi_n\rightarrow\psi$ uniformly on
$\left[  0,T\right]  \times\mathbb{R}^{d}$.

{\em Item (ii)}
 To show that $ \|\nabla \phi^n\|_\infty $ is bounded uniformly in $n$ we simply observe that $\nabla \phi^n(t,x)= \text{id} + \nabla u^n(t, x)$ and use \eqref{eq:supnablau}.

To prove that $|\phi^n(0,0)|= |u^n(0,0)|$ is uniformly bounded we observe that $u^n \to u$ in   $C_TD\mathcal C^{(1-\beta)-}$ by Lemma \ref{lm:continuityv} part (i), hence there exists $\alpha <1-\beta$  such that  $u^n \to u$ in   $C_TD\mathcal C^{\alpha}$ and so 
\[
\sup_{n\geq1} |u^n(0,0)| \leq c \sup_{n\geq1} \|u^n\|_{C_T D\mathcal C^{\alpha} },
\]
which concludes the proof. 
\end{proof}

\section{On some separable Besov-H\"older type spaces}\label{sc:separable}

In the companion paper \cite{issoglio_russoMPb} we  use a special class of PDEs like \eqref{eq:parabolicPDE} for some applications in stochastic analysis. In particular, the PDE plays a role   in the formulation of the martingale problem for stochastic differential equations with distributional drifts $b$.  For more details on the latter, see  \cite[Section 4]{issoglio_russoMPb}. The class of PDEs that we  use in \cite{issoglio_russoMPb} are PDEs of the form
$\mathcal L f = g,$ where the element $g$ is a function (instead of a distribution) that, most importantly,  lives in a space which is separable. The spaces $C_T \mathcal C^{0+}$ would be the natural choice since  it contains only functions,  but it is not separable. It would be separable  if  one restricted them to functions with compact support, however the class   $C_T \mathcal C_c^{0+}$ of functions in  $C_T \mathcal C^{0+}$  with compact support  is not closed under the topology of $C_T \mathcal C^{0+}$ and not rich enough for our purpose. Thus here we introduce and investigate a further class of function spaces, namely the closure of $C_T \mathcal C_c^{0+}$ with respect to the topology of  $C_T \mathcal C^{0+}$. These spaces turn out to be separable and rich enough to be  used in our application to stochastic analysis.
In this section, we prove some useful results about these space, most importantly separability.

\begin{lemma}\label{lm:lemmino}
Let $f$ be a Schwartz distribution with compact support. 
We have  $f \ast p_t \in \mathcal S$ for all $t>0$. 
\end{lemma}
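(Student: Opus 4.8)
The plan is to combine the finite-order structure of compactly supported distributions with the super-polynomial decay of the heat kernel and all its derivatives. Write $\mathrm{supp}(f)\subset K$ for some compact set $K$, say $K\subset B_R$. Since a distribution with compact support has finite order, there exist a constant $C>0$ and an integer $N\in\N$ such that $|\langle f,\varphi\rangle|\leq C\sup_{|\gamma|\leq N,\,y\in K}|D^\gamma\varphi(y)|$ for every $\varphi\in C^\infty(\R^d)$; this is the $\mathcal E'$-analogue of the global bound \eqref{eq:rudin}. Using that $p_t$ is even, the convolution is the function $(f\ast p_t)(x)=\langle f(y),p_t(x-y)\rangle$, which is well-defined for each $x$ because $y\mapsto p_t(x-y)$ is smooth. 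Since $p_t\in\mathcal S\subset C^\infty$, a standard differentiation-under-the-pairing argument (justified by passing to difference quotients, which converge in the topology of $C^\infty(K)$) shows that $f\ast p_t\in C^\infty(\R^d)$ and that for every multi-index $\alpha$ one has $D^\alpha(f\ast p_t)(x)=\langle f(y),(D^\alpha p_t)(x-y)\rangle$.

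First I would insert the order bound into this last identity. Since $D_y^\gamma[(D^\alpha p_t)(x-y)]=(-1)^{|\gamma|}(D^{\gamma+\alpha}p_t)(x-y)$, we obtain
\[
|D^\alpha(f\ast p_t)(x)|\leq C\sup_{|\gamma|\leq N,\,y\in K}|(D^{\gamma+\alpha}p_t)(x-y)|.
\]
Next I would use the explicit form of the derivatives of the Gaussian: for each multi-index $\delta$ there is a polynomial $P_\delta$ (depending on $t$), which we may take nondecreasing in $|z|$, with $|(D^\delta p_t)(z)|\leq P_\delta(|z|)\,e^{-|z|^2/(2t)}$. For $y\in K\subset B_R$ and $|x|\geq R$ one has $|x|-R\leq|x-y|\leq|x|+R$ by the triangle inequalities, so uniformly in $y\in K$,
\[
|(D^{\gamma+\alpha}p_t)(x-y)|\leq P_{\gamma+\alpha}(|x|+R)\,e^{-(|x|-R)^2/(2t)}.
\]

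Combining the two displays, $|D^\alpha(f\ast p_t)(x)|$ is bounded, for $|x|\geq R$, by $C\max_{|\gamma|\leq N}P_{\gamma+\alpha}(|x|+R)\,e^{-(|x|-R)^2/(2t)}$, which decays faster than any power of $|x|$; for $|x|<R$ it is bounded by continuity. Hence $\sup_x|x^\beta D^\alpha(f\ast p_t)(x)|<\infty$ for all multi-indices $\alpha,\beta$, which is exactly the statement that $f\ast p_t\in\mathcal S$. There is no serious obstacle here: the only point requiring care is the uniform Gaussian tail estimate over the compact set $K$, which is what converts the finite-order bound into rapid decay, while the smoothness of the convolution and the interchange of $D^\alpha$ with the pairing are standard facts about compactly supported distributions.
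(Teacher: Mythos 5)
Your proof is correct, but it takes a genuinely different route from the paper. The paper works entirely on the Fourier side: it invokes Schwartz's structure theorem to write the compactly supported distribution as a finite sum $\sum_\nu \partial^\nu h$ with $h$ continuous and compactly supported, and then checks that $\mathcal F(\partial^\nu h \ast p_t) = \mathcal F(h)\,\mathcal F(\partial^\nu p_t) \in \mathcal S$, using $\mathcal F h \in C_b^\infty$ and $\mathcal F(\partial^\nu p_t) \in \mathcal S$. You instead stay in physical space: you use the finite-order seminorm estimate for $f \in \mathcal E'$, differentiate under the pairing, and convert the Gaussian decay of $D^{\gamma+\alpha}p_t$, uniformly over the compact support, into rapid decay of all derivatives of $f \ast p_t$. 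Each approach has its advantages: the paper's argument is shorter once the structure theorem is granted and sidesteps all pointwise decay estimates, whereas yours is more self-contained (it needs only the finite-order property, not the representation theorem) and is quantitatively stronger, since it shows that $f \ast p_t$ together with all its derivatives actually has Gaussian-type decay, not merely that it lies in $\mathcal S$. One small point of care in your write-up: the estimate $|\langle f,\varphi\rangle| \leq C \sup_{|\gamma|\leq N,\, y\in K} |D^\gamma \varphi(y)|$ for $f \in \mathcal E'$ holds when $K$ is a compact \emph{neighborhood} of $\mathrm{supp}(f)$, not in general with $K = \mathrm{supp}(f)$ itself; since you only use that $\mathrm{supp}(f) \subset K \subset B_R$, you should simply fix $K$ to be such a neighborhood, after which the rest of your argument goes through unchanged. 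Also, the evenness of $p_t$ is not needed for the identity $(f\ast p_t)(x) = \langle f(y), p_t(x-y)\rangle$, which is just the definition of convolution with a smooth function.
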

\begin{proof}
We will show that the Fourier transform  $ \mathcal F( p_t \ast f)$   of $ p_t \ast f$ is in $\mathcal S$. 
Since $f$ is a  compactly supported  Schwartz distribution we apply 
 \cite[Theorem 26, page 91]{schwartz} to write    $ f$   as the finite sum $\sum_\nu \partial^\nu h$ with $h$ some continuous function with compact support. By linearity it is enough to show that $\mathcal F (\partial^\nu h \ast p_t) \in \mathcal S$, where $h$ some continuous function with compact support. In this case we have   
 \[
\mathcal F( \partial^\nu h  \ast p_t) =    \mathcal F(h \ast \partial^\nu  p_t)  = \mathcal F(h ) \mathcal F( \partial^\nu  p_t),
 \]
and this belongs to $\mathcal S$ since $\mathcal F \partial^\nu p_t \in \mathcal S$ and $ \mathcal F h \in C_b^\infty$ by an easy calculation.    
\end{proof}

We denote by $C_c=C_c(\R^d)$ the space of $\R^d$-valued continuous functions with compact support.
For $\gamma\geq0$ we denote by $\mathcal C_c^{\gamma}=\mathcal C_c^{\gamma}(\R^d)$ the space of elements in $\mathcal C^{\gamma}$ with compact support. Similarly when $\gamma$ is replaced by $\gamma+$ or $\gamma-$, for $\gamma\geq 0$. When defining the domain of the martingale problem  we will work with spaces of functions which are the limit of functions with compact support, so that they are Banach space. More precisely, let us denote by $\bar{\mathcal C}_c^\gamma  = \bar{\mathcal C}_c^\gamma (\R^d)$ the space  
\[
\bar{\mathcal C}_c^\gamma := \{f \in {\mathcal C}^\gamma \text{ such that } \exists (f_n)_n \subset {\mathcal C}_c^\gamma \text{ and } f_n \to f \text{ in }  {\mathcal C}^\gamma\}.
\]
As above we denote the inductive space and intersection space as
\[
\bar {\mathcal C}_c^{\gamma+}:= \cup_{\alpha >\gamma} \bar{\mathcal C}_c^{\alpha} ,  \qquad  
\bar {\mathcal C}_c^{\gamma-}:= \cap_{\alpha <\gamma} \bar{\mathcal C}_c^{\alpha}.
\]
We also introduce the space $C_T \bar {\mathcal C}_c^{\gamma+}$ and observe that  $f \in C_T \bar {\mathcal C}_c^{\gamma+}$ if and only if there exists $\alpha>\gamma $ such that $f\in C_T \bar{\mathcal C}_c^{\alpha}$, 
by \cite[Remark B.1 part (ii)]{issoglio_russoMK}.

We will state and prove several useful properties of such spaces. Let us start by showing that $C_T\bar{\mathcal C}_c^\gamma $ is an algebra.

\begin{proposition}
The space $C_T \bar{\mathcal C}_c^\gamma $ is an algebra for $\gamma\in(0,1)$. 
\end{proposition}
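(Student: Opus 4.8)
The plan is to reduce everything to the fact that, for $\gamma\in(0,1)$, the space $\mathcal C^\gamma$ is a Banach algebra under pointwise multiplication, and then to verify that neither taking the closure $\bar{\mathcal C}_c^\gamma$ nor passing to continuous time-dependent functions destroys this algebra structure.

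The first step is the multiplicative estimate. Working with the explicit $\gamma$-H\"older norm \eqref{eq:gamma01}, for $f,g\in\mathcal C^\gamma$ one has $\|fg\|_\infty\le\|f\|_\infty\|g\|_\infty$, while for the seminorm part the bound $|f(x)g(x)-f(y)g(y)|\le|f(x)|\,|g(x)-g(y)|+|g(y)|\,|f(x)-f(y)|$ gives control by $\|f\|_\infty[g]_\gamma+\|g\|_\infty[f]_\gamma$. Together these yield $\|fg\|_\gamma\le c\|f\|_\gamma\|g\|_\gamma$, so $\mathcal C^\gamma$ is a Banach algebra. This is completely elementary and needs no Bony paraproduct, since both factors are positively regular.

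Next I would check that $\bar{\mathcal C}_c^\gamma$ is a closed subalgebra. It is closed by definition and is a subspace, so the only point is stability under products. Given $f,g\in\bar{\mathcal C}_c^\gamma$, choose approximating sequences $f_n,g_n\in\mathcal C_c^\gamma$ with $f_n\to f$ and $g_n\to g$ in $\mathcal C^\gamma$. Each product $f_ng_n$ again has compact support and lies in $\mathcal C^\gamma$, hence in $\mathcal C_c^\gamma$, and the multiplicative estimate gives
\[
\|f_ng_n-fg\|_\gamma\le c\|f_n-f\|_\gamma\|g_n\|_\gamma+c\|f\|_\gamma\|g_n-g\|_\gamma\to0,
\]
using that $\|g_n\|_\gamma$ is bounded. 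Therefore $fg\in\bar{\mathcal C}_c^\gamma$.

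Finally, for the time-dependent statement let $f,g\in C_T\bar{\mathcal C}_c^\gamma$. For each fixed $t$ the previous step gives $f(t)g(t)\in\bar{\mathcal C}_c^\gamma$, and continuity in $t$ follows from
\[
\|f(t)g(t)-f(s)g(s)\|_\gamma\le c\|f(t)-f(s)\|_\gamma\|g(t)\|_\gamma+c\|f(s)\|_\gamma\|g(t)-g(s)\|_\gamma,
\]
which tends to $0$ as $s\to t$ because $f,g$ are continuous and $\|f(\cdot)\|_\gamma,\|g(\cdot)\|_\gamma$ are bounded on the compact interval $[0,T]$. Hence $fg\in C_T\bar{\mathcal C}_c^\gamma$, which is the claim. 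The main (and rather mild) obstacle is simply that taking closures and taking products need not a priori commute; this is resolved precisely by the multiplicative inequality combined with the uniform boundedness of the approximating norms.
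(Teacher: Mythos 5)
Your proof is correct, and it is organised differently from the paper's, even though the central computation is the same. The paper works directly at the level of time-dependent functions: invoking \cite[Remark B.1]{issoglio_russoMK}, it extracts approximating sequences $f_n,g_n\in C_T\mathcal C_c^\gamma$ with $f_n\to f$, $g_n\to g$ in $C_T\mathcal C^\gamma$, and then proves $f_ng_n\to fg$ in $C_T\mathcal C^\gamma$ via the same splitting you use, $f_ng_n-fg=(f_n-f)g_n+f(g_n-g)$, estimating the sup-norm and the H\"older seminorm of \eqref{eq:gamma01} uniformly in $t$. You instead factor the argument into three layers: the elementary Banach-algebra inequality $\|fg\|_\gamma\le c\|f\|_\gamma\|g\|_\gamma$ on $\mathcal C^\gamma$; the fact that $\bar{\mathcal C}_c^\gamma$ is a closed subalgebra, proved by approximation at a fixed time; and continuity of $t\mapsto f(t)g(t)$ deduced directly from the multiplicative inequality together with boundedness of $\|f(\cdot)\|_\gamma,\|g(\cdot)\|_\gamma$ on $[0,T]$. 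What your route buys is self-containedness: you never need the nontrivial fact that an element of $C_T\bar{\mathcal C}_c^\gamma$ admits compactly supported approximants \emph{uniformly in time} (the content of the cited Remark B.1) --- fixed-time approximants, which exist by the very definition of $\bar{\mathcal C}_c^\gamma$, suffice. What the paper's route buys is a slightly stronger byproduct, namely an explicit approximation of the product $fg$ by elements of $C_T\mathcal C_c^\gamma$ in the $C_T\mathcal C^\gamma$ topology. One small step you leave implicit, harmlessly, is the passage from ``$f(t)g(t)\in\bar{\mathcal C}_c^\gamma$ for each $t$ and $t\mapsto f(t)g(t)$ continuous into $\mathcal C^\gamma$'' to ``$fg\in C([0,T];\bar{\mathcal C}_c^\gamma)$''; this is immediate because $\bar{\mathcal C}_c^\gamma$ carries the norm induced by $\mathcal C^\gamma$, but it is worth a line. (A symmetric remark applies to the paper's proof, which tacitly uses that a $C_T\mathcal C^\gamma$-limit of functions in $C_T\mathcal C_c^\gamma$ lies in $C_T\bar{\mathcal C}_c^\gamma$.)
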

\begin{proof}
  Let $f, g\in C_T\bar{\mathcal C}_c^\gamma $.
  By \cite[Remark  B.1]{issoglio_russoMK}, we know that there exists a sequence $(f_n)_n \subset C_T \mathcal C_c^\gamma $ (resp.\ $(g_n)_n$) such that $f_n \to f$  (resp.\ $g_n \to g$) in $C_T \mathcal C^\gamma$. Clearly $f_n g_n \in C_T \mathcal C_c^\gamma$ so it remains to  show that  $f_ng_n \to fg$ in $C_T\mathcal C^\gamma$. 
We have $f_n g_n -fg = (f_n - f)g_n + f(g_n-g)$ so it is enough to show that $(f_n - f)g_n  \to 0$ and  $f(g_n-g)\to 0$ in $C_T \mathcal C^\gamma$. We show the first term only, as the second can be handled the same (but easier). Using the norm \eqref{eq:gamma01} we need to bound two terms. 
The first one is $\sup_{t\in[0,T]}\|(f_n(t, \cdot) - f(t, \cdot))g_n(t, \cdot)\|_\infty$ and it clearly converges to 0
 by assumptions on $f_n, g_n$. As for the H\"older seminorm      for all $t\in[0,T]$ we have
 \begin{align*}  
 &|(f_n - f)(t, x)  g_n(t, x) - (f_n- f)(t, y)g_n(t, y)| \\
 &\leq |[(f_n - f)(t, x) - (f_n- f)(t, y)]g_n(t, x)| 
\\ 
& + | (f_n- f)(t, y)[g_n(t, x)- g_n(t, y)]|\\
&\leq \|f_n - f\|_{C_T \mathcal C^\gamma} |x-y|^{\gamma} \sup_{t,x} |g_n(t, x)|\\
&+\sup_{t,x} | (f_n- f)(t, x)| \|g_n\|_{C_T \mathcal C^\gamma} |x-y|^{\gamma}.
\end{align*}
Using this we conclude that $$\sup_{t\in[0,T]} \sup_{x\neq y}\frac{|(f_n - f)(t, x)  g_n(t, x) - (f_n- f)(t, y)g_n(t, y)|}{|x-y|^{\gamma}} \to 0,$$
by the fact that $f_n \to f$ uniformly and $ \|f_n - f\|_{C_T \mathcal C^\gamma}$ and $\|g_n\|_{C_T \mathcal C^\gamma} $ are bounded. 
\end{proof}

\begin{lemma}\label{lm:SCgamma}
We have \begin{equation}\label{eq:SSS}
\mathcal S \subset \bar{\mathcal C}_c^{\gamma+}
\end{equation}
 for $\gamma \in \R$.
 In particular, $\mathcal S$ is included in the
 closure $\bar C_c$ of the space of continuous functions with compact support  $C_c  $  with respect to the topology of uniform convergence. 
\end{lemma}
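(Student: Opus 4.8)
The plan is to fix $f\in\mathcal S$ and prove the stronger statement that $f\in\bar{\mathcal C}_c^{\alpha}$ for \emph{every} $\alpha\in\R$; since $\bar{\mathcal C}_c^{\gamma+}=\cup_{\alpha>\gamma}\bar{\mathcal C}_c^{\alpha}$ by definition, this immediately yields \eqref{eq:SSS} for all $\gamma$. To build the approximating sequence I would fix a cut-off $\chi\in C_c^\infty(\R^d)$ with $\chi\equiv1$ on the unit ball and $\mathrm{supp}\,\chi\subset B(0,2)$, set $\chi_n(x):=\chi(x/n)$, and define $f_n:=\chi_n f$. Each $f_n$ is smooth with compact support, hence $f_n\in\mathcal C_c^{\alpha}$ for all $\alpha$, so the whole matter reduces to checking that $f_n\to f$ in $\mathcal C^{\alpha}$.

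The cleanest route is to establish the single, index-free convergence $f_n\to f$ in the Schwartz topology of $\mathcal S$, and then transfer it to each $\mathcal C^\alpha$ via a continuous embedding. I would write $f-f_n=(1-\chi_n)f$ and expand $D^\nu[(1-\chi_n)f]$ by Leibniz. The term carrying all derivatives on $f$ is $(1-\chi_n)D^\nu f$, supported in $\{|x|\geq n\}$, where $x^\beta D^\nu f$ decays faster than any power; the remaining terms carry at least one derivative of $\chi_n$, and the scaling $D^\mu\chi_n=n^{-|\mu|}(D^\mu\chi)(\cdot/n)$ supplies an extra factor $n^{-|\mu|}$ together with a support contained in $\{n\leq|x|\leq 2n\}$, where $f$ and its derivatives are again rapidly decaying. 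Collecting these, every Schwartz seminorm $\sup_x|x^\beta D^\nu(f-f_n)(x)|$ tends to $0$ as $n\to\infty$.

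To pass from $\mathcal S$ to $\mathcal C^\alpha$ I would use that the embedding $\mathcal S\hookrightarrow\mathcal C^\alpha$ is continuous for every $\alpha$. From the definition \eqref{eq:Cgamma}, the Littlewood--Paley block $\mathcal F^{-1}(\varphi_j\mathcal F g)$ satisfies $\|\mathcal F^{-1}(\varphi_j\mathcal F g)\|_\infty\leq\|\varphi_j\mathcal F g\|_{L^1}$, and since $\mathcal F g\in\mathcal S$ obeys $|\mathcal F g(\xi)|\leq C_M(1+|\xi|)^{-M}$ for every $M$, the annular support of $\varphi_j$ gives $2^{j\alpha}\|\mathcal F^{-1}(\varphi_j\mathcal F g)\|_\infty\leq C\,2^{-j(M-d-\alpha)}$, so $\|g\|_{\alpha}$ is bounded by finitely many Schwartz seminorms of $g$ once $M\geq d+\alpha$. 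Applying this to $g=f-f_n$ shows $f_n\to f$ in $\mathcal C^\alpha$, whence $f\in\bar{\mathcal C}_c^{\alpha}$ and \eqref{eq:SSS} follows. The ``in particular'' assertion is then obtained by specialising to $\gamma=0$: for $\alpha>0$ one has $\|\cdot\|_\infty\leq\|\cdot\|_\alpha$, so $\mathcal C^\alpha$-convergence entails uniform convergence, and since each $f_n\in C_c$ the limit $f$ lies in the uniform closure $\bar C_c$.

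The main obstacle is the bookkeeping in the Leibniz expansion of the second step: one must balance the polynomial growth coming from the weight $x^\beta$ and from differentiating $\chi_n$ against the rapid decay of $f$ on $\{|x|\geq n\}$, uniformly over all multi-indices of bounded order. This is routine but should be organised carefully; everything else (the explicit embedding constant, and the reduction to a single $\alpha>\gamma$) is then essentially bookkeeping.
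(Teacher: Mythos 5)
Your proof is correct, and while it shares the paper's basic skeleton (truncate $f$ by a cut-off $\chi_n$ supported near infinity and exploit the rapid decay of $f$ there), the mechanism by which you establish $\mathcal C^\alpha$-convergence is genuinely different. The paper uses a \emph{translated} fixed-width cut-off $\chi_n(x)=\chi(|x|-(n+1))$, whose derivatives are merely bounded uniformly in $n$, and then verifies convergence directly in the classical H\"older norms \eqref{eq:gamma01}--\eqref{eq:gamma12}: Step 1 proves uniform convergence of $D^m(f_n-f)$ for every multi-index $m$ using only the decay of $f$ on $\{|x|\geq n\}$, and Step 2 disposes of the H\"older seminorm by bounding the quotient by $\|\nabla D^m(f(1-\chi_n))\|_\infty\,|x-y|^{1-\alpha}$ (finite increments), reducing back to Step 1. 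You instead use a \emph{dilated} cut-off $\chi(\cdot/n)$, prove the stronger statement that $f_n\to f$ in the full Schwartz topology, and transfer via the continuous embedding $\mathcal S\hookrightarrow\mathcal C^\alpha$, which you prove by a Littlewood--Paley estimate straight from the definition \eqref{eq:Cgamma}. Your route buys uniformity: it treats all indices at once, including integer $\alpha$ where $\mathcal C^\alpha$ is the Zygmund space and the classical-norm argument of the paper does not directly apply (immaterial for the inductive spaces $\bar{\mathcal C}_c^{\gamma+}$, since one may always choose a noninteger $\alpha>\gamma$, which is implicitly what the paper does), and it isolates a reusable embedding fact. The paper's route buys elementarity: no Fourier analysis, only the decay of $f$ and the mean value theorem. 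Two small points to tidy in your write-up: the paper defines $\mathcal C_c^\alpha$ and $\bar{\mathcal C}_c^\alpha$ only for $\alpha\geq 0$, so you should claim $f\in\bar{\mathcal C}_c^\alpha$ for every $\alpha>\max(\gamma,0)$ rather than for every $\alpha\in\R$ (this loses nothing for \eqref{eq:SSS}); and in the embedding estimate the block $j=0$ has ball rather than annular support, so it must be bounded separately by $\|\varphi_0\,\mathcal F g\|_{L^1}$ --- both are trivial repairs, not gaps.
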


\begin{proof}
It is enough to show the claim for  every $\gamma\geq0$.
We only prove \eqref{eq:SSS} since the closure of the space of continuous functions with compact support  $C_c (\R^d)$  with respect to the topology of uniform convergence contains $\bar{\mathcal C}_c^{\gamma+}$.

Let $\chi: \mathbb R \to \R_+$ be a smooth function such that 
\[
\chi (x) =\left\{
\begin{array}{ll}
0 \quad &x\geq0\\
1 \quad &x\leq-1\\
\in (0,1) \quad  & x\in(-1,0). 
\end{array}
\right.
\]
We set  $\chi_n: \R^d \to \R$ as
$
\chi_n(x) : = \chi (|x| - (n+1)).
$
In particular 
\[
\chi_n (x) =\left\{
\begin{array}{ll}
0 \quad &|x|\geq n+1\\
1 \quad &|x|\leq n\\
\in(0,1) \quad  & \text{otherwise}.
\end{array}
\right.
\]
Let $f \in\mathcal S$. We  set $f_n( x) := f( x) \chi_n(x) $. Clearly $f_n \in \mathcal C_c^{\gamma+}$.

{\em Step 1.}
For any multi-index $m$  we first  show that $D^m f_n \to D^m f$ uniformly.

Notice that $D^m (f_n-f) = D^m( f (1-\chi_n)) $ is  a finite sum of terms of the form $D^l f D^{k} (1-\chi_n) $ for some finite $|l|,|k| \leq|m|$. 
One can show that   
$\sup_x |D^k (1-\chi_n)(x)|\leq \|D^k  \chi \|_\infty$ by the definition of $\chi_n$. Let  $\eps>0$. Since  $f \in \mathcal S$ 
there  exists $n(\eps)$ such that for all $|x|>n(\eps)$ then $|D^l f(x)|<\eps$ for all $l$ such that  $|l|\leq |m|$. 
For $|x|>n(\eps)$ we have 
\[
|D^l f (x) D^{k} (1-\chi_n)(x) |\leq \|D^k  \chi \|_\infty \eps.
\]
This shows uniform convergence of $D^l f D^{k} (1-\chi_n) $ to 0, hence uniform convergence of $D^m (f_n-f)$ to zero. 

{\em Step 2.} 
Let $\alpha \in (0,1)$. For any multi-index $m$  it remains to show that 
\[
\sup_{|x-y|<1} \frac{|D^m (f(1-\chi_n))(x)- D^m (f(1-\chi_n))(y) |}{|x-y|^\alpha}
\]
converges to 0 as $n\to \infty $. We clearly  have that 
\[
\frac{|D^m (f(1-\chi_n))(x)- D^m (f(1-\chi_n))(y) |}{|x-y|^\alpha} \leq \|\nabla D^m (f(1-\chi_n)) \|_\infty |x-y|^{1-\alpha}
\]
by finite increments theorem, hence we reduce to Step 1.
\end{proof}


\begin{lemma}\label{lm:dense}
\begin{itemize}
\item[(i)]   For any $\gamma\in \R$ the space  ${\mathcal S}$ is dense in $\bar{\mathcal C}_c^{\gamma+}$.
\item[(ii)]  ${\mathcal S}$ is dense in $\bar{ C}_c$.
\end{itemize}
\end{lemma}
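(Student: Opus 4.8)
The plan is to prove both parts with the same device: mollify with the heat kernel, which by Lemma \ref{lm:lemmino} turns any compactly supported distribution into a Schwartz function, and then control the approximation error using Schauder's estimate \eqref{eq:Pt-I} in part (i) and the approximate-identity property of $p_t$ in part (ii). The role of the inductive scale in (i) will be crucial: the mollifications $P_t g$ do \emph{not} converge in the $\mathcal C^\alpha$-norm they start in, so I will deliberately give up a small amount of regularity, which the $\gamma+$ scale accommodates.

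For part (i), I would first unwind the definitions. Given $f\in\bar{\mathcal C}_c^{\gamma+}$ there is, by \cite[Remark B.1]{issoglio_russoMK}, some $\alpha>\gamma$ (and $\alpha\geq0$) with $f\in\bar{\mathcal C}_c^\alpha$; then fix an intermediate exponent $\gamma<\alpha'<\alpha$ with $0\le\alpha'$ and $\alpha-\alpha'<2$. By the very definition of $\bar{\mathcal C}_c^\alpha$ there is a sequence $(g_k)_k\subset\mathcal C_c^\alpha$ with $g_k\to f$ in $\mathcal C^\alpha$, hence also in $\mathcal C^{\alpha'}$ since $\mathcal C^\alpha\subset\mathcal C^{\alpha'}$ continuously. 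It therefore suffices to approximate each $g_k$ by Schwartz functions in the $\mathcal C^{\alpha'}$-norm. Here I would set $\phi_{k,t}:=P_t g_k=p_t\ast g_k$: by Lemma \ref{lm:lemmino} (applied to the compactly supported distribution $g_k$) we have $\phi_{k,t}\in\mathcal S$, while \eqref{eq:Pt-I} with $\theta=(\alpha-\alpha')/2\in(0,1)$ gives $\|P_t g_k-g_k\|_{\alpha'}\leq c\,t^{(\alpha-\alpha')/2}\|g_k\|_\alpha$. Choosing $t_k\downarrow 0$ so that $\|P_{t_k}g_k-g_k\|_{\alpha'}<1/k$ and setting $\phi_k:=P_{t_k}g_k$, the triangle inequality yields $\|\phi_k-f\|_{\alpha'}\leq\|\phi_k-g_k\|_{\alpha'}+\|g_k-f\|_{\alpha'}\to 0$. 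Since $\alpha'>\gamma$ and all objects lie in $\bar{\mathcal C}_c^{\alpha'}$ (for $f$ because $\bar{\mathcal C}_c^\alpha\subset\bar{\mathcal C}_c^{\alpha'}$, and for the $\phi_k$ by Lemma \ref{lm:SCgamma}), convergence in $\mathcal C^{\alpha'}$ implies convergence in the inductive topology of $\bar{\mathcal C}_c^{\gamma+}$, which is the claim.

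For part (ii) I would run the identical scheme in the sup-norm. Given $f\in\bar C_c$, by definition there is $(g_k)_k\subset C_c$ with $\|g_k-f\|_\infty\to 0$. Each $g_k$ is uniformly continuous with compact support, so $p_t$ being a nonnegative approximate identity gives $\|P_t g_k-g_k\|_\infty\to 0$ as $t\downarrow 0$; concretely I would write $P_tg_k(x)-g_k(x)=\int p_t(y)\bigl(g_k(x-y)-g_k(x)\bigr)\di y$, split over $|y|<\delta$ and $|y|\ge\delta$, and use uniform continuity together with $\int_{|y|\ge\delta}p_t\to 0$. Again $P_t g_k\in\mathcal S$ by Lemma \ref{lm:lemmino}, so a diagonal choice $\phi_k:=P_{t_k}g_k$ converges to $f$ uniformly.

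The only genuinely delicate point is the regularity bookkeeping in (i): one must not hope for $\mathcal C^\alpha$-convergence of the mollifications, and the argument hinges on the freedom to land in $\mathcal C^{\alpha'}$ for some $\alpha'$ strictly between $\gamma$ and $\alpha$ with $\alpha-\alpha'<2$. This is exactly what the inductive space $\bar{\mathcal C}_c^{\gamma+}$ and \cite[Remark B.1]{issoglio_russoMK} provide, and it is why the statement is phrased for the $\gamma+$ scale rather than for a single $\mathcal C^\gamma$. Everything else is a routine diagonal-mollification argument.
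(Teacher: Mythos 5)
Your proposal is correct and follows essentially the same route as the paper: mollification by the heat semigroup, Lemma \ref{lm:lemmino} to get Schwartz approximants, and \eqref{eq:Pt-I} (respectively the approximate-identity property of $p_t$ in the sup-norm) for convergence, with the loss of a small amount of regularity absorbed by the inductive $\gamma+$ scale. The only difference is that you make explicit the diagonal step over the approximating sequence $(g_k)$ and the intermediate exponent $\alpha'$, which the paper compresses into the phrase ``we can reduce to the case $f\in\mathcal C_c^{\gamma+}$'' and the statement $P_\eps f\to f$ in $\mathcal C^{\gamma+}$.
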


\begin{proof}
{\em Item (i)}
We observe  that $\mathcal S \subset  \bar{\mathcal C}_c^{\gamma+}$, see  Lemma \ref{lm:SCgamma}.
Let $\gamma \in \R$ and  $f \in \bar{\mathcal C}_c^{\gamma+}$. 
 By the definition of the space we can reduce to the case $f\in \mathcal C_c^{\gamma+}$. 
We mollify $f$ using the heat semigroup $P_\eps$, that is we consider $P_\eps f =  p_\eps \ast f$ where $p_\eps$ is the heat kernel.  By Lemma \ref{lm:lemmino} we have  $P_\eps f \in \mathcal S$. 
By \eqref{eq:Pt-I} we also have that  
$P_\eps f \to f $ in $\mathcal C^{\gamma+}.$

{\em Item (ii)} 
 The result follows from the fact that $P_\eps f \to f$ uniformly, for $f\in C_c$ and that $\mathcal S \subset \bar C_c$ by Lemma \ref{lm:SCgamma}.
\end{proof}

The next three lemmata will be used below  to prove that the spaces are separable.

\begin{lemma}\label{lm:BernsteinBanach}
Let $f:[0,1] \to B$ where $(B, \| \cdot \|)$ is a Banach space. Then the sequence $(f_n)_n$ defined by 
$f_n (t) := \sum_{j=0}^n f(\frac jn ) t^j (1-t)^{n-j}{ n \choose j}$ converges uniformly to $f$.
\end{lemma}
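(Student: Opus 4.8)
The plan is to adapt the classical proof of the Bernstein approximation theorem to the Banach-valued setting; the only structural input beyond the scalar case is the triangle inequality, since the Bernstein weights $p_{n,j}(t) := \binom{n}{j} t^j (1-t)^{n-j}$ are nonnegative real numbers summing to one, so $f_n(t)$ is a genuine convex combination of the values $f(j/n)$. I would first note that the statement tacitly requires $f$ to be continuous; being continuous on the compact interval $[0,1]$, $f$ is then both bounded, say $\|f(t)\| \le M$ for all $t\in[0,1]$, and uniformly continuous.

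Using the binomial identity $\sum_{j=0}^n p_{n,j}(t) = (t + (1-t))^n = 1$, I would write $f(t) - f_n(t) = \sum_{j=0}^n \bigl(f(t) - f(\tfrac{j}{n})\bigr) p_{n,j}(t)$ and take norms, obtaining by the triangle inequality (using nonnegativity of the weights)
\[
\|f(t) - f_n(t)\| \le \sum_{j=0}^n \Bigl\| f(t) - f\bigl(\tfrac{j}{n}\bigr)\Bigr\| \, p_{n,j}(t).
\]
From here the argument is entirely scalar. Given $\eps > 0$, uniform continuity provides $\delta > 0$ such that $\|f(s) - f(t)\| < \eps$ whenever $|s-t| < \delta$. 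I would split the sum into the indices with $|j/n - t| < \delta$ and those with $|j/n - t| \ge \delta$. The first group contributes at most $\eps \sum_{j} p_{n,j}(t) = \eps$, while in the second group each summand is bounded by $2M\, p_{n,j}(t)$, so it remains to control $\sum_{|j/n - t| \ge \delta} p_{n,j}(t)$.

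The key quantitative estimate is the second-moment (Chebyshev) bound for the binomial weights: since $\sum_{j} (\tfrac{j}{n} - t)^2 p_{n,j}(t) = t(1-t)/n \le 1/(4n)$, one has
\[
\sum_{|j/n - t| \ge \delta} p_{n,j}(t) \le \frac{1}{\delta^2} \sum_{j=0}^n \Bigl(\tfrac{j}{n} - t\Bigr)^2 p_{n,j}(t) \le \frac{1}{4n\delta^2}.
\]
Hence $\|f(t) - f_n(t)\| \le \eps + M/(2n\delta^2)$ uniformly in $t\in[0,1]$, and choosing $n$ large makes the right-hand side smaller than $2\eps$, which gives uniform convergence. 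The only genuine ingredient is the variance identity $\sum_{j} (\tfrac{j}{n} - t)^2 p_{n,j}(t) = t(1-t)/n$, obtained either by differentiating the binomial expansion $(x+(1-t))^n$ twice and evaluating at $x=t$, or by reading off the mean $nt$ and variance $nt(1-t)$ of a binomial law. I expect this bookkeeping to be the only slightly delicate step; there is no real obstacle, because the Banach-valued case never requires more than the triangle inequality applied to the convex weights $p_{n,j}$, and all the quantitative work takes place among these scalar quantities.
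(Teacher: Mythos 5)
Your proof is correct and is essentially the same argument as the paper's: the paper merely dresses the identical estimate in probabilistic language, writing $f_n(t)=\E[f(S_n(t))]$ for a scaled binomial variable $S_n(t)$ and splitting the expectation over the events $\{|S_n(t)-t|\leq \delta\}$ and $\{|S_n(t)-t|>\delta\}$ via Chebyshev with $\mathrm{Var}(S_n(t))=t(1-t)/n$, which is exactly your index-set split and second-moment tail bound for the weights $p_{n,j}(t)$. Your explicit remark that continuity of $f$ is tacitly assumed is a fair observation, since the paper's statement omits it while its proof (like yours) uses uniform continuity on $[0,1]$.
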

\begin{proof}
The polynomials $(f_n)_n$ are also know as Bernstein polynomials, often denoted by $B_n(f,t)$ that is
\begin{equation}\label{eq:bern_pol}
B_n(f,t) := f_n(t): = \sum_{j=0}^n f(\frac jn ) t^j (1-t)^{n-j}{ n \choose j}.
\end{equation}
Bernstein polynomials have the property that they can be expressed as expectations of suitable random variables, which is useful in the computations below. In particular,
let $U_1, \ldots, U_n \sim U(0,1)$ be independent uniform r.v.s and let 
$$ S_n(t) := \frac 1n \sum_{j=1}^n \mathbbm{1}_{[0,t)} (U_j) .$$
Since $S_n(t)$ is a binomial r.v. with parameter $n$ and $t$,
then clearly
\begin{equation}\label{eq:bernstein}
B_n(f, t) = \mathbb E[f (S_n(t))].
\end{equation}

Let $\eps>0$. Since $f$ is uniformly continuous, there exists  $\delta>0$ such that if $|t-s|\leq \delta $ then $\|f(t)-f(s)\|\leq  \eps$. Let $t\in[0,1]$, by \eqref{eq:bernstein} we have
\begin{align*}
  \Vert f_n(t) - f(t)\Vert  =& \Vert \mathbb E[f(S_n(t)) - f(t) ]\Vert\\
\leq & \mathbb E[\Vert f(S_n(t)) - f(t)\Vert \mathbbm 1_{\{|S_n(t) -t|\leq \delta\}} ]\\
&+\mathbb E[\Vert f(S_n(t)) - f(t)\Vert \mathbbm 1_{\{|S_n(t) -t|> \delta\}} ]\\
=:& I_1(t) +I_2(t).
\end{align*} 
Now 
\[I_1(t) \leq \eps \mathbb P (|S_n(t) -t|\leq \delta) \leq \eps.\]
Concerning $I_2(t),$
being $S_n(t)$ a binomial random variable with parameter $n$ and $t$,
$$\text{Var}(S_n(t)) =
\frac1n (t-t^2). $$
Using this and by Chebyshev inequality  we get
\begin{align*}
I_2(t) \leq & 2 \|f\|_\infty \mathbb P ( |S_n(t) -t|> \delta)\\
\leq &  2 \|f\|_\infty \frac{\text{Var}(S_n(t) -t)}{\delta^2}\\
\leq &  2 \|f\|_\infty \frac{(t -t^2)}{n\delta^2}.
\end{align*}
Now taking the supremum over $t\in [0,1]$ we get 
$
\sup_{t\in[0,1]} I_2(t) \leq
\frac12 \|f\|_\infty \frac{1}{n\delta^2}
$
and putting this together with the bound for $I_1(t)$ we obtain 
\[
\limsup_{n\to \infty} \sup_{t\in[0,1]}\|f_n(t) - f(t)\| \leq \limsup_{n\to \infty} \sup_{t\in[0,1]} (I_1(t)+I_2(t))\leq \eps.
\]
Since $\eps >0$ is arbitrary, the  proof is concluded. 
\end{proof}

\begin{lemma}\label{lm:CTEsep}
Let $E$ be an inductive space of the form  $E= \cup_{\alpha \in \N} E_\alpha$, with $E_\alpha$ Banach space. 
If $E$ is separable then  $C_T E$ is separable. 
\end{lemma}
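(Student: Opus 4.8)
The plan is to exhibit an explicit countable dense subset of $C_T E$ built out of Bernstein polynomials (Lemma \ref{lm:BernsteinBanach}) whose coefficients are drawn from a countable dense subset of $E$. Since $E$ is separable, fix a countable dense set $D=\{d_k\}_{k\in\N}\subset E$ and define the countable family
\[
\mathcal P := \Big\{\, t\mapsto \sum_{j=0}^n d_{k_j}\binom nj\big(\tfrac tT\big)^j\big(1-\tfrac tT\big)^{n-j}\ :\ n\in\N,\ k_0,\dots,k_n\in\N \,\Big\}.
\]
This is a countable union (over $n$) of countable families, hence countable; the whole work is to prove that $\mathcal P$ is dense in $C_T E$.

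\textbf{Reduction to a fixed Banach space and exact Bernstein approximation.} Let $f\in C_T E$. First I would invoke \cite[Remark B.1]{issoglio_russoMK} to obtain an index $\alpha$ with $f\in C_T E_\alpha$, so that $f$ is continuous with values in the single Banach space $E_\alpha$. After rescaling time to $[0,1]$, Lemma \ref{lm:BernsteinBanach} applies to $s\mapsto f(Ts)$ and yields that the \emph{exact} Bernstein polynomials $B_n(f,\cdot)$, whose coefficients are the values $f(jT/n)\in E_\alpha$, converge to $f$ in $C_T E_\alpha$, and therefore also in $C_T E$ via the continuous inclusion $C_T E_\alpha\hookrightarrow C_T E$.

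\textbf{Coefficient replacement.} It then remains to replace the exact coefficients by elements of $D$ without leaving a given neighbourhood of $f$. The main obstacle is exactly here: $D$ is dense for the inductive topology of $E$, not for the norm of any single $E_\alpha$, so one cannot a priori approximate coefficients uniformly in a fixed Banach space. I would resolve this by \emph{fixing the degree first}. Given an open neighbourhood $U$ of $f$ in $C_T E$, the set $U\cap C_T E_\alpha$ is an open neighbourhood of $f$ in $C_T E_\alpha$, so by the convergence of Step 2 there is an $n$ (now fixed) with $B_n(f,\cdot)\in U$. With $n$ fixed there are only finitely many coefficients $c_0,\dots,c_n\in E_\alpha$; approximating each by a sequence from $D$ convergent in the norm of some $E_{\beta_j}$ and setting $\beta:=\max_{0\le j\le n}\beta_j$, the continuous inclusions $E_{\beta_j}\hookrightarrow E_\beta$ make all coefficients simultaneously approximable to arbitrary accuracy in the single norm of $E_\beta$. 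Since $U\cap C_T E_\beta$ is open and contains $B_n(f,\cdot)$, it contains a ball of some radius $\rho>0$; choosing $d_{k_j}\in D$ with $\|c_j-d_{k_j}\|_{E_\beta}<\rho$ and using that the Bernstein weights are nonnegative and sum to $1$, the resulting $P\in\mathcal P$ satisfies $\sup_{t\in[0,T]}\|P(t)-B_n(f,t)\|_{E_\beta}\le\max_j\|c_j-d_{k_j}\|_{E_\beta}<\rho$, whence $P\in U$. Thus every neighbourhood of $f$ meets $\mathcal P$, proving density.

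\textbf{Where the difficulty lies.} The heart of the argument is the order of the quantifiers: fixing $n$ (hence the finite set of coefficients) \emph{before} selecting the common space $E_\beta$ breaks the apparent circularity between the accuracy $\rho$ required (which depends on $\beta$) and the space $E_\beta$ in which one approximates (which depends on the coefficients). Lemma \ref{lm:BernsteinBanach} is what delivers convergence of the exact polynomials in the \emph{fixed} space $E_\alpha$, and the nonnegativity and partition-of-unity property of the Bernstein basis is what converts the finitely many coefficientwise bounds into a single uniform-in-$t$ estimate.
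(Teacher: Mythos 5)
Your proposal is correct and takes essentially the same route as the paper: approximate $f$ by Bernstein polynomials via Lemma \ref{lm:BernsteinBanach} after locating $f$ in a single step $C_T E_\alpha$ using \cite[Remark B.1]{issoglio_russoMK}, then replace the finitely many coefficients by elements of a countable dense subset of $E$. Your explicit treatment of the coefficient-replacement step (fixing the degree $n$ first, then passing to a common step $E_\beta$ and exploiting that the Bernstein weights are nonnegative and sum to one) simply spells out the bookkeeping that the paper compresses into its final sentence.
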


\begin{proof}
Without loss of generality we choose $T=1$. Let $f\in C_TE$ and we consider the functions 
$$f_n (t) := \sum_{j=0}^n f (\frac jn) t^j (1-t)^{n-j}{ n \choose j}.$$
We now use the fact that $C_T E =  \cup_{\alpha\in \N} C_T E_\alpha$ by \cite[Remark B.1]{issoglio_russoMK}, where the space $C_T E_\alpha$ can be equipped with the norm $\sup_t \|f(t)\|_{E_\alpha}$. 
By this fact, there exists $\alpha$ such that $f\in C_T E_\alpha$, in particular $f_n \in C_T E_\alpha$ for all $n$. 
By Lemma \ref{lm:BernsteinBanach} $f_n$ converges to $f$ in $C_T E_\alpha$, which by the fact stated above implies it converges also  in $C_T E$. We have thus reduced our problem to polynomials of the form $\sum_{j=1}^n a_j t^j$ with $a_j \in E_\alpha$.
We conclude the proof  by using the fact that  $E$ is separable,  thus there exists a countable dense subset of $E$, say $\mathcal P$, so that every polynomial $\sum_{j=1}^n a_j t^j$  can be approached by a sequence of polynomials of the  type $\sum_{j=1}^n q_j t^j$  with $q_j \in \mathcal P$.
\end{proof}

\begin{lemma}\label{lm:Cbetasep}
\begin{itemize}
\item[(i)]    For any $\gamma\in \R$ the space  $\bar{\mathcal C}_c^{\gamma+} $ is separable.
\item[(ii)]   $\bar C_c  $   is separable.
\end{itemize}
\end{lemma}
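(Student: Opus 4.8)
The plan is to derive both separability statements from the density of the Schwartz space proved in Lemma \ref{lm:dense}, combined with the (standard) separability of $\mathcal S$ itself. Recall that $\mathcal S$ is a separable Fr\'echet space: a countable dense subset for the Schwartz topology is given, for instance, by the finite linear combinations with rational coefficients of the functions $x^k e^{-|x|^2/2}$, $k\in\N^d$ (equivalently, of the Hermite functions). Fix once and for all such a countable set $\mathcal D\subset\mathcal S$. I claim that the \emph{same} $\mathcal D$ is dense in $\bar{\mathcal C}_c^{\gamma+}$ and in $\bar C_c$, which yields items (i) and (ii) respectively.

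For item (i), let $f\in\bar{\mathcal C}_c^{\gamma+}$. Following the proof of Lemma \ref{lm:dense}(i), the approximation of $f$ by Schwartz functions can in fact be carried out inside a single Banach step: there are $\gamma'\in(\gamma,\infty)$ and a sequence $(\varphi_k)_k\subset\mathcal S$ with $\varphi_k\to f$ in the $\mathcal C^{\gamma'}$-norm (this is exactly how \eqref{eq:Pt-I} is used there, the mollifications $P_\eps f$ converging in a fixed $\mathcal C^{\gamma'}$). Note that $f\in\bar{\mathcal C}_c^{\gamma'}$ and, since the proof of Lemma \ref{lm:SCgamma} actually gives $\mathcal S\subset\bar{\mathcal C}_c^{\alpha}$ for every $\alpha$, also $\mathcal D\subset\bar{\mathcal C}_c^{\gamma'}$. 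Now each $\varphi_k$ can be approximated arbitrarily well in $\mathcal C^{\gamma'}$-norm by elements of $\mathcal D$, because the $\mathcal C^{\gamma'}$-norm is dominated by finitely many Schwartz seminorms (so Schwartz-convergence implies $\mathcal C^{\gamma'}$-convergence) and $\mathcal D$ is Schwartz-dense. A diagonal choice $\phi_k\in\mathcal D$ with $\|\varphi_k-\phi_k\|_{\gamma'}<1/k$ then gives $\phi_k\to f$ in $\mathcal C^{\gamma'}$, hence in $\bar{\mathcal C}_c^{\gamma'}\subset\bar{\mathcal C}_c^{\gamma+}$ (the inclusion of the step into the inductive limit being continuous). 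Thus $f\in\overline{\mathcal D}$, so $\mathcal D$ is dense and $\bar{\mathcal C}_c^{\gamma+}$ is separable.

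Item (ii) is entirely analogous, with the $\mathcal C^{\gamma'}$-norm replaced by the sup-norm: by Lemma \ref{lm:dense}(ii) every $f\in\bar C_c$ is a uniform limit of Schwartz functions, and each Schwartz function is a uniform limit of elements of $\mathcal D$ (the sup-norm being one of the Schwartz seminorms, with $\mathcal S\subset\bar C_c$ by Lemma \ref{lm:SCgamma}). Since $\bar C_c$ is a Banach space, a countable dense subset is exactly separability.

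The only genuinely delicate point is that $\bar{\mathcal C}_c^{\gamma+}$ carries the non-metrizable inductive-limit topology, so one cannot argue with a single norm throughout. The device to bypass this is precisely the reduction of every approximation to a fixed Banach step $\bar{\mathcal C}_c^{\gamma'}$, using the reduction of statements about $\bar{\mathcal C}_c^{\gamma+}$ to a single $\bar{\mathcal C}_c^{\alpha}$ employed elsewhere in the paper (see \cite[Remark B.1]{issoglio_russoMK}); everything else is a routine transitivity-of-density argument. The other input to keep in mind is the separability of $\mathcal S$, which is taken as standard.
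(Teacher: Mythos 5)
Your proof is correct and follows essentially the same route as the paper, whose entire proof of Lemma \ref{lm:Cbetasep} is the one-line observation that it follows from the density of $\mathcal S$ established in Lemma \ref{lm:dense} (separability of $\mathcal S$ being taken as standard). You have merely made explicit the routine details the paper leaves implicit --- the choice of a countable Schwartz-dense set, the domination of the $\mathcal C^{\gamma'}$-norm by finitely many Schwartz seminorms, and the reduction of the inductive-limit topology to a single Banach step --- all of which are sound.
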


\begin{proof}
 This follows from Lemma \ref{lm:dense}.
\end{proof}

\begin{corollary}\label{cor:CTCbetasep}
The space $C_T \bar{\mathcal C}^{\gamma+}_c $ is separable for any $\gamma\in\R$.
\end{corollary}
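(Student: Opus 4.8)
The plan is to obtain the corollary as an immediate consequence of the two preceding lemmata: Lemma \ref{lm:CTEsep}, which lifts separability from an inductive space $E$ to $C_T E$, and Lemma \ref{lm:Cbetasep}(i), which asserts that $\bar{\mathcal C}_c^{\gamma+}$ is itself separable. Thus I would apply Lemma \ref{lm:CTEsep} with $E = \bar{\mathcal C}_c^{\gamma+}$. The only thing that genuinely needs verifying is that $\bar{\mathcal C}_c^{\gamma+}$ really satisfies the hypotheses of Lemma \ref{lm:CTEsep}, namely that it is an inductive space of the form $\cup_{\alpha\in\N} E_\alpha$ with each $E_\alpha$ a Banach space.

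First I would rewrite $\bar{\mathcal C}_c^{\gamma+}$ as a \emph{countable} increasing union of Banach spaces. By definition $\bar{\mathcal C}_c^{\gamma+} = \cup_{\alpha>\gamma}\bar{\mathcal C}_c^{\alpha}$, which is a priori an uncountable union. However, recalling that $\mathcal C^{\alpha}\subset\mathcal C^{\alpha'}$ whenever $\alpha'<\alpha$, the spaces $\bar{\mathcal C}_c^{\alpha}$ shrink as $\alpha$ grows, so the countable cofinal family $E_n := \bar{\mathcal C}_c^{\gamma+1/n}$, $n\in\N$, already exhausts the union: indeed any $f\in\bar{\mathcal C}_c^{\alpha}$ with $\alpha>\gamma$ lies in $E_n$ as soon as $\gamma+1/n<\alpha$, whence $\cup_{n}E_n=\cup_{\alpha>\gamma}\bar{\mathcal C}_c^{\alpha}$. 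Moreover each $E_n$ is a Banach space, being by construction the closure of $\mathcal C_c^{\gamma+1/n}$ inside the Banach space $\mathcal C^{\gamma+1/n}$, hence a closed (thus complete) subspace.

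With the identification $E=\cup_{n\in\N}E_n$ in place, Lemma \ref{lm:Cbetasep}(i) guarantees that $E=\bar{\mathcal C}_c^{\gamma+}$ is separable, and Lemma \ref{lm:CTEsep} then delivers the separability of
\[
C_T E = C([0,T];\bar{\mathcal C}_c^{\gamma+}) = C_T \bar{\mathcal C}_c^{\gamma+},
\]
which is exactly the assertion. I do not anticipate any real difficulty here, since the statement is a direct combination of the two lemmata; the only mildly delicate point is the reduction from the uncountable index set $\{\alpha>\gamma\}$ to a countable cofinal sequence, which is taken care of by the monotonicity of the scale $\bar{\mathcal C}_c^{\alpha}$ in the regularity index $\alpha$.
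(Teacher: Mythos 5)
Your proof is correct and follows exactly the paper's own route: the corollary is obtained by combining Lemma \ref{lm:Cbetasep}(i) with Lemma \ref{lm:CTEsep} applied to $E=\bar{\mathcal C}_c^{\gamma+}$. Your explicit reduction of the uncountable union $\cup_{\alpha>\gamma}\bar{\mathcal C}_c^{\alpha}$ to the countable cofinal family $E_n=\bar{\mathcal C}_c^{\gamma+1/n}$ (each Banach as a closed subspace of $\mathcal C^{\gamma+1/n}$) is a detail the paper leaves implicit, and it correctly verifies the hypothesis of Lemma \ref{lm:CTEsep}.
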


\begin{proof}
Notice that by definition $\bar{\mathcal C}^{\gamma}_c $ is a Banach space and the inductive space $\bar{\mathcal C}^{\gamma+}_c $ is separable by Lemma  \ref{lm:Cbetasep}, so we conclude  using Lemma \ref{lm:CTEsep}. 
\end{proof}

\bibliographystyle{plain}

\bibliography{../../../../BIBLIO_FILE/biblio}

\end{document}